\newtheorem{lemma}{Lemma}[section]
\newtheorem{prop}[lemma]{Proposition}
\newtheorem{thm}[lemma]{Theorem}
\newtheorem{cor}[lemma]{Corollary}
\newtheorem*{theorem*}{Theorem}
\newtheorem{introthm}{Theorem}
\theoremstyle{definition}
\newtheorem{defn}[lemma]{Definition}
\theoremstyle{remark}
\newtheorem{rem}[lemma]{Remark}
\newtheorem{ex}[lemma]{Example}
\newtheorem{constr}[lemma]{Construction}
\newcommand{\Spec}{\mathrm{Spec}}
\newcommand{\Ext}{\mathrm{Ext}}
\newcommand{\Mor}{\mathrm{Mor}}
\newcommand{\Ker}{\mathrm{Ker}}
\newcommand{\isom}{\cong}
\newcommand{\id}{\mathrm{id}}
\newcommand{\ohne}{\smallsetminus}
\newcommand{\tensor}{\otimes}
\newcommand{\Ah}{\mathcal{A}}
\newcommand{\Oh}{\mathcal{O}}
\newcommand{\Fh}{\mathcal{F}}
\newcommand{\Ih}{\mathcal{I}}
\newcommand{\Q}{\mathbb{Q}}
\newcommand{\Z}{\mathbb{Z}}
\newcommand{\C}{\mathbb{C}}
\newcommand{\A}{\mathbb{A}}
\newcommand{\uOmega}{\underline{\Omega}}
\newcommand{\Sch}{\mathrm{Sch}}
\newcommand{\Var}{\mathrm{Var}}
\newcommand{\Sm}{\mathrm{Sm}}
\newcommand{\Zar}{\mathrm{Zar}}
\newcommand{\et}{\mathrm{et}}
\newcommand{\eh}{\mathrm{eh}}
\newcommand{\cdh}{\mathrm{cdh}}
\newcommand{\h}{\mathrm{h}}
\newcommand{\qfh}{\mathrm{qfh}}
\newcommand{\red}{\mathrm{red}}
\newcommand{\reg}{\mathrm{reg}}
\newcommand{\sn}{\mathrm{sn}}
\newcommand{\dR}{\mathrm{dR}}
\newcommand{\sing}{\mathrm{sing}}
\newcommand{\an}{\mathrm{an}}
\newcommand{\Sh}{\mathrm{Sh}}
\newcommand{\torsion}{\text{torsion}}
\newcommand{\basehtop}[1]{{\h/#1}}
\newcommand{\Xhtop}{\basehtop{X}}
\newcommand{\Yhtop}{\basehtop{Y}}
\newcommand{\Zhtop}{\basehtop{Z}}
\begin{document}

\title{Differential forms  in the $\h$-topology}
\author{Annette Huber}
\email{annette.huber@math.uni-freiburg.de}
\address{Math. Institut, Universit\"at Freiburg, Eckerstr. 1, 79104 Freiburg, Germany.
}
\author{Clemens J\"order}
\email{c.joerder@web.de}
\address{Math. Institut, Universit\"at Freiburg, Eckerstr. 1, 79104 Freiburg, Germany.
}
\classification{14F10 (primary), 14F05, 14J17, 14F40(secondary)}
\keywords{differential forms, $h$-topology, $klt$-spaces, Du Bois complex, de Rham cohomology}
\thanks{The second author gratefully acknowledges support by the DFG-Forschergruppe 790
  ``Classification of Algebraic Surfaces and Compact Complex Manifolds'' and the Graduiertenkolleg 1821 ``Cohomological Methods in Geometry''.}

\begin{abstract}
We study sheaves of differential forms and their cohomology in the $\h$-topology.
This allows one to extend standard results from the case of smooth varieties to the general case. As a first application we explain the case of singularities arising in the Minimal Model Program. As a second application we consider de Rham cohomology.
\end{abstract}
\maketitle

\setcounter{tocdepth}{1}

\section*{Introduction}
The aim of this note is to propose a new extension of the theory of differential forms to the case of singular varieties in characteristic zero and to illustrate that it has very good properties; unifying  a number of ad hoc approaches and allowing a more conceptual understanding of results in the literature.

Differential forms play a key role in the study of local and global properties of manifolds and non-singular algebraic varieties. This principle is confirmed for example by the period isomorphism between algebraic de Rham cohomology and singular cohomology, or the classification of singularities arising from the Minimal Model Program in terms of extension properties of differential forms of top degree.

It is well-known that the theory of K\"ahler differentials is not well-behaved in the singular case. 
Various competing generalizations were introduced. In any case the definition can be traced back to the non-singular case:
\begin{itemize}
  \item \emph{K\"ahler differential forms} $\Omega^p_X$ on $X$ and their torsion-free counterpart $\Omega^p_X/\text{tor}$ are obtained as quotients of $\Omega^p_Y$ for an ambient non-singular space $Y\supset X$.
  \item \emph{Differential forms of first kind} on an irreducible variety $X$ are differential forms on a log resolution $Y\to X$ (see \cite{SvS85} (1.2)).
  \item \emph{Reflexive differential forms} $\Omega^{[p]}_X$  on a normal variety $X$ are differential forms on the regular locus $X^\reg$ (see \cite{Kni73}, \cite{LW09}, \cite{GKKP11})
\item Using simplicial hyperresolutions Du Bois \cite{DuB81} defines complexes of coherent sheaves and in this way ''localizes Hodge theory''.
\end{itemize}
The purpose of this paper is to introduce a new competitor to this field: \emph{$\h$-dif\-fe\-ren\-tial  forms} $\Omega^p_\h$. We give three characterizations of very different flavor:
\begin{enumerate}
\item
They are the outcome of the sheafification of K\"ahler differential forms with respect to the $\h$-topology on the category of schemes introduced by Voevodsky in \cite{Voe96} (see Definition~\ref{defn_h}).
\item  They have a simple characterization in birational geometry: Given a variety $X$ we choose arbitrary resolutions $X'\to X$ and $\phi:X''\to X'\times_X X'$. Then pulling back yields a bijection between the set of $\h$-differential forms on $X$ and the set of K\"ahler differential forms on the resolution $X'$ such that the two pullbacks to $X''$ coincide. In other words (see Remark \ref{rem_simple-formula}),
\[\Omega^p_\h(X)\,\cong\, \{\alpha\in\Omega^p_X(X')|\, (\text{pr}_1\circ\phi)^*\alpha = (\text{pr}_2\circ\phi)^*\alpha \} .\]
\item To give an $\h$-differential form on a variety $X$ is equivalent to give, in a compatible way, for any morphism $Y\to X$ from a non-singular variety to $X$ a K\"ahler differential form on $Y$. More precisely:
\end{enumerate}

\begin{introthm}[(Section \ref{sec_h})]\label{thm1}
Let $k$ be a field of characteristic zero and $X$ a separated scheme of finite type over $k$. Then
\[ \Omega^p_\h(X) \,\cong\, \left\{ (\alpha_f)_{f:Y\to X}\in\prod_{f:Y\to X\atop \text{$Y$ smooth}}\Omega^p_Y(Y)  \quad    \Big|\quad \begin{array}{l}\xymatrix{Y' \ar[rd]^{f'} \ar[d]_\phi & \\ Y \ar[r]_f & X }\end{array}\Rightarrow \phi^*\alpha_{f}=\alpha_{f'}\right\}. \]
\end{introthm}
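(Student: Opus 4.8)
The plan is to factor the statement through two essentially independent facts: that $h$-sheafification leaves $\Omega^p$ unchanged on smooth schemes, and that an $h$-sheaf is reconstructed from its values on the smooth schemes over $X$. Write $G(X)$ for the right-hand side, i.e.\ the limit of the functor $(f\colon Y\to X)\mapsto\Omega^p_Y(Y)$ over the category $\Sm\downarrow X$ of smooth schemes over $X$, with pullback of K\"ahler forms as transition maps. Sending a section $s$ to the family $(f^*s)_f$ gives a natural map $\Omega^p_\h\to G$; the compatibility $\phi^*\alpha_f=\alpha_{f'}$ is automatic from functoriality of pullback, so everything reduces to showing this map is bijective on $X$. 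As a consistency check, when $X$ is itself smooth the pair $(X,\id)$ is terminal in $\Sm\downarrow X$, so $G(X)$ collapses to $\Omega^p_X(X)$ and the theorem in this case is exactly the assertion $\Omega^p_\h(X)\isom\Omega^p_X(X)$.

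\emph{Smooth case (the main obstacle).} I would first establish $\Omega^p_\h(Y)\isom\Omega^p_Y(Y)$ for $Y$ smooth. The \'etale part of $h$-descent is classical, since $\Omega^p$ is quasi-coherent; the real content is descent along proper surjections and abstract blow-ups. Concretely one must show that for a resolution-type cover $\tilde Y\to Y$ of a smooth $Y$, a K\"ahler form on $\tilde Y$ whose two pullbacks to the relevant fibre product agree already descends to $Y$. In characteristic zero this rests on resolution of singularities together with the good behaviour of $\Omega^p$ under blow-ups with smooth centres, and amounts to the statement that on smooth schemes $\Omega^p$ is already a sheaf for the $\cdh$- (hence, together with \'etale descent, the $h$-) topology. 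This is where the geometry lives and I expect it to be the hardest step.

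\emph{Reduction to the smooth case.} For an arbitrary $h$-sheaf $F$ and any $X$ I would then prove $F(X)\isom\lim_{Y\to X \text{ smooth}}F(Y)$. Injectivity follows from the separation axiom applied to a single smooth $h$-cover $Y_0\to X$, which exists because a resolution of $X$ is a proper surjection from a smooth scheme and hence an $h$-cover. For surjectivity I take such a cover and try to glue $\alpha_\pi\in F(Y_0)$; the gluing datum lives on $Y_0\times_X Y_0$, which need not be smooth, so I resolve it by a smooth $h$-cover $W\to Y_0\times_X Y_0$. The two composites $W\rightrightarrows Y_0\to X$ are objects of $\Sm\downarrow X$, so compatibility of the given family forces the two pullbacks of $\alpha_\pi$ to agree on $W$; the $h$-sheaf axiom for $F$ then produces the desired section, and independence of the chosen resolutions is routine cofinality bookkeeping.

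Combining the two, I apply the reduction to the $h$-sheaf $F=\Omega^p_\h$ to obtain $\Omega^p_\h(X)\isom\lim_{Y\to X \text{ smooth}}\Omega^p_\h(Y)$, and then substitute the smooth case term by term — the transition maps match because $h$-pullback restricts to K\"ahler pullback on smooth schemes — to identify this limit with $G(X)$, which is the right-hand side. The single geometric input used throughout is characteristic-zero resolution of singularities: to manufacture smooth $h$-covers in the reduction step, and to control proper descent in the smooth case; the rest is formal.
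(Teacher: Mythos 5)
Your overall architecture is the same as the paper's: first prove $\Omega^p_\h(Y)\isom\Omega^p(Y)$ for smooth $Y$ (Theorem \ref{thm_hdescent}), then recover $\Omega^p_\h(X)$ as the equalizer attached to a smooth $\h$-cover $X'\to X$ and a smooth $\h$-cover of $X'\times_XX'$, and finally check that the resulting section pulls back correctly along an arbitrary $f:Y\to X$ by resolving $(Y\times_XX')_\red$ and using injectivity of restriction along the resulting proper surjection onto $Y$ --- this is exactly the proof of Corollary \ref{cor_easy-char}. That reduction step of yours is fine.

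The gap is in the smooth case. You assert that \'etale descent together with $\cdh$-descent (abstract blow-ups, blow-ups in smooth centres, resolution-type proper birational covers) yields $\h$-descent. It does not: the topology generated by \'etale covers and abstract blow-ups is the $\eh$-topology, and the $\h$-topology is strictly finer. The difference consists precisely of finite surjective morphisms that are generically non-trivial --- e.g.\ $x\mapsto x^2:\A^1\to\A^1$, or the quotient map $X\to X/G$ for a finite group $G$ acting on a smooth $X$ --- which are flat (hence $\h$-) covers but admit no refinement by $\eh$-covers, since over the generic point the residue field genuinely extends. The sheaf condition for $\Omega^p$ along such covers is an additional, non-formal input; the paper isolates it as Galois descent for K\"ahler differentials (Lemma \ref{lemma_Gdescent}), which rests on the fact that in characteristic zero all field extensions are separable, so that $G$-invariant forms on a Galois cover descend. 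One also needs Voevodsky's normal-form theorem (Proposition \ref{cor_normal-h}) to know that an arbitrary $\h$-cover --- defined via universal topological epimorphisms, not by generators --- refines to a composition (Zariski open cover) $\circ$ (finite surjective) $\circ$ (proper $\eh$-cover with smooth source), so that checking these special cases suffices. With those two ingredients added, your argument closes; without them the ``smooth case'' establishes only $\Omega^p_\eh(Y)=\Omega^p(Y)$, which is Geisser's theorem (Proposition \ref{prop_smooth}) and strictly weaker than what Theorem \ref{thm1} needs.
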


Let us give three reasons why one should consider the $\h$-topology. First, any scheme is $\h$-locally smooth by Hironaka's theorem, an obvious technical advantage.

Second, K\"ahler differential forms on non-singular schemes turn out to satisfy $\h$-descent (see Theorem \ref{thm_hdescent}). A variant of this result  was first shown by Lee  in \cite{Lee09}. An analogous statement has already been observed in the case of the coarser $\eh$-topology by Geisser in \cite{Gei06} and in the even coarser $\cdh$-topology by Corti\~nas, Haesemeyer, Walker and Weibel in \cite{CHWaWei11}.

Third, in contrast to the $\cdh$ or $\eh$-topology, all proper surjective morphisms and all flat covers are $\h$-covers. Recall that proper covers in the context of de Rham cohomology were introduced long ago by Deligne in \cite{HodgeIII} in order to extend the period isomorphism to the singular case. We would like to emphasize the flexibility gained by using arbitrary $\h$-covers. In many cases, technical difficulties disappear thanks to the machinery.

These technical advantages allow us to prove invariance of $\h$-differentials for maps with rationally chain connected fibers (Theorem \ref{thm_rcc}). Together with the extension theorem of Greb, Kebekus, Kov\'acs and Peternell in \cite{GKKP11}, this implies the following result for varieties whose singularities arise in the Minimal Model Program.

\begin{introthm}[(Section \ref{sec:5})]\label{thm2}
On a klt base space: $\Omega^p_\h(X)=\Omega^{[p]}_X(X)$.
\end{introthm}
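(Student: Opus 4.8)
The plan is to factor the comparison through a resolution of singularities and to combine two inputs: the invariance of $\h$-differentials under morphisms with rationally chain connected fibres (Theorem~\ref{thm_rcc}) and the extension theorem of Greb--Kebekus--Kov\'acs--Peternell \cite{GKKP11}. Since a klt space is in particular normal, we have $\Omega^{[p]}_X(X)=\Omega^p_X(X^\reg)$, the $p$-forms on the smooth locus. Fix a resolution $\pi\colon X'\to X$ that is an isomorphism over $X^\reg$. The characterization in Theorem~\ref{thm1} produces an evident restriction map $\Omega^p_\h(X)\to\Omega^{[p]}_X(X)$, namely evaluation of a compatible family on the open immersion $X^\reg\hookrightarrow X$. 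The goal is to show this map is bijective, and I would do so by realizing it as the composite of three isomorphisms
\[ \Omega^p_\h(X)\;\xrightarrow{\ \pi^*\ }\;\Omega^p_\h(X')\;=\;\Omega^p_{X'}(X')\;\xrightarrow{\ \mathrm{res}\ }\;\Omega^{[p]}_X(X). \]

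The middle identification is the smooth case: because $X'$ is smooth, $\h$-descent for K\"ahler forms (Theorem~\ref{thm_hdescent}) gives $\Omega^p_\h(X')\cong\Omega^p_{X'}(X')$. The first arrow uses that a resolution of a klt space has rationally chain connected fibres (by the theorem of Hacon--McKernan, a consequence of klt singularities being rational and rationally chain connected); Theorem~\ref{thm_rcc} then shows that $\pi^*$ is an isomorphism. This is precisely the step where the flexibility of the $\h$-topology pays off: $\h$-differentials do not see the contracted rationally chain connected fibres.

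For the last arrow, restriction to $\pi^{-1}(X^\reg)\cong X^\reg$ is injective because $X'$ is integral and $\pi^{-1}(X^\reg)$ is dense, so a regular form is determined by its restriction there. Surjectivity is exactly the content of the extension theorem of \cite{GKKP11}: every reflexive form on a klt space extends to a genuine K\"ahler form on any resolution. Hence restriction is an isomorphism $\Omega^p_{X'}(X')\xrightarrow{\sim}\Omega^{[p]}_X(X)$. Composing the three isomorphisms and checking that the composite agrees with the natural restriction map is routine, since every map is given by pullback or restriction and all agree after restricting to the dense open $X^\reg$ over which $\pi$ is an isomorphism.

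I expect the substantial input to be entirely external: the extension theorem of \cite{GKKP11} in the third step, together with the geometric fact (feeding Theorem~\ref{thm_rcc}) that klt resolutions have rationally chain connected fibres in the second step. The structure of the argument is otherwise formal. The main obstacle is therefore not the assembly of the isomorphisms but invoking the deep extension result correctly and verifying that its standing hypotheses---klt and characteristic zero---match the hypotheses in force here, and that the rationally chain connected fibre condition genuinely holds for the chosen resolution.
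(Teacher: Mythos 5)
Your proposal is correct and follows essentially the same route as the paper: the paper's proof of Theorem~\ref{thm_klt-sing} likewise fixes a resolution $\pi:\tilde X\to X$, quotes the main result of \cite{GKKP11} for $\Omega^{[p]}(X)=\Omega^p(\tilde X)$, and uses \cite{HM07} Corollary 1.5 together with Theorem~\ref{thm_rcc} to get $\Omega^p_\h(X)=\Omega^p_\h(\tilde X)=\Omega^p(\tilde X)$. The only difference is presentational: you spell out the compatibility of the composite with the natural restriction map, which the paper leaves implicit.
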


The special case of normal toric varieties has already been proved in \cite{CHSWaWei00} Theorem 4.1, see Remark~\ref{rem_toric} for more details. By Theorem~2 we obtain a more conceptual explanation  for the existence of pull-back maps for reflexive differential forms on klt base spaces (see Corollary~\ref{cor-keb-pullback}). This is the main result in \cite{Keb12}. On the other hand, \cite{Keb12} and Theorem 1 together imply Theorem 2.

Let us now turn to the study of cohomology of $\h$-differentials. The natural notion is cohomology of $\h$-sheaves. This does not change anything in the smooth case:

\begin{introthm}[(Section \ref{sec_coho})]\label{thm3}
On a smooth variety: $H^i_\h(X,\Omega^p_\h)=H^i_\Zar(X, \Omega^p_X)$.
\end{introthm}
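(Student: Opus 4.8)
The plan is to prove that for a smooth variety $X$, the $\h$-cohomology of the $\h$-sheaf $\Omega^p_\h$ agrees with the Zariski cohomology of the Kähler differentials $\Omega^p_X$. The crucial input is the $\h$-descent result (Theorem~\ref{thm_hdescent}), which asserts that Kähler differentials on smooth schemes satisfy $\h$-descent. My strategy is to compare the computation of $\h$-cohomology against Zariski cohomology by passing through an intermediate site and using that $X$ is already smooth, so that no resolution is needed locally.

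First I would recall that $\h$-cohomology is computed as the cohomology of the derived pushforward along the morphism of sites $\epsilon\colon X_\h \to X_\Zar$ (or directly via the change-of-topology spectral sequence). Concretely, there is a natural morphism $a^* \Omega^p_X \to \Omega^p_\h$ from the $\h$-sheafification, and since $X$ is smooth, the restriction of $\Omega^p_\h$ to the small Zariski site of $X$ recovers $\Omega^p_X$; this is essentially the content of the characterization in Theorem~\ref{thm1} restricted to open immersions into the smooth $X$. The Leray spectral sequence for $\epsilon$ then reads
\[
H^i_\Zar\bigl(X, R^j\epsilon_* \Omega^p_\h\bigr) \;\Longrightarrow\; H^{i+j}_\h\bigl(X, \Omega^p_\h\bigr).
\]
Thus the whole statement reduces to identifying the higher direct images: I must show that $\epsilon_* \Omega^p_\h \cong \Omega^p_X$ (the $j=0$ term) and that $R^j\epsilon_* \Omega^p_\h = 0$ for $j > 0$.

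The vanishing of the higher direct images is the heart of the matter, and this is exactly where $\h$-descent enters. Since $R^j\epsilon_*\Omega^p_\h$ is the Zariski sheaf associated to $U \mapsto H^j_\h(U, \Omega^p_\h)$ for $U$ ranging over open subsets of $X$, it suffices to show that $H^j_\h(U, \Omega^p_\h) = 0$ for $j>0$ when $U$ is a smooth affine variety, or more precisely to show this vanishing Zariski-locally. Here I would invoke Theorem~\ref{thm_hdescent}: $\h$-descent for $\Omega^p$ on smooth schemes means precisely that the $\h$-cohomology of $\Omega^p_\h$ on a smooth scheme is computed by (is quasi-isomorphic to) the Zariski--Kähler complex, forcing the higher cohomology to be concentrated appropriately. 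The descent statement says that the natural map in the derived category from $\Omega^p_X$ (viewed as a Zariski complex) to $R\epsilon_* \Omega^p_\h$ is an isomorphism on the smooth site, which immediately yields both $\epsilon_*\Omega^p_\h = \Omega^p_X$ and the vanishing of $R^j\epsilon_*$ for $j>0$.

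The main obstacle I anticipate is making the reduction to $\h$-descent genuinely local and rigorous: one must verify that $\h$-descent, which is naturally a statement about the hypercohomology of the $\h$-sheafification over \emph{smooth} test schemes, can be applied to the sheaf $\epsilon_*$ computation, i.e.\ that the stalk/sheafification arguments commute correctly and that the identification of $R\epsilon_*\Omega^p_\h$ with the Kähler complex is compatible with the Leray spectral sequence. A secondary technical point is confirming that $\epsilon_* \Omega^p_\h$ really equals $\Omega^p_X$ rather than some subsheaf such as $\Omega^p_X/\mathrm{tor}$; on a smooth variety these coincide, so this is harmless, but the identification should be stated carefully using the characterization of $\Omega^p_\h$ from Theorem~\ref{thm1} applied to open immersions $U \hookrightarrow X$. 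Once the descent theorem is in hand, the argument collapses the spectral sequence to an edge isomorphism, and the result follows.
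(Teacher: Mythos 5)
There is a genuine gap at the heart of your argument. Your framework (Leray spectral sequence for $\epsilon\colon X_\h\to X_\Zar$, reduction to $\epsilon_*\Omega^p_\h=\Omega^p_X$ and $R^j\epsilon_*\Omega^p_\h=0$ for $j>0$) is a correct reformulation of the statement, but the step where you claim these follow from Theorem~\ref{thm_hdescent} is circular. Theorem~\ref{thm_hdescent} is purely an $H^0$-level statement: it says that $\Omega^p_\eh$ satisfies the sheaf condition for $\h$-covers, equivalently that $\Omega^p_\h(U)=\Omega^p(U)$ for smooth $U$. It does \emph{not} say that $\Omega^p_X\to R\epsilon_*\Omega^p_\h$ is a quasi-isomorphism; that derived statement is Proposition~\ref{prop_smoothdb}, which the paper explicitly derives \emph{from} Corollary~\ref{cor_ehcohom} --- i.e.\ from the theorem you are trying to prove. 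A presheaf can satisfy the sheaf condition in a fine topology and still have higher cohomology there that differs wildly from its Zariski cohomology; sheaf-condition descent gives no control over $H^j$ for $j>0$, where the many extra $\h$-covers (proper surjections, finite covers) could a priori contribute.

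The actual content of the paper's proof consists of two inputs that your proposal never engages with. First, Proposition~\ref{prop_cohoehh}: for any $\h$-sheaf of $\Q$-vector spaces, $H^i_\h(X,\Fh)=H^i_\eh(X,\Fh)$. This is where the real work against the extra $\h$-covers happens; it uses the normal form of $\h$-covers (Proposition~\ref{cor_normal-h}) to factor through a quasi-finite part, and then Voevodsky's theorem that $\qfh$-cohomology agrees with \'etale cohomology for $\Q$-sheaves on normal schemes. Second, Geisser's Theorem~4.7, which identifies $H^i_\eh(X,\Omega^p_\eh)$ with $H^i_\Zar(X,\Omega^p_X)$ for smooth $X$; this is itself a nontrivial computation via blow-up squares and \'etale descent, not a formal consequence of the $H^0$ statement. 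To repair your proof you would need to supply substitutes for both of these; as written, the vanishing of $R^j\epsilon_*\Omega^p_\h$ for $j>0$ is asserted rather than proved.
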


The main computational tool, the blow-up sequence, allows one easily to compute cohomology of singular varieties from the smooth case. 

Subsequently we will analyze in more detail the relation between the $\h$-topology on the category of schemes over a scheme $X$ and the Zariski topology on $X$. This is useful in two ways: 

First, as already realized by Lee (\cite{Lee09}), the new point of view offers a new perspective on the Du Bois complex of a variety. It turns out to be the derived push-forward of the $\h$-differential forms considered as a complex of sheaves in the $\h$-topology (Theorem \ref{thm_h-is-dub}). 
To illustrate the advantage of our language we deduce subsequently a number of well-known properties of the Du Bois complex. 
In contrast to \cite{DuB81} and \cite{GNPP88},
we avoid any use of simplicial or cubic hyperresolutions in the construction.

Analogous results in terms of the $\cdh$-topology were also shown by Corti\~nas, Haesemeyer, Schlichting, Walker and Weibel as a byproduct of their work on homotopy invariance of algebraic $K$-theory, see their series of papers \cite{CHSW08}, \cite{CHWei08}, \cite{CHSWaWei00}, \cite{CHWaWei10}, \cite{CHWaWei11}, \cite{CHWaWei13}. 

This leads to the last application. Hypercohomology of the complex of $\h$-sheaves $\Omega^p_\h$ gives a simple definition of {\em algebraic de Rham cohomology}. It agrees with the other definitions in the literature. 
The question to what extend the analytified sheaves of $\h$-differential forms resolve the sheaf of locally constant functions is treated in the dissertation \cite{Jor14} of the second author. 

As an application of our techniqeus, we use the machinery of $\h$-differential forms to construct the relative de Rham cohomology associated with a closed subscheme $Z\subset X$. 
It is the counterpart of relative singular 
cohomology and needed in the study of the period isomorphism and the period numbers of a general variety, e.g. in
the work of Kontsevich and Zagier on periods \cite{KoZ02}, see also \cite{HubMSt11}.  Its existence and properties have been known to experts, but
we are not aware of a good reference. 
The obvious definition via the shift of a reduced cone of hyperresolutions
makes establishing the properties very difficult. It is not even clear to us how
to write down the K\"unneth morphism.
One possible approach is by using Saito's theory of Hodge modules, another by a systematic use of the de Rham realization
of triangulated motives (see \cite{Hreal} and \cite{Hreal2}, or \cite{LW09}).
Using $\h$-differentials we are able to write down a simple definition
and give straightforward algebraic proofs for basic features such as long exact sequences associated with triples, excision and the K\"unneth formula (Subsection \ref{ssec_rel-de-rham}). 

Beilinson in his approach to $p$-adic Hodge theory (see \cite{B12}) also 
uses the
$h$-topology in order to study the de Rham complex. His construction is a lot 
more subtle. He is working over the ring of integers of a local field of mixed characteristic, i.e., with integral (or rather $p$-adic) coefficients. Moreover, 
his $\mathcal{A}_\dR$ is a projective system of complexes of $\h$-sheaves which is built on Illusie's cotangent complex instead of the cotangent space. We have not tried to work out the explicit relation to $\Omega^*_\h$ on the generic fibre. 

Using the $\h$-topology in the context of de Rham cohomology is quite natural: the de Rham complex is a homotopy invariant complex of sheaves of 
$\Q$-vector spaces with transfers.
From the general motivic machinery we learn that its cohomology can be equivalently treated in the $\h$-topology without transfers. One point we want to demonstrate in our paper is that the $\h$-sheafification of the individual $\Omega^p$ is also very useful. 

The present paper concentrates on differential forms rather than developing a 
full-blown six functor formalism for $\Oh$-modules in the $\h$-topology. 
We have refrained from dealing with differential forms with log poles or with twists by line bundles. What is also missing is
a discussion of Grothendieck duality where differential forms also play a key role. We work in characteristic zero throughout and hope that a modified definition would also work in positive characteristic.
We hope that these aspects will be developed in the future.
\subsection*{Outline of the paper}
The paper consists of two parts. Section~\ref{sec:2} through \ref{sec:5} discuss sheaves of differential forms in various topologies. Sections~\ref{sec_coho} and~\ref{sec:7} deal with cohomology groups and objects in the derived category.

The goal of sections \ref{sec:2} and \ref{sec_h} is to establish $\h$-descent for differential forms. Section~\ref{sec:2} recalls the results of Geisser \cite{Gei06} in his $\eh$-topology 
and establishes additional properties of differentials forms in the $\eh$-topology. Section \ref{sec_h} reviews Voevodsky's $\h$-topology and deduces Theorem \ref{thm1} mentioned above.

In Section \ref{sec:4}, we make the definition of $\Omega^p_\h(X)$ explicit in a number of
cases: for $p=0$, $p=\dim X$ or when $X$ has special types of singularities.

Section \ref{sec:5} treats the case of klt-singularities. Theorem \ref{thm2} is deduced from the key result on the invariance of
$\h$-differentials for maps with rationally chain connected fibers.

We then turn in Section \ref{sec_coho} to cohomology of sheaves in the $\h$-topology in general and to cohomology of $\h$-differential forms. 

Finally, in Section \ref{sec:7}, we consider the de Rham complex of $\h$-differential forms. We recover the Du Bois complex in terms of $\h$-differential forms. The section concludes with a simple description of relative algebraic de Rham cohomology in terms of $\h$-differential forms.

\begin{acknowledgements}
We are particularly thankful to Stefan Kebekus who suggested that we should work together on this project. He was the one who asked about the relation to the Du Bois complex. We are very grateful to Matthias Wendt for his careful reading of a first version of our article. We also thank them and our other colleagues at Freiburg
University for comments and explanations: Daniel~Greb, Patrick Graf, and Wolfgang Soergel.
\end{acknowledgements}

\section{Setting and Notation}\label{sec:1}
We fix a field $k$ of characteristic zero. By scheme we mean a separated scheme of finite type over $k$. By variety we mean a reduced separated scheme of finite type over $k$. We denote by $\Sch$, $\Var$ and $\Sm$ the categories of $k$-schemes, $k$-varieties and smooth $k$-varieties, respectively. 

A {\em resolution} of an irreducible variety $X$ is a proper birational morphism $X'\to X$ from a smooth variety $X'$ to $X$. A {\em resolution} of a variety $X$ is a morphism $X'\to X$ where $X'$ is the disjoint union of resolutions of the irreducible components of $X$. 

It $t$ is a Grothendieck topology, we denote by $\Sch_t$, $\Var_t$, $\Sm_t$ the site defined by $t$ and by $(\Sch_t)^\sim$, $(\Var_t)^\sim$ and $(\Sm_t)^\sim$ the topos of sheaves of sets on $\Sch_t$, $\Var_t$ and $\Sm_t$, respectively.
We are going to consider the cases $\Zar$ (Zariski topology), $\et$ (\'etale topology), $\eh$ (\'etale h-topology, see Definition \ref{defn-eh}) and $\h$ (h-topology, see Definition \ref{defn_h}).

If $\Fh$ is a $t$-sheaf of abelian groups in some Grothendieck topology on $\Sch$ and $X\in\Sch$, then we write
\[ H^i_t(X,\Fh)\]
for the $i$-th derived functor of $\Gamma(X,\cdot):$ evaluated on $\Fh$.
\begin{defn}\label{defn_Z(X)}Let $X\in\Sch$, and $t$ some Grothendieck topology on $\Sch$. We write $\Z_t(X)$ for the $t$-sheafification of the presheaf
\[ T\mapsto \Z[X(T)]\]
where $\Z[S]$ denotes the free abelian group generated by $S$.
\end{defn}
Recall that
\[ H^i_t(X,\Fh)=\Ext^i(\Z_t(X),\Fh)\]
for all t-sheaves $\Fh$.

For a $k$-scheme $X$ let $\Omega^1_X$ be the Zariski-sheaf of $k$-linear K\"ahler differentials on $X$. For $p\geq 0$, let
$\Omega^p_X$ be the $p$-th exterior power of $\Omega^1_X$ in the category of $\Oh_X$-modules. 
We denote $\Omega^p$ the sheaf 
\[ X\mapsto \Omega^p_X(X)\]
on the big Zariski-site on $\Sch_k$. 
The usual differential 
\[ d:\Omega^p\to\Omega^{p+1}\]
turns it into a differential graded algebra $\Omega^\bullet$.
If $t$ is another topology on $\Sch$, we denote by $\Omega^\bullet_t$ the
sheafification in the $t$-topology.

We are also going to consider Zariski-differentials, studied e.g. in
\cite{Kni73} and \cite{GKKP11}. We follow the notation of the second reference.

\begin{defn}\label{defn_reflexive}Let $X$ be a normal variety, $j:X^\reg\to X$ be the inclusion of
the regular locus. We call
\[ \Omega^{[p]}_X=j_*\Omega^p_{X^\reg}\]
the {\em sheaf of Zariski differentials on $X$} or {\em sheaf of reflexive differentials}.
\end{defn}


\section{Differential forms in the $\eh$-topology}\label{sec:2}
We review the $\eh$-topology introduced by Geisser in \cite{Gei06}. It is
a twin of the cdh-topology introduced by Voevodsky in \cite{Voe96}. The relation of the $\eh$-topology to the \'etale topology is the same as the relation of the 
Nisnevich topology to the cdh-topology. For our purposes it makes no difference which to use.
We consider differential forms in the $\eh$-topology.

\begin{defn}[(\cite{Gei06} Definition 2.1)]\label{defn-eh}The {\em $\eh$-topology} on the category $\Sch$ of separated schemes of finite type over $k$ is the Grothendieck topology generated by the following coverings:
\begin{itemize}
\item \'etale coverings
\item abstract blow-ups: assume that we have a Cartesian square
\[\begin{CD}
Z'@>>> X'\\
@Vf'VV@VVfV\\
Z@>>> X
\end{CD}\]
where $f$ is proper, $Z\subset X$ a closed subscheme  and $f$ induces an isomorphism
$X'\ohne Z'\to X\ohne Z$; then $(X'\to X, Z\to X)$ is a covering.
\end{itemize}
\end{defn}

\begin{ex}\label{lemma_propereh}
\begin{enumerate}
\item Let $f:X_{\red}\subset X$ be the reduction. Then $f$ is an $\eh$-cover
(with $X'=\emptyset$, $Z=X_\red$). Hence every scheme is $\eh$-locally reduced. 
\item \label{item_2.3.1}
Every proper morphism $X'\to X$ such that for every $x\in X$ there is a point in $p^{-1}(x)$ with the same residue field as $x$ is an eh-covering by \cite{Gei06} Lemma 2.2. A special case is a blow-up of a smooth variety in a smooth center.

\end{enumerate}
\end{ex}

\begin{defn} A covering as in Example \ref{lemma_propereh}  (\ref{item_2.3.1}) is called a {\em proper eh-covering}. 
\end{defn}

\begin{constr}\label{constr_eh-locally-smooth}
By resolution of singularities, every $X\in \Sch$ has a proper $\eh$-cover $\{\tilde{X}_i\to X\}_{i\in I}$ by smooth varieties $\tilde{X}_i$. We spell out the algorithm. 
\begin{enumerate}
\item $X\in\Sch$ has a proper $\eh$-cover $X$ by  $X_\red$.
\item Let $X=\bigcup_{i=1}^n X_i\in\Var$ with $X_i$ irreducible. Then
\[ \left\{ \coprod_{i=1}^nX_i\to X, \bigcup_{i<j}X_i\cap X_j\right\}\]
is an abstract blow-up and hence a proper $\eh$-cover.
\item Let $X$ be an irreducible variety. By resolution of singularities there is
a birational proper map $\pi:\tilde{X}\to X$ with $\tilde{X}$ smooth. Let
$Z\subset X$ be the image in $X$ of the exceptional locus of $\pi$. Then
$\{\tilde{X},Z\}$ is an abstract blow-up and hence a proper $\eh$-cover.
\item Let $Z$ be as in the last step.
By induction on the dimension, there is a proper $\eh$-cover of $Z$
by smooth varieties.
\end{enumerate}
\end{constr}

\begin{defn}For $p\geq 0$, let $\Omega_\eh^p$ be the $\eh$-sheafification of
the presheaf
\[ X\mapsto \Omega^p(X)\]
on the category $\Sch$. We call the elements of $\Omega_\eh^p(X)$ {\em eh-differentials on $X$}.
\end{defn}



\begin{prop}[(\cite{Gei06} Thm. 4.7)]\label{prop_smooth} Let $X$ be a smooth variety. Then
\[ \Omega^p(X)=\Omega^p_\eh(X)\ .\]
\end{prop}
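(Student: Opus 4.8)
The statement to prove is Proposition~\ref{prop_smooth}: for a smooth variety $X$, the natural map $\Omega^p(X) \to \Omega^p_\eh(X)$ is an isomorphism. Here I sketch the proof.

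=== BEGIN PROOF SKETCH ===

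\begin{proof}[Proof sketch]
The plan is to show that the presheaf $X\mapsto \Omega^p(X)$ already satisfies $\eh$-descent when restricted to smooth varieties, so that its $\eh$-sheafification agrees with its values on smooth schemes. Since the $\eh$-topology is generated by \'etale coverings and abstract blow-ups (Definition~\ref{defn-eh}), it suffices to verify the sheaf condition for these two types of covers, and to control the sheafification process carefully on the smooth locus.

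First I would treat the \'etale part. K\"ahler differentials are already an \'etale sheaf on $\Sm$: since $\Omega^p$ is a coherent (indeed locally free) $\Oh_X$-module on a smooth variety, its sections satisfy descent for \'etale coverings by faithfully flat descent for quasi-coherent sheaves. Thus the only new content comes from abstract blow-ups.

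The main step is descent along a proper $\eh$-cover arising from a blow-up. Following Construction~\ref{constr_eh-locally-smooth}, the relevant covers of a smooth $X$ can be taken to be blow-ups along smooth centers (Example~\ref{lemma_propereh}(\ref{item_2.3.1})). So the heart of the argument is: given a blow-up $\pi:\tilde{X}\to X$ of a smooth variety $X$ along a smooth center $Z$, with exceptional divisor $E=\pi^{-1}(Z)$, one must show that a differential form $\alpha\in\Omega^p(X)$ is recovered from its pullback to $\tilde{X}$, and conversely that a form on $\tilde{X}$ whose restriction is compatible on $E$ and $Z$ descends to $X$. Concretely, the blow-up square
\[\begin{CD}
E @>>> \tilde{X}\\
@VVV @VV\pi V\\
Z @>>> X
\end{CD}\]
should induce an exact sequence computing $\Omega^p(X)$ as the equalizer of $\Omega^p(\tilde{X})\rightrightarrows \Omega^p(E)\oplus(\cdots)$. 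The key local computation is that pullback along a smooth blow-up is injective on differential forms and that its image consists exactly of those forms on $\tilde{X}$ that extend across $E$ compatibly, which is a statement one can check in local coordinates where the blow-up is explicit.

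I expect the main obstacle to be the inductive bookkeeping over the steps of Construction~\ref{constr_eh-locally-smooth} together with the passage from ``the presheaf satisfies the sheaf axiom for generating covers'' to ``its sheafification agrees with the presheaf on smooth objects.'' One clean way around this is to cite that the sheafification can be computed by a colimit over refinements of $\eh$-covers, and that by Construction~\ref{constr_eh-locally-smooth} every $\eh$-cover of a smooth $X$ is refined by a composite of an \'etale cover and a sequence of smooth blow-ups. Combining \'etale descent with the smooth-blow-up computation then forces $\Omega^p_\eh(X)=\Omega^p(X)$ in the limit. The genuinely hard input---that differential forms behave well under smooth blow-ups and under resolution---is what ultimately makes the equality hold in characteristic zero, and this is presumably exactly the content extracted from Geisser's Theorem~4.7.
\end{proof}

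=== END PROOF SKETCH ===
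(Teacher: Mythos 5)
Your sketch matches the paper's treatment: the paper itself simply cites Geisser's Theorem~4.7 for this proposition, and its accompanying remark identifies exactly the ingredients you name --- \'etale descent for the locally free sheaf $\Omega^p$ on a smooth variety, the invariance $\Omega^p(X)\cong\Omega^p(X_Z)$ under blow-up along a smooth center (Gros), and the reduction of a general $\eh$-cover of a smooth variety to composites of these two types via resolution of singularities. There is no genuinely different route here and no gap beyond the level of detail one expects in a sketch.
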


\begin{rem}
The proof of Proposition~\ref{prop_smooth} in~\cite{Gei06} Thm. 4.7 appears rather technical. 
The main mathematical content of Proposition~\ref{prop_smooth} is that $\Omega^p$ satisfies the sheaf property for $\eh$-covers of a smooth variety by a smooth variety. This fact might already be known to the reader in special cases such as an \'etale cover of a smooth variety (see \cite{Milne} III Prop. 3.7), or a blowup $X_Z\to X$ of a smooth variety along a smooth subvariety (observe that $\Omega^p(X_Z)\cong\Omega^p(X)$ by \cite{Gros} Chapitre IV Th\'eor\`eme 1.2.1 in this case). The general case follows from these  by resolution of singularities.
\end{rem}

In order to connect our results with an alternative version considered elsewhere in the literature, we record another comparison result.

\begin{cor}\label{cor_ehcdh}
Let $\Omega^p_\cdh$ be the sheafification of $\Omega^p$ with respect to
the $\cdh$-topology (generated by abstract blow-ups and Nisnevich covers). Then
for any $X\in\Sch$,
\[ \Omega^p_\cdh(X)=\Omega^p_\eh(X)\ .\]
\end{cor}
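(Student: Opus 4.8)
The plan is to compare the two sheafifications through the evident comparison map and to reduce the statement to the case of smooth varieties, where it is classical.

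First I would construct the comparison map. Since every Nisnevich cover is an \'etale cover and abstract blow-ups are coverings in both topologies, the $\cdh$-topology is coarser than the $\eh$-topology. Hence every $\eh$-sheaf is a $\cdh$-sheaf, and applying the universal property of $\cdh$-sheafification to the canonical map $\Omega^p\to\Omega^p_\eh$ produces a natural morphism $\theta\colon\Omega^p_\cdh\to\Omega^p_\eh$ of sheaves on $\Sch$, compatible with the maps out of $\Omega^p$. It then suffices to prove that $\theta_X$ is an isomorphism for every $X\in\Sch$.

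The key observation is that both $\Omega^p_\cdh$ and $\Omega^p_\eh$ share three structural properties, since the relevant coverings lie in both topologies, and that $\theta$ is compatible with all of them: any such sheaf $\Fh$ is invariant under reduction ($\Fh(X)=\Fh(X_\red)$), takes disjoint unions to products, and, for every abstract blow-up $(X',Z)$ with exceptional locus $E=X'\times_X Z$, makes the blow-up square
\begin{CD}
\Fh(X') @>>> \Fh(E)\\
@AAA @AAA\\
\Fh(X) @>>> \Fh(Z)
\end{CD}
Cartesian. This last property is just the sheaf condition for the abstract-blow-up cover $\{X'\to X,\,Z\to X\}$, using the identification $X'\times_X X'\cong X'\cup E$.

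With these in hand I would argue by Noetherian induction on $\dim X$, following the algorithm of Construction~\ref{constr_eh-locally-smooth}. Using reduction and the disjoint-union property, we may assume $X$ irreducible; choosing a resolution $\tilde X\to X$ with center $Z$ and exceptional locus $E$, both of strictly smaller dimension, the Cartesian blow-up square exhibits $\Fh(X)$ as the fibre product $\Fh(\tilde X)\times_{\Fh(E)}\Fh(Z)$. As $\theta$ is compatible with this square and is an isomorphism on $Z$ and $E$ by the inductive hypothesis, $\theta_X$ is an isomorphism as soon as it is one on the \emph{smooth} variety $\tilde X$. Thus everything reduces to the smooth case (the base case $\dim X=0$ being smooth as well). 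For smooth $X$ we have $\Omega^p_\eh(X)=\Omega^p(X)$ by Proposition~\ref{prop_smooth}, and I would establish the analogous identity $\Omega^p_\cdh(X)=\Omega^p(X)$ by exactly the argument sketched in the remark following that proposition: any $\cdh$-cover of a smooth variety refines to a Nisnevich cover followed by blow-ups in smooth centers, while $\Omega^p$ satisfies Nisnevich descent (\cite{Milne} III Prop.~3.7) and is invariant under blow-ups in smooth centers (\cite{Gros} Chapitre IV Th\'eor\`eme 1.2.1). Under these identifications $\theta_X$ is the canonical isomorphism. The main obstacle is precisely this smooth base case: although passing from $\eh$ to $\cdh$ only trades \'etale covers for Nisnevich covers, one must still verify that $\Omega^p$ enjoys $\cdh$-descent on smooth varieties. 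Granting that, together with the Cartesian blow-up square, the propagation from the smooth case to arbitrary $X$ via resolution is routine.
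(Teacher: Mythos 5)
Your proposal is correct and follows essentially the same route as the paper: reduce to the smooth case using that every scheme is $\cdh$-locally smooth (via resolution and the Cartesian blow-up squares), and then invoke the comparison $\Omega^p_\cdh(X)=\Omega^p(X)=\Omega^p_\eh(X)$ for smooth $X$. The only difference is that where the paper cites \cite{CHWaWei11} Lemma~2.9 for the $\cdh$ smooth case, you sketch its proof directly (Nisnevich descent plus invariance under blow-ups in smooth centers), which is exactly the argument indicated in the remark following Proposition~\ref{prop_smooth}.
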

\begin{proof}
Note that the $\cdh$-topology is coarser than the $\eh$-topology but still contains all abstract blow-ups. Thus Construction \ref{constr_eh-locally-smooth} shows that all schemes are $\cdh$-locally smooth. Hence it suffices to consider the case $X$ smooth.

We combine  the comparison theorem for the $\eh$-topology with its analogue in the $\cdh$-topology in \cite{CHWaWei11} Lemma 2.9. 
\end{proof}

We now establish properties of $\eh$-differentials needed later on.

\begin{lemma}\label{cor_injdominant} 
Let $f:X\to Y$ be a dominant morphism. Then 
\[ \Omega^p_\eh(Y)\hookrightarrow\Omega^p_\eh(X)\]
is injective.
\end{lemma}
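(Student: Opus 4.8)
The plan is to reduce the statement to the injectivity of pullback on $\eh$-differentials along a dominant \emph{proper} surjection, and then to the classical case of smooth varieties. First I would reduce to the case that $X$ and $Y$ are reduced, since $\Omega^p_\eh$ is insensitive to nilpotents by Example~\ref{lemma_propereh}(1). Next I would like to assume $X$ and $Y$ are irreducible: a general dominant $f$ need not send components to components, but by Construction~\ref{constr_eh-locally-smooth} every scheme is $\eh$-locally a disjoint union of smooth irreducible varieties, so it suffices to test the vanishing of a form $\alpha\in\Omega^p_\eh(Y)$ with $f^*\alpha=0$ on smooth irreducible pieces. Concretely, given such $\alpha$, I want to show $\alpha|_T=0$ for every morphism $T\to Y$ with $T$ smooth irreducible, which by Proposition~\ref{prop_smooth} (identifying $\Omega^p_\eh$ with $\Omega^p$ on smooth varieties) is the same as the vanishing of an honest K\"ahler form.

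The core difficulty is that $f$ is merely dominant, not surjective or proper, so I cannot directly pull $\alpha$ back along $f$ to deduce much. My strategy is to replace $f$ by a proper surjective model. Since $f$ is dominant, I would factor a suitable modification: resolving $X$ and $Y$ and taking the closure of the graph, I can find smooth irreducible $X'$, $Y'$ with a proper birational $X'\to X$, a resolution $Y'\to Y$, and a dominant morphism $g:X'\to Y'$ lifting $f$. The key algebraic input is then the statement about fields: for a dominant morphism of smooth irreducible varieties in characteristic zero, the function field extension $k(X')/k(Y')$ is separable, hence $\Omega^p(k(Y')/k)\hookrightarrow\Omega^p(k(X')/k)$; this handles $p=1$ and, via exterior powers, all $p$. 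Because $\Omega^p(Y')$ injects into its generic stalk $\Omega^p(k(Y')/k)$ (as $Y'$ is smooth, $\Omega^p_{Y'}$ is locally free and torsion-free), the vanishing of $g^*$ of the generic form forces the generic form to vanish, whence $\alpha|_{Y'}=0$.

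Having shown $\alpha|_{Y'}=0$ for a resolution $Y'\to Y$, I then propagate vanishing to an arbitrary test map $T\to Y$. Given $T\to Y$ with $T$ smooth irreducible, I form the fibre product $Y'\times_Y T$, choose an irreducible component $S'$ dominating $T$, and take a resolution $S\to S'$; then $S\to Y'$ shows $\alpha|_S=0$, and $S\to T$ is a proper dominant morphism of smooth irreducible varieties, so the same field-theoretic injectivity $\Omega^p(k(T)/k)\hookrightarrow\Omega^p(k(S)/k)$ together with Proposition~\ref{prop_smooth} gives $\alpha|_T=0$. Since $T$ ranges over an $\eh$-local cover of $Y$, this forces $\alpha=0$, establishing injectivity. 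I expect the main obstacle to be bookkeeping around dominance versus surjectivity: ensuring that after passing to resolutions and graph closures I retain a \emph{dominant} morphism of irreducible smooth varieties on which the separability argument applies, and correctly reducing the non-irreducible and non-proper situations to this clean case via the $\eh$-local structure.
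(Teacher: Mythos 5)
Your proof is correct and follows essentially the same route as the paper's: reduce to reduced schemes, test vanishing of $\alpha$ on smooth irreducible $T\to Y$ via Construction~\ref{constr_eh-locally-smooth} and Proposition~\ref{prop_smooth}, and conclude from the injectivity $\Omega^p(k(Y')/k)\hookrightarrow\Omega^p(k(X')/k)$ for dominant morphisms of smooth irreducible varieties in characteristic zero, together with $\Omega^p(T)\subset\Omega^p(k(T)/k)$ for smooth $T$. The only structural difference is your intermediate detour through a resolution $Y'\to Y$ (first proving $\alpha|_{Y'}=0$, then propagating via a dominating component of $(Y'\times_Y T)_\red$), whereas the paper propagates directly from $X$ via a component of $(X\times_Y T)_\red$ dominating $T$; your variant costs a little extra bookkeeping with graph closures but makes the existence of the dominating component automatic, since $Y'\to Y$ is proper surjective.
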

\begin{proof}
First assume that $Y$ and $X$ are smooth and irreducible. By Proposition~\ref{prop_smooth}
we have to consider $\Omega^p$. As $\Omega^p$ is a vector bundle on
$X$, we have $\Omega^p(X)\subset\Omega^p(k(X)/k)$.
As $f$ is a dominant morphism between irreducible schemes of characteristic zero, we have $\Omega^p(k(Y)/k)\subset\Omega^p(k(X)/k)$.  This settles the smooth case.

In general, we may assume that $Y=Y_\red$. 
Let $\alpha\in\Omega^p_\eh(Y)$ and write $\alpha|_T$ for the pullback of $\alpha$ by a map $T\to Y$. We assume that $\alpha|_X=0$ and seek to prove that $\alpha=0$. By Construction~\ref{constr_eh-locally-smooth} it suffices to show that $\alpha|_T=0$ for all $T\to Y$ where $T$ is smooth and irreducible.

Choose a resolution $S'\to (X\times_YT)_\red$ of singularities and let $S\subset S'$ be an irreducible component dominating $T$. Then $\alpha|_{X}=0$ implies that $\alpha|_S=0$. This implies $\alpha|_T=0$ by the smooth case.
\end{proof}




\begin{lemma}\label{lemma_Gdescent} Let $X$ be normal and irreducible, $K/K(X)$ a Galois extension with Galois group 
$G$ and $Y$ the normalization of $X$ in $K$. Then
\[ \Omega^p_\eh(X)=\Omega^p_\eh(Y)^G\]
and $\Omega^p_\eh$ has descent for $\pi:Y\to X$, i.e, the sequence
\[ 0\to \Omega^p(X)\to \Omega^p_\eh(Y)\xrightarrow{\text{pr}_1^* - \text{pr}_2^*}\Omega_\eh(Y\times_XY)\]
is exact.
\end{lemma}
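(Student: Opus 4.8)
The plan is to reduce the descent statement to the invariance statement $\Omega^p_\eh(X)=\Omega^p_\eh(Y)^G$, and to prove the latter by Galois descent of K\"ahler differentials at the generic point. To begin, the map $\pi^*$ is injective by Lemma~\ref{cor_injdominant} (as $\pi$ is dominant) and lands in the $G$-invariants because $\pi\circ g=\pi$ for every $g\in G$. I would next identify $\ker(\mathrm{pr}_1^*-\mathrm{pr}_2^*)$ with $\Omega^p_\eh(Y)^G$: pulling back along the graph morphisms $\gamma_g\colon Y\to Y\times_XY$, $y\mapsto(y,gy)$, gives $\gamma_g^*(\mathrm{pr}_1^*\beta-\mathrm{pr}_2^*\beta)=\beta-g^*\beta$, so a class in the kernel is $G$-invariant; conversely the $\gamma_g$ assemble into a finite surjective, hence dominant, morphism $\coprod_{g\in G}Y\to Y\times_XY$, whose pullback is injective by Lemma~\ref{cor_injdominant}, and on each component it sends $\mathrm{pr}_1^*\beta-\mathrm{pr}_2^*\beta$ to $\beta-g^*\beta=0$ for invariant $\beta$. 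Thus $\ker(\mathrm{pr}_1^*-\mathrm{pr}_2^*)=\Omega^p_\eh(Y)^G$, and once this equals $\mathrm{im}(\pi^*)$ the displayed sequence is exact. So everything comes down to surjectivity of $\pi^*$ onto $\Omega^p_\eh(Y)^G$.

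\emph{Passing to the generic point.} Choosing dense open smooth subschemes and combining Proposition~\ref{prop_smooth} with Lemma~\ref{cor_injdominant}, restriction to the generic point embeds $\Omega^p_\eh(X)\hookrightarrow\Omega^p_{K(X)/k}$ and $\Omega^p_\eh(Y)\hookrightarrow\Omega^p_{K/k}$, compatibly with $\pi^*$ and the $G$-action. As $K/K(X)$ is separable we have $\Omega^1_{K/k}=K\otimes_{K(X)}\Omega^1_{K(X)/k}$, hence $(\Omega^p_{K/k})^G=K^G\otimes_{K(X)}\Omega^p_{K(X)/k}=\Omega^p_{K(X)/k}$ using $K^G=K(X)$. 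Therefore a $G$-invariant $\beta$ has as generic value a rational form $\alpha_0\in\Omega^p_{K(X)/k}$, and it remains only to show that $\alpha_0$ lies in $\Omega^p_\eh(X)$; injectivity on the $Y$ side then forces $\pi^*\alpha_0=\beta$.

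\emph{The main obstacle} is to promote the rational form $\alpha_0$ to an honest $\eh$-differential on $X$, which I would do by induction on $\dim X$ using a resolution $\rho\colon X'\to X$ with exceptional image $Z$ and $E=X'\times_XZ$: the cover $\{X'\to X,\,Z\to X\}$ is a proper $\eh$-cover and the associated Mayer--Vietoris (blow-up) square for the $\eh$-sheaf $\Omega^p_\eh$ is Cartesian. The first point is that $\rho^*\alpha_0$ is \emph{regular}, i.e.\ lies in $\Omega^p(X')=\Omega^p_\eh(X')$. To see this, let $\bar Y$ be the normalization of $X'$ in $K$ (finite over $X'$) and $Y'\to\bar Y$ a resolution; then $Y'\to Y$ is a resolution while $Y'\to X'$ is proper of generic degree $|G|$. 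Since $\beta$ is an $\eh$-differential its pullback $\beta|_{Y'}$ is regular, and it coincides with the pullback of $\rho^*\alpha_0$. As we are in characteristic zero, along each prime divisor of $X'$ the morphism $Y'\to X'$ restricts to a ramified extension of discrete valuation rings, so a local residue computation (equivalently, applying $\tfrac1{|G|}\mathrm{Tr}$, which preserves regularity) shows that a pole of $\alpha_0$ would force a pole of $\beta|_{Y'}$. Hence $\rho^*\alpha_0$ has no poles and is regular.

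Feeding $\rho^*\alpha_0\in\Omega^p_\eh(X')$ into the blow-up square, what remains is to match it with a class on the lower-dimensional locus $Z$, which I would obtain from the inductive hypothesis applied to the $G$-stable preimage of $Z$ in $Y$. Controlling this matching along the branch and exceptional loci is the technically delicate step, and is exactly where the characteristic-zero ramification analysis is used; this is the part I expect to be the genuine obstacle, the generic-point Galois descent and the reduction of the descent sequence to the invariance statement being formal by comparison.
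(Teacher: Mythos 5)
Your formal skeleton is sound and close in spirit to the paper's: both arguments reduce everything to identifying $\ker(\mathrm{pr}_1^*-\mathrm{pr}_2^*)$ with $\Omega^p_\eh(Y)^G$ and then to the surjectivity of $\pi^*$ onto the invariants. For the kernel identification the paper does not use your graph morphisms; it restricts to a dense open $U\subset X$ which is smooth and over which $\pi$ is \'etale, sandwiches the sequence for $Y/X$ inside the exact descent sequence for the \'etale Galois cover $\pi^{-1}U\to U$ (where ordinary \'etale descent for $\Omega^p$ applies), and uses Lemma~\ref{cor_injdominant} for the restrictions to these dense opens. Your alternative via $\coprod_{g\in G}\gamma_g\colon\coprod_g Y\to Y\times_XY$ is acceptable, provided you justify that this map is surjective, i.e.\ that $G$ acts transitively on the fibres of $\pi$ (true for the normalization of a normal domain in a Galois extension, but it should be said). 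The generic-point Galois descent and the regularity of $\rho^*\alpha_0$ on a resolution via the trace in characteristic zero are also fine.

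The genuine gap is exactly the step you flag yourself: closing the blow-up square over $Z$. The inductive hypothesis you propose to invoke is the lemma itself, which is stated only for $X$ normal and \emph{irreducible} and $Y$ its normalization in a Galois extension of $K(X)$. The image $Z$ of the exceptional locus is in general reducible and non-normal, and its $G$-stable preimage $\pi^{-1}(Z)\subset Y$ is merely some finite surjective cover of $Z$, not a normalization in a Galois extension of a function field; so the statement you are inducting on does not apply to $\pi^{-1}(Z)\to Z$. To produce the matching section $\alpha_Z\in\Omega^p_\eh(Z)$ you would need descent of $\eh$-forms along arbitrary finite surjective morphisms over arbitrary lower-dimensional bases, which is essentially the full strength of Theorem~\ref{thm_hdescent} that this lemma is meant to feed into — the induction does not close. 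The paper sidesteps this entirely: it never reconstructs the descended form through the blow-up square, but argues only with the two dense open restrictions above. Note that when $X$ is itself smooth — which is the case in which the lemma is applied inside the proof of Theorem~\ref{thm_hdescent} — your trace argument applied directly to $X$ already shows $\alpha_0\in\Omega^p(X)=\Omega^p_\eh(X)$ by Proposition~\ref{prop_smooth}, with no induction needed; so your proof does close in that special case, but not for general normal $X$ as stated.
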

\begin{proof}

If $X$ is smooth and $Y\to X$ is \'etale, then $Y$ is also smooth
and the lemma holds because $\Omega^p$ has \'etale descent. In the general case, let $U\subset X$ be a smooth open  subscheme over which $\pi$ is \'etale. We get a commutative
diagram
\[\begin{CD}
0@>>>\Omega^p(\pi^{-1}U)^G@>>>\Omega^p(\pi^{-1}U)@>>> \Omega^p(\pi^{-1}U\times_U\pi^{-1}U)\\
@.@.@A\subset AA@A\subset AA\\
@.@.\Omega^p_\eh(Y)@>>>\Omega^p_\eh(Y\times_X Y)
\end{CD}\]
The top line is exact by the special case so that
\[ \Omega^p(\pi^{-1}U)^G\cap \Omega^p_\eh(Y)=\Omega^p_\eh(Y)^G\ \]
is the kernel $\Omega^p_\eh(X)$ of the lower line.
\end{proof}


\section{Differential forms in the $\h$-topology}\label{sec_h}
We first review the definition of the $\h$-topology and its properties from \cite{Voe96}. We then study the case of the sheaf of differential forms.

\begin{defn}[(\cite{Voe96} Definitions 3.1.1 and 3.1.2)]\label{defn_h}
A morphism $p:\tilde{X}\to X$ in $\Sch$ is called {\em topological epimorphism} if $p$ is surjective and the Zariski topology of $X$ is the quotient topology of the Zariski topology of $\tilde{X}$. It is called {\em universal topological epimorphism} if its base change by any $f:Z\to X$ in $\Sch$ is also a topological epimorphism.

The {\em $\h$-topology} on $\Sch$ is the Grothendieck topology which has as covering systems  $\{p_i:U_i\to X\}_{i\in I}$ such that 
\[ \coprod_{i\in I} p_i:\coprod_{i\in I} U_i\to X\]
is a universal topological epimorphism.
\end{defn}

\begin{ex}[(\cite{Voe96} Section 3.1)]
The following are $\h$-covers:
\begin{enumerate}
\item flat covers;
\item proper surjective morphisms;
\item quotients by the operation of a finite group.
\end{enumerate}
In particular, $\eh$-covers are $\h$-covers. The case of abstract blow-ups is particularly useful.
\end{ex}

\begin{prop}[(Blow-up square)]\label{prop_blow-up-square} 
Let $(X',Z)$ be an abstract blow-up of $X$ with $E=X'\times_XZ$. Let $\Fh$ be an $\h$-sheaf. Then the blow-up square
\[\begin{CD}
 \Fh(X')@>>>\Fh(E)\\
 @AAA@AAA\\
 \Fh(X)@>>>\Fh(Z)
 \end{CD}\]
is Cartesian.
\end{prop}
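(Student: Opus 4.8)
The plan is to deduce the statement directly from the sheaf axiom for $\Fh$ applied to the abstract blow-up, viewed as an $\h$-cover. Since $(X'\to X, Z\to X)$ is an abstract blow-up, hence an $\eh$-cover and a fortiori an $\h$-cover, the morphism $X'\sqcup Z\to X$ is an $\h$-cover and $\Fh$ satisfies the equalizer condition
\[ \Fh(X)\to \Fh(X'\sqcup Z)\rightrightarrows \Fh\bigl((X'\sqcup Z)\times_X(X'\sqcup Z)\bigr). \]
The assertion that the blow-up square is Cartesian means precisely $\Fh(X)=\{(\alpha,\beta)\in\Fh(X')\times\Fh(Z)\mid \alpha|_E=\beta|_E\}$, so the whole point is to match this with the equalizer above. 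As a preliminary reduction I would use that the reduction $T_\red\to T$ is an $\h$-cover (the abstract blow-up with empty $X'$), so that $\Fh$ is insensitive to nilpotents; this lets me replace all schemes in sight by their reductions and argue geometrically.

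Next I would compute the fourfold fibre product. Writing $\Fh(X'\sqcup Z)=\Fh(X')\times\Fh(Z)$ and decomposing
\[ (X'\sqcup Z)\times_X(X'\sqcup Z)=(X'\times_XX')\sqcup(X'\times_XZ)\sqcup(Z\times_XX')\sqcup(Z\times_XZ), \]
I note that $Z\hookrightarrow X$ is a monomorphism, so $Z\times_XZ\isom Z$ and the corresponding equalizer condition is vacuous; that $X'\times_XZ=Z\times_XX'=E$, so these two factors both contribute exactly the condition $\alpha|_E=\beta|_E$; and that the remaining factor $X'\times_XX'$ contributes the condition $\text{pr}_1^*\alpha=\text{pr}_2^*\alpha$. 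Thus the equalizer is equivalent to the conjunction of $\alpha|_E=\beta|_E$ with $\text{pr}_1^*\alpha=\text{pr}_2^*\alpha$ on $X'\times_XX'$, and it remains to show the latter is implied by the former.

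The main step — and the main obstacle — is therefore the analysis of $X'\times_XX'$. Since $f$ is an isomorphism over $X\ohne Z$, the only part of $(X'\times_XX')_\red$ lying over $X\ohne Z$ is the diagonal $\Delta\isom X'$; writing $W$ for the union of the remaining components, we have $(X'\times_XX')_\red=\Delta\cup W$ with $W$ lying over $Z$. As $\Delta\sqcup W\to(X'\times_XX')_\red$ is proper and surjective, it is an $\h$-cover, so $\text{pr}_1^*\alpha=\text{pr}_2^*\alpha$ may be checked separately on $\Delta$ and on $W$. On $\Delta$ the two projections agree and there is nothing to prove. On $W$ both projections to $X'$ become, after composition with $f$, the same map $W\to X$, which factors through a map $h:W\to Z$; by the universal property of $E=X'\times_XZ$ each projection $\text{pr}_i|_W$ then lifts to $\psi_i:W\to E$ with $q_2\circ\psi_i=h$, where $q_2:E\to Z$ is the structural projection. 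Using $\alpha|_E=q_1^*\alpha=q_2^*\beta$ I obtain $\text{pr}_i^*\alpha=\psi_i^*q_1^*\alpha=\psi_i^*q_2^*\beta=h^*\beta$ for $i=1,2$, so the two pullbacks coincide on $W$.

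This establishes $\text{pr}_1^*\alpha=\text{pr}_2^*\alpha$ and hence the equivalence of the equalizer with the Cartesian condition. The delicate point throughout is purely geometric — the component decomposition of $X'\times_XX'$ into the diagonal and a piece lying over $Z$, together with the factorizations through $E$ — rather than anything about $\Fh$, which enters only through the sheaf axiom and its insensitivity to reduction.
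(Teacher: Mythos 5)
Your proof is correct and follows essentially the same route as the paper: reduce to reduced schemes, identify the Cartesian condition with the sheaf axiom for the cover $\{X'\to X,\ Z\to X\}$, and handle the problematic term $\Fh(X'\times_XX')$ by covering $X'\times_XX'$ with the diagonal together with the part lying over $Z$, on which both projections factor through $E$. The paper phrases this last step via the abstract blow-up $(\mathrm{diag}\colon X'\to X'\times_XX',\ E\times_ZE)$, where $E\times_ZE$ is precisely the preimage of $Z$ in $X'\times_XX'$; your $\Delta\cup W$ decomposition and the lifts $\psi_i\colon W\to E$ are the same device with the details written out.
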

\begin{proof}
We may assume that all schemes are reduced.
Then the statement is equivalent to the sheaf condition for the $\h$-cover
$\{X'\to X, Z\to X\}$ using  the fact that $(\text{diag}:X'\to X'\times_XX', E\times_ZE\subset X'\times_XX')$ is an $\h$-cover.
\end{proof}

\begin{prop}\label{cor_normal-h}
Let 
$\mathfrak{U}=\{p_i:U_i\to X\}_{i\in I}$ be an $\h$-covering in $\Sch$. Then there exists a refinement  such that the index set $I'$ is finite and
$p'_i$ factors as
\[ U'_i\xrightarrow{\iota_i}\bar{U}'\xrightarrow{f}X'\xrightarrow{\pi}X\]
where $\{\iota_i:U'_i\to\bar{U}'\}_{i\in I}$ is an open covering in the Zariski topology, $f$ is finite surjective and $\pi$ is a proper $\eh$-covering by a smooth variety.
\end{prop}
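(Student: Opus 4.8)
The plan is to deduce the statement from Voevodsky's normal form theorem \cite{Voe96} Theorem 3.1.9 by induction on $\dim X$, upgrading the blow-up that appears there to a proper $\eh$-covering by a smooth variety in the manner of Construction~\ref{constr_eh-locally-smooth}. First I would reduce to the case that $X$ is reduced: the reduction $X_\red\to X$ is an $\eh$-cover, hence an $\h$-cover, so it is harmless to replace $X$ by $X_\red$. The base case $\dim X=0$ is immediate, since then $X$ is smooth and the normal form theorem already delivers a refinement of the shape $U'_i\to\bar U'\to X$ with $\bar U'\to X$ finite surjective and $\pi=\id$, which is trivially a proper $\eh$-covering.

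For the inductive step I would apply \cite{Voe96} Theorem 3.1.9 to $\mathfrak U$, producing a finite refinement of the form $U'_i\xrightarrow{\iota_i}\bar U'\xrightarrow{f}X_Z\xrightarrow{\pi_0}X$, where $\{\iota_i\}$ is a Zariski open cover, $f$ is finite surjective and $\pi_0$ is a blow-up; since a blow-up is an isomorphism over the complement of its centre, we may assume $\pi_0$ is birational, i.e. its centre $Z$ is nowhere dense. The only defect is that $\pi_0$ is merely a blow-up and $X_Z$ may be singular. To repair this I would choose a resolution $\rho\colon X_1\to X_Z$ with $X_1$ smooth and set $r=\pi_0\circ\rho\colon X_1\to X$, a resolution of $X$. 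Pulling the cover $\{U'_i\to\bar U'\to X_Z\}$ back along $\rho$ gives a refinement of $\mathfrak U|_{X_1}$ of exactly the desired shape over the now smooth variety $X_1$, namely a finite surjective map $\bar U'\times_{X_Z}X_1\to X_1$ carrying a Zariski cover. Letting $Z_1\subsetneq X$ be the image of the exceptional locus of $r$, a closed subset with $\dim Z_1<\dim X$, the pair $\{X_1\to X,\ Z_1\to X\}$ is an abstract blow-up, and I would apply the inductive hypothesis to the $\h$-cover $\mathfrak U|_{Z_1}$ to obtain a refinement $U''_j\to\bar U''\to X'_{Z_1}\to Z_1$ with $X'_{Z_1}\to Z_1$ a proper $\eh$-covering by a smooth variety.

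It then remains to assemble the pieces. I would set $X'=X_1\sqcup X'_{Z_1}$, with $\pi\colon X'\to X$ given by $r$ on $X_1$ and by $X'_{Z_1}\to Z_1\hookrightarrow X$ on the second summand, take the finite surjective map $\bar U'\times_{X_Z}X_1\ \sqcup\ \bar U''\to X'$, and combine the two Zariski covers. By construction $X'$ is smooth and $\pi$ is proper, and the residue field criterion of Example~\ref{lemma_propereh}\,(\ref{item_2.3.1}) holds: over $X\ohne Z_1$ because $r$ is an isomorphism there, and over $Z_1$ because $X'_{Z_1}\to Z_1$ is a proper $\eh$-covering. Hence $\pi$ is a proper $\eh$-covering by a smooth variety, and the resulting cover refines $\mathfrak U$ with a finite index set.

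The main obstacle is precisely the step replacing the blow-up by a proper $\eh$-covering: neither a blow-up nor an arbitrary resolution is in general a proper $\eh$-covering, because the residue field condition can fail over the exceptional image. This is what forces the recursion over $Z_1$, mirroring Construction~\ref{constr_eh-locally-smooth}; a single application of that construction to $X$ does not suffice, since after pulling $\mathfrak U$ back to the smooth cover one is again confronted with a blow-up that cannot be absorbed into the finite part. The remaining work is bookkeeping — verifying that the pulled-back finite cover stays finite surjective, that the two maps to $X$ through $U'_i$ and through $X_1$ agree (they do, because $\bar U'\times_{X_Z}X_1$ is a fibre product over $X_Z$), and that all index sets stay finite.
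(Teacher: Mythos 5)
Your argument is correct, but it inverts the order of operations relative to the paper and consequently has to re-run the dimension induction of Construction~\ref{constr_eh-locally-smooth} by hand. The paper first replaces $X$ by a proper $\eh$-cover $Y\to X$ with $Y$ smooth (Construction~\ref{constr_eh-locally-smooth}), pulls $\mathfrak{U}$ back to $Y$, and only then invokes Voevodsky's normal form theorem; the blow-up $Y'\to Y$ appearing there can, because $Y$ is smooth, be dominated by a sequence of blow-ups in smooth centres, which is a proper $\eh$-covering with smooth source by Example~\ref{lemma_propereh} (\ref{item_2.3.1}), and one simply composes the two proper $\eh$-covers. So the recursion over the exceptional image $Z_1$ that you describe as ``forced'' is in fact avoidable: the blow-up on the smooth cover is not absorbed into the finite part but upgraded wholesale to a proper $\eh$-cover. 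Your route --- normal form on $X$ first, then resolve $X_Z$, then recurse over the nowhere dense locus $Z_1$ where $r$ fails to be an isomorphism and glue via a disjoint union --- is a valid alternative; it trades the input that an arbitrary blow-up of a smooth variety is dominated by a composition of blow-ups in smooth centres for an explicit induction on $\dim X$, and your verification of the residue-field criterion for $X_1\sqcup X'_{Z_1}\to X$ is exactly right. One small point you should make explicit: the reduction to a nowhere dense centre $Z$ is justified not by ``a blow-up is an isomorphism off its centre'' but by the fact that a refinement of an $\h$-cover is again an $\h$-cover, hence jointly surjective; this forces $X_Z\to X$ to be surjective, and since the image of a blow-up is the closure of $X\setminus Z$, it forces $X\setminus Z$ to be dense, which is what makes $\dim Z_1<\dim X$ and keeps the induction running.
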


\begin{proof}
By Construction \ref{constr_eh-locally-smooth} choose a proper $\eh$-cover $Y\to X$ with $Y$ smooth and consider the pull-back $\mathfrak{U}'$ of $\mathfrak{U}$ to $Y$. 
By \cite{Voe96} Theorem 3.1.9 we may assume that $\mathfrak{U}$ has a refinement in normal form, i.e.,
$\{U'_i\to\bar{U}'\to Y'\to Y\}_{i\in I}$ with $Y'\to Y$ a blow-up
of $Y$ in a closed subvariety, $\bar{U}'\to Y'$ finite surjective,
and the system of $\iota_i$  a finite open cover of $\bar{U}'$.
By blowing up further we may assume that $Y'$
is smooth and $Y'\to Y$ is a sequence of blow-ups in smooth centers. Then
$Y'\to Y$ is a proper $\eh$-covering by Example~\ref{lemma_propereh} (\ref{item_2.3.1}).
We choose $\pi:X'=Y'\to Y\to X$. 
\end{proof}


\begin{defn}Let $\Omega^p_h$ be the sheafification of $\Omega^p$ (or equivalently $\Omega^p_\eh$) in the $\h$-topology. Elements of $\Omega^p_h(X)$ are called
$\h$-differentials on $X$.
\end{defn}

\begin{thm}[($\h$-descent)]\label{thm_hdescent}
The presheaf $\Omega^p_\eh$ on $\Sch$ has $\h$-descent, i.e.,
\[ \Omega^p_\h(X)=\Omega^p_\eh(X)\]
for all $X\in\Sch$. In particular,
\[ \Omega^p_\h(X)=\Omega^p(X)\]
for all $X\in\Sm$.
\end{thm}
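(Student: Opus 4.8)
The plan is to show that the presheaf $\Omega^p_\eh$, which is an $\eh$-sheaf by construction, already satisfies the sheaf condition for every $\h$-cover; then $\h$-sheafification leaves it unchanged, giving $\Omega^p_\h(X)=\Omega^p_\eh(X)$ for all $X$. The ``In particular'' assertion is then immediate: for smooth $X$ we combine this with Proposition~\ref{prop_smooth} to obtain $\Omega^p_\h(X)=\Omega^p_\eh(X)=\Omega^p(X)$.

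First I would reduce to a manageable class of covers. By Proposition~\ref{cor_normal-h}, every $\h$-cover of $X$ admits a refinement factoring as a composite of a proper $\eh$-cover $\pi\colon X'\to X$ by a smooth variety, a finite surjective morphism $f\colon\bar U'\to X'$, and a Zariski open cover $\{\iota_i\colon U'_i\to\bar U'\}$. Invoking transitivity of descent — descent for a composite of covers follows once one has descent for each factor and for the covers induced on all the occurring fibre products, which here are again of the same three kinds after a further refinement — it suffices to verify the sheaf condition separately for each building block. The Zariski cover and the proper $\eh$-cover are both $\eh$-covers, so descent for them is nothing but the $\eh$-sheaf property of $\Omega^p_\eh$.

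This isolates the only new case: descent along a finite surjective morphism $f\colon W\to V$, where by construction $V=X'$ is smooth, hence normal, and (decomposing into irreducible components) may be taken irreducible. Separatedness is free, since $f$ is dominant and Lemma~\ref{cor_injdominant} yields $\Omega^p_\eh(V)\hookrightarrow\Omega^p_\eh(W)$; only the gluing remains. I would reduce to the Galois situation of Lemma~\ref{lemma_Gdescent}: replacing $W$ by its normalization (a finite birational morphism, i.e.\ an abstract blow-up and hence an $\eh$-cover, handled by transitivity) we may assume $W$ normal, then choose a finite Galois extension $L/K(V)$ with $K(V)\subset K(W)\subset L$ and let $V'$ be the normalization of $V$ in $L$. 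Now $V'\to V$ is a Galois cover with some group $G$ to which Lemma~\ref{lemma_Gdescent} applies, $V'\to W$ is the Galois cover for the subgroup $H$ fixing $K(W)$, and $W=V'/H$. Writing $\Omega^p_\eh(V)=\Omega^p_\eh(V')^G$ and $\Omega^p_\eh(W)=\Omega^p_\eh(V')^H$ and analyzing $W\times_V W$ through the double cosets $H\backslash G/H$, a standard invariant-theoretic computation deduces descent for $W\to V$ from Lemma~\ref{lemma_Gdescent}.

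The main obstacle is precisely this finite surjective case: everything $\eh$-related comes for free, but converting the Galois descent of Lemma~\ref{lemma_Gdescent} into descent for an arbitrary finite surjective cover forces the reduction to a normal, irreducible, Galois situation and the careful treatment of the fibre-product terms. Since $W\times_V W$ need not be normal or irreducible, one must normalize and decompose it into components before Lemma~\ref{lemma_Gdescent} can be applied there, and the same care is needed for the fibre products arising in the transitivity step for composite covers; keeping track of these is where the genuine bookkeeping lies.
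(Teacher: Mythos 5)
Your proposal is correct and follows essentially the same route as the paper: reduce via Proposition~\ref{cor_normal-h} to the normal form, dispose of the Zariski and proper $\eh$-layers by the $\eh$-sheaf property, settle the finite surjective layer by Galois descent (Lemma~\ref{lemma_Gdescent}), and glue the layers by a transitivity/diagram-chase argument whose fibre-product input is exactly the injectivity supplied by Lemma~\ref{cor_injdominant}. The only cosmetic difference is that the paper refines $\bar U\to X'$ to an honest Galois cover before applying Lemma~\ref{lemma_Gdescent}, whereas you keep $W\to V$ and deduce its descent from the auxiliary Galois cover $V'\to V$ by the double-coset/invariance argument; both work.
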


\begin{proof}
The second assertion follows from the first by
Proposition \ref{prop_smooth}.

We need to check that for  all $X$ in $\Sch$, the  morphism
\[\phi_X:\Omega^p_\eh(X) \to \lim_{\mathfrak{U}}\check{H}^0(\mathfrak{U},\Omega^p_\eh)\]
where $\mathfrak{U}$ runs through all $\h$-covers of $X$ is an isomorphism. 
Let $X$ be in $\Sch$ and $\mathfrak{U}$ an $\h$-cover of $X$. It suffices to verify the
sheaf condition for $\Omega^p_\eh$ on a refinement of $\mathfrak{U}$. We claim
that the sheaf condition is satisfied in three special cases:
\begin{enumerate}
\item when $\mathfrak{U}$ is a Zariski cover;
\item when $\mathfrak{U}=\{X'\to X\}$ is a proper $\eh$-cover;
\item when $\mathfrak{U}=\{X'\to X\}$ is a finite and surjective map of irreducible normal varieties such that $k(X')/k(X)$ is Galois.
\end{enumerate}
The first two cases hold because they are $\eh$-covers. In the third case,
 the sheaf condition holds by
Lemma \ref{lemma_Gdescent}.


Now let $\mathfrak{U}$ be a general $\h$-covering of $X$. After refining it, we may assume by Proposition~\ref{cor_normal-h}
that it is of the form $\{U_i\to\bar{U}\to X'\to X\}_{i\in I}$ with $X'\to X$ a proper $\eh$-cover with $X'$ smooth, $\bar{U}\to X$ finite and surjective and $\{U_i\}_{i\in I}$ an open cover of $\bar{U}$. For every connected component $X'_j$ of $X'$ choose an irreducible component $\bar{U}_j$ of $\bar{U}$ mapping surjectively to $X'_j$.  We refine the cover by replacing $\bar{U}$ by the disjoint union of the normalizations of
$\bar{U}_j$ in the normal hull of $k(\bar{U}_j)/k(X'_j)$. 

The sheaf condition is satisfied in the three intermediate steps.
In particular (or by Lemma \ref{cor_injdominant}) , we have injections
\[ \Omega^p_\eh(X)\hookrightarrow\Omega^p_\eh(X')\hookrightarrow\Omega^p_\eh(\bar{U})\hookrightarrow\prod_{i\in I}\Omega^p_\eh(U_i)\ .\]
Moreover, $\bar{U}\times_X\bar{U}\to X'\times_XX'$ is proper and surjective, hence by Lemma~\ref{cor_injdominant}
\[ \Omega^p_\eh(X'\times_X X')\hookrightarrow \Omega^p_\eh(\bar{U}\times_X\bar{U})\ .\]
A little diagram chase then allows us to conclude from the sheaf conditions
for $X'\to X$ and $\bar{U}\to X'$ that the sheaf condition is satisfied
for $\bar{U}\to X$. Repeating the argument with $\{U_i\to \bar{U}\}_{i\in I}$
we prove the Theorem.
\end{proof}

\begin{rem}One of the main results of Lee in \cite{Lee09} is the fact that
$\Omega^p$ is a sheaf on $\Sm$ equipped with the $h$-topology (loc. cit. Proposition 4.2). This seems basically the same as the above Theorem \ref{thm_hdescent}, though we did not check the details. His proof is different and made technically more complicated by the fact that $\Sm$ is not closed under fibre products.
\end{rem}

\begin{rem}\label{rem_simple-formula} Note that the proof contains a simple formula for $\Omega^p_h(X)$: Choose an $\h$-cover $X'\to X$ with
$X'$ smooth. Choose an $\h$-cover $X''\to X'\times_X X'$ with $X''$ smooth. Then
\[ 0\to \Omega^p_\h(X)\to\Omega^p(X')\to\Omega^p(X'')\]
is exact. 
\end{rem}

\begin{cor}\label{cor_easy-char}
Theorem \ref{thm1} of the introduction holds.
\end{cor}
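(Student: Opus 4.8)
The plan is to construct the natural comparison map from $\Omega^p_\h(X)$ to the space of compatible families on the right-hand side, which I denote $\Lambda^p(X)$, and to prove it is an isomorphism using the concrete description of $\Omega^p_\h(X)$ from Remark~\ref{rem_simple-formula}.

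First I would define the forward map $\Theta\colon\Omega^p_\h(X)\to\Lambda^p(X)$. For $\omega\in\Omega^p_\h(X)$ and any $f\colon Y\to X$ with $Y$ smooth, set $\alpha_f:=f^*\omega$, regarded as an element of $\Omega^p(Y)=\Omega^p_\h(Y)$ via Theorem~\ref{thm_hdescent}. The required compatibility $\phi^*\alpha_f=\alpha_{f'}$ for a triangle $f\circ\phi=f'$ is immediate from functoriality of the pullback, so $\Theta$ is well defined. Injectivity is also quick: choosing a smooth $\h$-cover $a\colon X'\to X$ (which exists by Construction~\ref{constr_eh-locally-smooth}), the component of $\Theta(\omega)$ at $f=a$ is $a^*\omega$, and $\omega\mapsto a^*\omega$ is injective by the exact sequence of Remark~\ref{rem_simple-formula}; hence $\Theta(\omega)=0$ forces $\omega=0$.

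Second, for surjectivity I would reconstruct $\omega$ from a given family $(\alpha_f)$. Fix smooth $\h$-covers $a\colon X'\to X$ and $\phi\colon X''\to X'\times_X X'$, and write $g_i=\mathrm{pr}_i\circ\phi\colon X''\to X'$. Both composites $a\circ g_1$ and $a\circ g_2$ equal the same morphism $h\colon X''\to X$, so applying the compatibility condition to the two triangles $X''\xrightarrow{g_i}X'\xrightarrow{a}X$ gives $g_1^*\alpha_a=\alpha_h=g_2^*\alpha_a$. This is precisely the statement that $\alpha_a$ lies in the kernel of $\Omega^p(X')\to\Omega^p(X'')$, so by Remark~\ref{rem_simple-formula} there is a unique $\omega\in\Omega^p_\h(X)$ with $a^*\omega=\alpha_a$.

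The main obstacle, and the heart of the argument, is to verify that this single $\omega$ reproduces the \emph{whole} family, i.e.\ that $f^*\omega=\alpha_f$ for every $f\colon Y\to X$ and not merely for $f=a$. Here I would compare the two sides over a common smooth cover: using that $Y\times_X X'\to Y$ is surjective (because $a$ is), take a smooth $\h$-cover $W\to Y\times_X X'$ via Construction~\ref{constr_eh-locally-smooth}, with projections $q\colon W\to Y$ and $q'\colon W\to X'$ satisfying $f\circ q=a\circ q'$. Then compatibility of $(\alpha_f)$ together with $a^*\omega=\alpha_a$ yields $q^*\alpha_f=(q')^*\alpha_a=(q')^*a^*\omega=q^*(f^*\omega)$. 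Since $q$ is surjective, hence dominant, $q^*$ is injective by Lemma~\ref{cor_injdominant}, and therefore $\alpha_f=f^*\omega$. This proves $\Theta(\omega)=(\alpha_f)$, so $\Theta$ is an isomorphism. The only delicate points I anticipate are keeping track of the various fibre-product morphisms accurately and checking at each stage that the relevant cover is dominant, so that Lemma~\ref{cor_injdominant} applies.
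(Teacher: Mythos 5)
Your proposal is correct and follows essentially the same route as the paper: define the forward map by pullback, use Remark~\ref{rem_simple-formula} applied to the component at a smooth $\h$-cover $X'\to X$ to produce the candidate $\omega$, and then verify $f^*\omega=\alpha_f$ for general $f\colon Y\to X$ by passing to a smooth cover of $Y\times_X X'$ and invoking injectivity of pullback along the dominant projection to $Y$ (Lemma~\ref{cor_injdominant}). The only cosmetic difference is that the paper takes a resolution of $(Y\times_X X')_\red$ where you take a smooth $\h$-cover via Construction~\ref{constr_eh-locally-smooth}; these play identical roles.
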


\begin{proof}
We call a family $(\alpha_f)_f$ as in Theorem 1 a \emph{compatible} family. By Theorem~\ref{thm_hdescent}, any $\beta\in\Omega^p_\h(X)$  determines a compatible family $\beta_f=f^*(\beta)\in\Omega^p(Y)=\Omega^p_\h(Y)$.

Conversely, let us choose maps $X'\xrightarrow{g} X$ and $X''\xrightarrow{h} X'\times_XX'\xrightarrow{i} X$ as in Remark~\ref{rem_simple-formula}. For any compatible family $(\alpha_f)_f$ we have $(\text{pr}_1\circ h)^*\alpha_g=\alpha_{i\circ h}=(\text{pr}_2\circ h)^*\alpha_g$ so that there exists a unique $\beta\in\Omega^p_\h(X)$ such that $\alpha_g=g^*\beta$.

It remains to show that $f^*(\beta)=\alpha_f$ for any morphism $f:Y\to X$ from a non-singular variety $Y$ to $X$. Let $p:Y'\to (Y\times_XX')_\red$ be a resolution and denote the map to $X$ by $j:Y'\to X$. Then the claim follows from
\[(\text{pr}_{Y}\circ p)^*\alpha_f=\alpha_j=(\text{pr}_{X'}\circ p)^*\alpha_g=(\text{pr}_{X'}\circ p)^*g^*\beta=(\text{pr}_{Y}\circ p)^*f^*\beta\]
and the injectivity of $(\text{pr}_{Y}\circ p)^*:\Omega^p_\h(Y)\to\Omega^p_\h(Y')$.
\end{proof}

The Theorem gives a more conceptual proof for the result of Lecomte and Wach on the existence of transfers.

\begin{cor}[(\cite{LW09})]
$\Omega^p$ is an (\'etale) sheaf with transfers on $\Sm$.
\end{cor}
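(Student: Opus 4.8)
The plan is to deduce the transfer structure from the $\h$-sheaf property of $\Omega^p$ together with the Galois descent of Lemma~\ref{lemma_Gdescent}, rather than from an explicit analysis of K\"ahler differentials as in \cite{LW09}. Recall that a structure of presheaf with transfers on $\Sm$ amounts to extending $\Omega^p$ to a contravariant additive functor on the category $\mathrm{Cor}$ of smooth varieties and finite correspondences. A generating morphism of $\mathrm{Cor}$ is an integral subscheme $Z\subset X\times Y$ that is finite and surjective onto a component of $X$; it comes with projections $p\colon Z\to X$ (finite surjective) and $q\colon Z\to Y$. Since $\Omega^p_\h$ is defined and functorial on all of $\Sch$ and agrees with $\Omega^p$ on $\Sm$ by Theorem~\ref{thm_hdescent}, the pullback $q^*\colon\Omega^p(Y)=\Omega^p_\h(Y)\to\Omega^p_\h(Z)$ is available even though $Z$ may be singular. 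Thus the only thing left to construct is a trace $\mathrm{tr}_p\colon\Omega^p_\h(Z)\to\Omega^p_\h(X)=\Omega^p(X)$, whereupon the functor sends the correspondence $[Z]$ to $\mathrm{tr}_p\circ q^*\colon\Omega^p(Y)\to\Omega^p(X)$, extended additively.

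To build $\mathrm{tr}_p$ for a finite surjective morphism $p\colon Z\to X$ with $X$ smooth, I would first reduce to $Z$ normal and irreducible, decomposing along components and using that restriction to the normalization is injective by Lemma~\ref{cor_injdominant}. One then passes to the Galois closure: let $K$ be the normal hull of $k(Z)/k(X)$ with group $G=\mathrm{Gal}(K/k(X))$ and subgroup $H=\mathrm{Gal}(K/k(Z))$, and let $\tilde Z$ be the normalization of $X$ in $K$. By Lemma~\ref{lemma_Gdescent} (applied to $X$ and, by transitivity of normalization, to $Z$) one has $\Omega^p_\h(\tilde Z)^G=\Omega^p(X)$ and $\Omega^p_\h(\tilde Z)^H=\Omega^p_\h(Z)$, and I would set
\[ \mathrm{tr}_p(\alpha)=\sum_{gH\in G/H} g^*\alpha \in \Omega^p_\h(\tilde Z)^G=\Omega^p(X),\qquad \alpha\in\Omega^p_\h(\tilde Z)^H. \]
This is independent of the chosen Galois closure, and $\mathrm{tr}_p\circ p^*=\deg(p)\cdot\mathrm{id}$ is immediate from $[G:H]=\deg(p)$.

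The remaining, and principal, work is to verify that these maps assemble into a functor on $\mathrm{Cor}$: additivity is clear and the identity correspondence gives the identity, so the crux is compatibility with composition. In cycle-theoretic terms the composite of $Z\in\mathrm{Cor}(X,Y)$ and $W\in\mathrm{Cor}(Y,T)$ is read off from $Z\times_Y W$ with intersection multiplicities, and one must check that the associated composite of trace-and-pullback maps agrees with the transfer attached to this cycle; I expect this to be the main obstacle. The plan is to reduce it to two properties of the trace defined above: transitivity of $\mathrm{tr}$ for a tower of finite surjective morphisms (immediate from the coset description), and a base-change formula $g^*\circ\mathrm{tr}_p=\mathrm{tr}_{p'}\circ g'^*$ for a Cartesian square in which $g$ is the base map and $p',g'$ are the base changes of $p,g$. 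The latter is where the $\h$-topology pays off: after pulling the (possibly singular) fibre products back to resolutions one is reduced to the smooth case, where the identity follows from the Galois descent formula and the separability of all extensions in characteristic zero, the intersection multiplicities being accounted for by the degrees of the residue field extensions. Finally, the \'etale-sheaf property is already in hand, being built into Proposition~\ref{prop_smooth} and Lemma~\ref{lemma_Gdescent}, so $\Omega^p$ is an \'etale sheaf with transfers on $\Sm$.
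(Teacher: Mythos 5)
Your construction of the trace itself is sound: for a finite surjective $p\colon Z\to X$ with $X$ smooth you normalize, pass to the Galois closure $\tilde Z$, and use Lemma~\ref{lemma_Gdescent} to identify $\Omega^p(X)$ with $\Omega^p_\h(\tilde Z)^G$ and $\Omega^p_\h(Z^\nu)$ with $\Omega^p_\h(\tilde Z)^H$, so the coset sum lands where it should; injectivity of pullback along dominant maps (Lemma~\ref{cor_injdominant}) justifies the passage to the normalization, and $\mathrm{tr}_p\circ p^*=\deg(p)\cdot\mathrm{id}$ together with independence of the chosen Galois closure are fine. Note, though, that this is a genuinely different route from the paper, which disposes of the entire statement in three lines: $\Omega^p_\h$ is an $\h$-sheaf, every $\h$-sheaf carries a canonical transfer structure by \cite{Scholbach} Proposition~2.2, and Theorem~\ref{thm_hdescent} identifies $\Omega^p_\h$ with $\Omega^p$ on $\Sm$.

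The gap is exactly where you locate it, and your sketch does not close it: compatibility with composition of finite correspondences. The composite of $Z\in\mathrm{Cor}(X,Y)$ and $W\in\mathrm{Cor}(Y,T)$ is a cycle whose components carry multiplicities given by the lengths of the local rings of $Z\times_Y W$ at its generic points. In characteristic zero all residue field extensions are separable, but these lengths can still exceed $1$ (non-reduced fibre products come from tangency, not from inseparability), so your parenthetical claim that the multiplicities are ``accounted for by the degrees of the residue field extensions'' is not correct as stated. The Galois trace only sees the function fields of the components, i.e.\ separable degrees; matching it against the cycle-theoretic multiplicities requires a projection/base-change formula for the trace with these lengths built in, and this is precisely the hard content of the Suslin--Voevodsky theory of transfers for $\qfh$-sheaves (\cite{SV00}, repackaged in \cite{Scholbach} Proposition~2.2, which is what the paper cites), where it is handled by identifying the free $\qfh$-sheaf on $X$ with a sheaf of relative cycles. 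As written, your argument reconstructs the easy half of that theorem and defers exactly the half that the citation exists to supply. To complete it you would need to carry out the base-change-with-multiplicities computation (e.g.\ by localizing at the generic points of $X$ and reducing to finite flat algebras, where the length appears as a rank), or else simply invoke the $\qfh$-transfer theorem as the paper does.
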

\begin{proof}$\Omega^p_\h$ is by definition an $\h$-sheaf. By \cite{Scholbach} Proposition 2.2 it has a canonical transfer structure. By Theorem \ref{thm_hdescent} $\Omega^p_h(X)=\Omega^p(X)$ for smooth $X$.
\end{proof}

\begin{rem}Having achieved the identification of $\Omega^p_\eh$ and $\Omega^p_\h$, the results of the previous section trivially apply to $\Omega^p_\h$ as well.
Moreover, by Corollary \ref{cor_ehcdh} we can also identify $\Omega^p_\h$ with
$\Omega^p_\cdh$. Hence all results in the series of papers
by Corti\~nas, Haesemeyer, Schlichting, Walker and Weibel
(\cite{CHSW08}, \cite{CHWei08}, \cite{CHSWaWei00}, \cite{CHWaWei10}, \cite{CHWaWei11}, \cite{CHWaWei13}) on $\Omega^p_\cdh$ can be read as results on $\Omega^p_\h$. E.g. \cite{CHSWaWei00} Theorem 4.1 in degree zero and Corollary \ref{cor_ehcdh} give a proof of our Remark \ref{rem_toric}.
\end{rem}



\section{First Properties and Examples}\label{sec:4}
In this section we gather some facts about $\h$-differential forms when considered as sheaves in the Zariski-topology. 
To this end, let us introduce the following notation.  

\begin{defn} \label{defn_restrict}
Let $\Fh$ be an $\h$-sheaf on $\Sch$. We write $\Fh|_X$ for the Zariski-sheaf $U\mapsto \Fh(U)$ for $U\subset X$ open.
\end{defn}

The basic properties of the resulting sheaves $\Omega^p_\h|_X$ and their relation to other sheaves of differential forms are summarized in the sequel.

\begin{prop}[(Properties of $\Omega^p_\h|_X$)]\label{prop_properties}
The sheaves $\Omega^p_\h|_X$ satisfy the following:
\begin{enumerate}
 \item\label{it_coh} $\Omega^p_\h|_X$ is a torsion-free coherent sheaf of $\Oh_X$-modules;
 \item \label{it_smooth} if $X$ is smooth, then $\Omega^p_\h|_X=\Omega^p_X$;
 \item\label{it_red} if $r:X_\red\to X$ is the reduction, then $r_*\Omega^p_\h|_{X_\red}=\Omega^p_\h|_X$;
 \item\label{it_kahl} if $X$ is reduced, there exists a natural inclusion  $\Omega^p_X/\torsion\subset\Omega^p_\h|_X$; 
 \item\label{it_normal} if $X$ is normal, there exists a natural inclusion $\Omega^p_\h|_X\subset \Omega^{[p]}_X$;
 \item\label{it_highdegrees} for any $p>\dim(X)$, $\Omega^p_\h|_X=0.$
\end{enumerate}
\end{prop}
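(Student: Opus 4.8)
The plan is to prove the six items in an order that makes the later ones rest on the earlier ones, everything ultimately coming from one injection of $\Omega^p_\h|_X$ into the pushforward of ordinary forms from a resolution. First I would dispose of (\ref{it_smooth}) and (\ref{it_red}). Item (\ref{it_smooth}) is immediate from Theorem~\ref{thm_hdescent}: every open $U\subseteq X$ is again smooth, so $\Omega^p_\h(U)=\Omega^p(U)=\Omega^p_X(U)$ compatibly with restriction. For (\ref{it_red}) I would apply the blow-up square of Proposition~\ref{prop_blow-up-square} to the abstract blow-up $(\emptyset,X_\red)$ of $X$ (equivalently, use that $X_\red\to X$ is an $\eh$-cover), which yields $\Omega^p_\h(U)=\Omega^p_\h(U_\red)$ for every open $U$; this is precisely $r_*\Omega^p_\h|_{X_\red}=\Omega^p_\h|_X$. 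Since the reduction $r$ is finite, this reduces the coherence part of (\ref{it_coh}) to the reduced case, and I would treat the torsion-free part and (\ref{it_highdegrees}) directly on reduced schemes.

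Now the key construction for reduced $X$. By Construction~\ref{constr_eh-locally-smooth} choose a proper $\eh$-cover $\pi\colon X'\to X$ with $X'$ smooth, then a proper $\eh$-cover of $X'\times_X X'$ by a smooth $X''$; write $\sigma\colon X''\to X$ for the composite, which is again proper. Because these $\h$-covers restrict to $\h$-covers of every open $U\subseteq X$, the exact sequence of Remark~\ref{rem_simple-formula} sheafifies to
\[0\to\Omega^p_\h|_X\to\pi_*\Omega^p_{X'}\to\sigma_*\Omega^p_{X''},\]
where I have identified $\Omega^p_{X'}$ with $\Omega^p_\h|_{X'}$ via Theorem~\ref{thm_hdescent}. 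Coherence in (\ref{it_coh}) then follows because $\pi_*\Omega^p_{X'}$ and $\sigma_*\Omega^p_{X''}$ are coherent (proper pushforward of coherent sheaves) and $\Omega^p_\h|_X$ is the kernel of a morphism between them over the Noetherian scheme $X$. For torsion-freeness I would argue directly: by Lemma~\ref{cor_injdominant} the restriction $\Omega^p_\h(U)\hookrightarrow\Omega^p(\pi^{-1}U)$ is injective, the target being sections of a vector bundle on the smooth $\pi^{-1}U$; a section killed by a non-zero-divisor $f$ pulls back to one killed by the nonzero $\pi^*f$, hence vanishes. Finally (\ref{it_highdegrees}) drops out of the same injection: for $p>\dim X=\dim X'$ one has $\Omega^p_{X'}=0$, so $\Omega^p_\h|_X=0$.

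It remains to produce the two natural comparisons. The sheafification morphism $\Omega^p\to\Omega^p_\h$ gives a natural map $\Omega^p_X\to\Omega^p_\h|_X$; as the target is torsion-free it annihilates the torsion subsheaf and factors through $\overline\phi\colon\Omega^p_X/\torsion\to\Omega^p_\h|_X$. By (\ref{it_smooth}) this is an isomorphism over the smooth locus, which is dense since $X$ is reduced and $\mathrm{char}\,k=0$; hence $\ker\overline\phi$ is a subsheaf of the torsion-free sheaf $\Omega^p_X/\torsion$ supported on a proper closed subset, so it vanishes, proving (\ref{it_kahl}). For (\ref{it_normal}), with $X$ normal and $j\colon X^\reg\hookrightarrow X$, restriction of sections gives $\Omega^p_\h|_X\to j_*(\Omega^p_\h|_{X^\reg})=j_*\Omega^p_{X^\reg}=\Omega^{[p]}_X$ by (\ref{it_smooth}); injectivity follows once more from torsion-freeness of $\Omega^p_\h|_X$ and the density of $X^\reg$, a section dying on $X^\reg$ being torsion.

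The one genuinely technical point is the localized exact sequence: I must check that the $\h$-covers furnished by Construction~\ref{constr_eh-locally-smooth} together with a resolution of $X'\times_X X'$ can be chosen simultaneously smooth \emph{and} proper over $X$, and that they restrict to $\h$-covers of every open, so that the global statement of Remark~\ref{rem_simple-formula} genuinely sheafifies and coherence can be read off from proper pushforward. The remaining steps are then formal consequences of torsion-freeness and density on the smooth locus.
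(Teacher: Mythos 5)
Your proposal is correct and follows essentially the same route as the paper: the sheafified exact sequence $0\to\Omega^p_\h|_X\to\pi_*\Omega^p_{X'}\to\sigma_*\Omega^p_{X''}$ from Remark~\ref{rem_simple-formula} with $X'$, $X''$ proper over $X$ gives coherence, Lemma~\ref{cor_injdominant} gives torsion-freeness, and the remaining items follow from Theorem~\ref{thm_hdescent}, the sheaf condition for $X_\red\to X$, and injectivity into $j_*\Omega^p_U$ for the dense smooth locus $U$. The only cosmetic difference is that for item~(\ref{it_highdegrees}) you read the vanishing off the injection into $\pi_*\Omega^p_{X'}$ with $\dim X'=\dim X$, whereas the paper restricts to a smooth dense open and quotes the smooth case; both are fine.
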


\begin{proof}Recall that we have $\Omega^p_\eh=\Omega^p_\h$ by Theorem 
\ref{thm_hdescent}. 

Let us choose $X'$ and $X''$ as in Remark \ref{rem_simple-formula} and denote the maps to $X$ by $\pi':X'\to X$ and $\pi'':X''\to X$. We may assume that both $X'$ and $X''$ are proper over $X$. Then $\Omega^p_\h|_X$ is the kernel of a morphism $\pi'_*\Omega^p_{X'}\to\pi''_*\Omega^p_{X''}$ between coherent sheaves which gives the coherence in (\ref{it_coh}). It is torsion-free by Lemma~\ref{cor_injdominant}. 
Item (\ref{it_smooth}) follows from the sheafification of Theorem \ref{thm_hdescent}.
Item (\ref{it_red}) follows from the $\h$-sheaf condition for the cover
$X^\red\to X$. 

Let $X$ be normal and $U$ the smooth locus.
By Lemma \ref{cor_injdominant} and Definition \ref{defn_reflexive} the restriction map
$\Omega^p_\h(X)\to\Omega^p_\h(U)=\Omega^p(U)=\Omega^{[p]}(X)$ is injective.
Hence item (\ref{it_normal}) holds.

For Item (\ref{it_kahl}) note that by Item (\ref{it_coh}) the natural map $\Omega^p_X\to\Omega^p_\h|_X$ induces a map $\Omega^p_X/\torsion\to\Omega^p_\h|_X$. We have to check that it is injective.
Let $U\subset X_\red=X$ be the smooth locus. 
By torsion-freeness $\Omega^p_\h|_X\subset j_*\Omega^p_\h|_U=j_*\Omega^p_U$. On the other hand
$\Omega^p_{X_\red}/\torsion\subset j_*\Omega^p_{U}$ because both agree on $U$.

Finally (\ref{it_highdegrees}), we may by (\ref{it_red}) assume that $X$ is reduced, by (\ref{it_coh}) restrict to an open subset where it is smooth. By (\ref{it_smooth}) the vanishing follows from the vanishing for K\"ahler differentials.
\end{proof}

We now turn to the study of the two extreme cases $p=0$ and $p=\dim (X)$. First, let us consider the case $p=0$ and observe that $\Oh_\h=\Omega^0_\h$.

\begin{defn}[(\cite{Tra70})]\label{defn_sn}
Let $X$ be a variety with total ring of fractions $K(X)$.
The {\em semi-normalization} $X^\sn$ of $X$ is the maximal finite
cover $\pi:X^\sn\to X$ which is bijective on points
and induces an isomorphism on residue fields. If $X$ is not reduced,
we define $X^\sn$ as the semi-normalization of the reduction of $X$.

A variety is called {\em semi-normal} if it agrees with its semi-normalization.
\end{defn}

\begin{ex}
\begin{enumerate}
\item Normal varieties are semi-normal.
\item The cuspidal curve with equation $y^2=x^3$ has as semi-normalization
the affine line.
\item The nodal curve with equation $y^2=x^2(x-1)$ is semi-normal.
\end{enumerate}
\end{ex}

\begin{prop}\label{prop_sn}Let $X\in\Sch$. Then
\[ \Oh_\h(X)=\Oh( X^\sn)\ .\]
In particular, $\Oh_\h(X)=\Oh(X)$ if $X$ is semi-normal.
\end{prop}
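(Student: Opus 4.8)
The plan is to identify $\Oh_\h(X)$ with the global functions on the semi-normalization by using the $\h$-sheaf property together with the simple description of $\Omega^0_\h=\Oh_\h$ from Remark~\ref{rem_simple-formula}. Since $\Oh_\h(X)=\Oh_\h(X_\red)$ by the reduction step in Proposition~\ref{prop_properties}~(\ref{it_red}), and $X^\sn$ is defined via the reduction, I may assume $X$ is reduced from the outset. The essential point is that global regular functions already satisfy $\h$-descent for proper surjective and finite surjective covers up to the semi-normalization defect, so the sheafification only adds those sections that ``descend'' along the semi-normalization map.

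First I would treat the normal case. If $X$ is normal and irreducible, choose a resolution $X'\to X$, which is proper birational with $X'$ smooth. Then $\Oh_\h(X)$ is the equalizer of $\Oh(X')\rightrightarrows\Oh(X'')$ for a suitable $X''$ resolving $X'\times_X X'$. Since $X$ is normal, $\Oh(X')=\Oh(X)$ (a proper morphism with connected fibers from a variety to a normal variety induces an isomorphism on global functions, as $\pi_*\Oh_{X'}=\Oh_X$), and the equalizer condition is automatically met, giving $\Oh_\h(X)=\Oh(X)=\Oh(X^\sn)$ since normal varieties are semi-normal.

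Next I would reduce the general reduced case to the normal case via the normalization $\nu:X^\nu\to X$, which is finite and surjective, hence an $\h$-cover. By Remark~\ref{rem_simple-formula} applied with this cover (after resolving to the smooth setting), $\Oh_\h(X)$ is the equalizer of the two pullbacks $\Oh_\h(X^\nu)\rightrightarrows\Oh_\h(X^\nu\times_X X^\nu)$. Using the normal case, $\Oh_\h(X^\nu)=\Oh(X^\nu)$, and the equalizer of regular functions on the normalization that agree on the fiber product is precisely the ring of functions on the \emph{semi-normalization}: this is the classical characterization of $X^\sn$ as the largest subscheme of $X^\nu$ over $X$ that is bijective on points with trivial residue field extensions, equivalently, functions on $X^\nu$ compatible under the two projections from $X^\nu\times_X X^\nu$. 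The final statement then follows, since a semi-normal variety equals its own semi-normalization.

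I expect the main obstacle to be matching the $\h$-descent equalizer precisely with Traverso's definition of the semi-normalization in Definition~\ref{defn_sn}. The subtlety is that the $\h$-cover description involves functions on $X^\nu$ whose two pullbacks to $X^\nu\times_X X^\nu$ agree, and I must verify that this gluing condition cuts out exactly $\Oh(X^\sn)$ rather than something larger or smaller. Concretely, a function on $X^\nu$ descends to $X^\sn$ iff it takes equal values on points of $X^\nu$ lying over the same point of $X$ (the condition defining $X^\sn$ as bijective with isomorphic residue fields), and I would argue this is exactly the equalizer condition, taking care that the fiber product $X^\nu\times_X X^\nu$ detects precisely these coincidences of points and that no infinitesimal or residue-field subtleties intervene in characteristic zero.
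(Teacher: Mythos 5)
Your argument is correct in outline, but it takes a genuinely different route from the paper. The paper's proof is essentially a citation: it identifies $\Oh_\h$ with the representable $\h$-sheaf $L(\A^1)$ and invokes Voevodsky's result (\cite{Voe96} Proposition 3.2.10, building on Theorem 3.2.9) that $\Mor_{L(\Sch)}(L(X),L(\A^1))=\Mor_{\Sch}(RL(X),\A^1)$ with $RL(X)=X^\sn$; the entire content of the statement is thus outsourced to the theory of representable $\h$-sheaves. You instead run the descent by hand: reduce to $X$ reduced, settle the normal case via a resolution and $\pi_*\Oh_{X'}=\Oh_X$ (Zariski's main theorem), and then descend along the $\h$-cover $X^\nu\to X$, identifying the equalizer of $\Oh(X^\nu)\rightrightarrows\Oh_\h(X^\nu\times_XX^\nu)$ with $\Oh(X^\sn)$. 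This is more elementary and more self-contained, at the price of having to prove the Traverso-style identification yourself. Two points in your sketch deserve to be made precise. First, the sheaf axiom for the cover $X^\nu\to X$ (not Remark~\ref{rem_simple-formula}, which concerns smooth covers) gives the equalizer in $\Oh_\h$ of the honest fiber product; to convert this into a condition on the \emph{reduced} fiber product $W=(X^\nu\times_XX^\nu)_\red$ you should use $\Oh_\h(Y)=\Oh_\h(Y_\red)$ together with the injectivity $\Oh(W)\hookrightarrow\Oh_\h(W)$ for reduced $W$ (Proposition~\ref{prop_properties}~(\ref{it_kahl})), so that two regular functions on $W$ agree in $\Oh_\h(W)$ iff they agree in $\Oh(W)$. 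Second, the identification of the resulting subring $C=\{b\in\Oh(X^\nu):\mathrm{pr}_1^*b=\mathrm{pr}_2^*b\text{ in }\Oh(W)\}$ with $\Oh(X^\sn)$ needs both inclusions: $\Oh(X^\sn)\subset C$ because $(X^\sn\times_XX^\sn)_\red$ is the diagonal copy of $X^\sn$; and $C\subset\Oh(X^\sn)$ because the equalizer condition forces, for primes $q_1,q_2$ of $\Oh(X^\nu)$ over the same prime $p$, that every $b\in C$ has $b(q_1)=b(q_2)\in\kappa(p)$ (here characteristic zero enters, via separability of residue extensions at the points of $\kappa(q)\otimes_{\kappa(p)}\kappa(q)$), whence $\Spec C\to X$ is bijective with trivial residue extensions and $C$ lands in the seminormalization by its maximality. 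With these two steps filled in, your proof is complete and arguably more illuminating than the paper's, which hides the same computation inside Voevodsky's $RL(X)=X^\sn$.
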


\begin{proof}
We write $\tilde{X}$ for the presheaf $T\mapsto X(T)$ so that $\Oh=\tilde{\A}^1$. As in \cite{Voe96} Section 3.2, we write
$L(X)$ for the $\h$-sheaf associated to $\tilde{X}$ which means that $\Oh_\h=L(\A^1)$. By the universal property of sheafification
\[ \Oh_\h(X)=\Mor_{L(\Sch)}(L(X),L(\A^1))\]
where $L(\Sch)$ is the category of representable h-sheaves. Moreover,
\cite{Voe96} Proposition 3.2.10 asserts that 
\[ \Oh_\h(X)=\Mor_{L(\Sch)}(L(X),L(\A^1))=\Mor_{\Sch}(RL(X),\A^1)=\Oh(RL(X))\]
where $RL(X)=X^\sn$ is the semi-normalization of $X$.
\end{proof}

We now turn to the other extreme case $p=\dim X$.

\begin{prop}\label{prop_omega-top-degree} Let $X$ be a 
variety of dimension $d$ and $\pi:\tilde{X}\to X$
a resolution, i.e., a proper birational morphism with
$\tilde{X}$ smooth. Then
\[ \Omega^{d}_\h|_X=\pi_*\Omega^d_{\tilde{X}}. \]
\end{prop}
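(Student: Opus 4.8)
The plan is to exhibit both sheaves as subsheaves of $\pi_*\Omega^d_{\tilde X}$ and then prove the nonobvious inclusion, using crucially that $p=d$ is the top degree. First I would observe that the assertion is Zariski-local on $X$ and that, by Proposition~\ref{prop_properties}(\ref{it_highdegrees}), both sides vanish near points lying only on components of dimension $<d$; hence I may assume $X$ irreducible of dimension $d$, so that $\tilde X$ is irreducible of dimension $d$ as well. Since $\pi$ is proper and birational it is proper and surjective, hence an $\h$-cover by the smooth variety $\tilde X$. Choosing in addition a smooth $\h$-cover $X''\to\tilde X\times_X\tilde X$, with $q_1,q_2\colon X''\to\tilde X$ the two induced projections, Remark~\ref{rem_simple-formula} supplies for every open $U\subseteq X$ an exact sequence
\[ 0\to\Omega^d_\h(U)\to\Omega^d(\pi^{-1}U)\xrightarrow{q_1^*-q_2^*}\Omega^d(X''_U), \]
where $X''_U$ denotes the part of $X''$ over $U$. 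This already identifies $\Omega^d_\h|_X$ with the subsheaf $\ker(q_1^*-q_2^*)\subseteq\pi_*\Omega^d_{\tilde X}$, so the whole content is the reverse inclusion: every local section $\alpha\in\Omega^d_{\tilde X}(\pi^{-1}U)$ must satisfy $q_1^*\alpha=q_2^*\alpha$.

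The geometric input is an analysis of $\tilde X\times_X\tilde X$. Let $V\subseteq X$ be a dense open over which $\pi$ is an isomorphism, and let $E=\tilde X\ohne\pi^{-1}(V)$ be the exceptional locus, so that $\dim E\le d-1$. Over $V$ the fibre product reduces to the graph of the identity, so $\tilde X\times_X\tilde X$ carries a distinguished diagonal component $\Delta'$ (the closure of that graph), on which $\mathrm{pr}_1=\mathrm{pr}_2$ by separatedness. I claim every other irreducible component $C$ is carried by both projections into $E$: the locus $P$ of the fibre product lying over $V$ is open, irreducible and dense in $\Delta'$, hence any component $C\ne\Delta'$ is disjoint from $P$, so its image in $X$ lies in $X\ohne V$ and therefore $\mathrm{pr}_i(C)\subseteq E$ for $i=1,2$.

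I would then argue component by component on the smooth variety $X''$. An irreducible component $W$ of $X''$ maps into a single component of $\tilde X\times_X\tilde X$. If that is $\Delta'$, then $q_1|_W=q_2|_W$, whence $q_1^*\alpha=q_2^*\alpha$ on $W$. Otherwise $q_i(W)\subseteq E$ for both $i$, and here the top-degree hypothesis is decisive: a $d$-form pulled back along a morphism whose image has dimension $\le d-1$ vanishes, since at the generic point of $W$ the codifferential $q_i^*\Omega^1_{\tilde X}\to\Omega^1_W$ has rank $<d$, so its $d$-th exterior power annihilates $\alpha$; as $\Omega^d_W$ is torsion-free, this forces $q_i^*\alpha=0$, and a fortiori $q_1^*\alpha=q_2^*\alpha$ on $W$. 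Assembling over all components of $X''$ gives $q_1^*\alpha=q_2^*\alpha$, so $\alpha\in\Omega^d_\h(U)$, which establishes the missing inclusion and hence the equality.

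The step I expect to be the main obstacle is exactly this vanishing together with its geometric underpinning: showing that all non-diagonal components of $\tilde X\times_X\tilde X$ are swept into the exceptional locus $E$ of dimension $<d$, and then that pullbacks of top-degree forms along non-dominant maps vanish. This is precisely where $p=\dim X$ enters and where the corresponding statement breaks down for smaller $p$, so I would be careful to keep the argument confined to the top degree and to track that each component of $X''$ indeed lands in a single component of the fibre product.
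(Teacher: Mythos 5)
Your argument is correct in substance but follows a genuinely different route from the paper's. The paper observes that $(\tilde X\to X,\, Z\to X)$, with $Z=\pi(E)$ the image of the exceptional locus, is an abstract blow-up, applies the blow-up square of Proposition \ref{prop_blow-up-square} to $\Omega^d_\h$, and concludes immediately from the vanishing $\Omega^d_\h|_E=\Omega^d_\h|_Z=0$ supplied by Proposition \ref{prop_properties}(\ref{it_highdegrees}); the cocycle condition on $\tilde X\times_X\tilde X$ never appears explicitly. You instead use the descent sequence of Remark \ref{rem_simple-formula} for the proper cover $\tilde X\to X$ and verify the cocycle condition by hand, splitting $\tilde X\times_X\tilde X$ into the diagonal component and components swept into $E$, and using that top-degree forms die under pullback along maps with image of dimension $<d$. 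Both proofs hinge on the same dimension count $\dim E\le d-1$; yours is longer but more self-contained (it does not invoke the blow-up square for general $\h$-sheaves), and the component analysis is carried out correctly, including the torsion-freeness step that upgrades the generic vanishing of $q_i^*\alpha$ to actual vanishing.

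One step you should repair is the reduction to $X$ irreducible. Zariski-localizing and discarding points that lie only on components of dimension $<d$ does not make $X$ irreducible: two $d$-dimensional components may intersect, and a point of their intersection has no irreducible neighbourhood. Fortunately your argument needs no such reduction. For $X=\bigcup_i X_i$ one has $\tilde X=\coprod_i\tilde X_i$ and $\tilde X\times_X\tilde X=\coprod_{i,j}\tilde X_i\times_X\tilde X_j$; the only components meeting the locus over $V$ are the diagonal components $\Delta_i'$ (one for each $X_i$), on which $\mathrm{pr}_1=\mathrm{pr}_2$, while every other component --- including all of $\tilde X_i\times_X\tilde X_j$ for $i\ne j$, which lies over $X_i\cap X_j$ --- projects into the exceptional set $E=\tilde X\ohne\pi^{-1}(V)$ of dimension $\le d-1$, so your vanishing argument applies verbatim.
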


\begin{proof} Let $E\subset\tilde{X}$ be the reduced exceptional locus of $\pi$, and let $i:Z:=\pi(E)\to X$ be the inclusion map of its image in $X$. The blow-up sequence 
\[ 0 \to \Omega^d_\h|_X\to \pi_*\Omega^d_\h|_{\tilde{X}}\oplus i_*\Omega^d_\h|_Z \to i_*\pi|_{E*}\left(\Omega^d_\h|_E\right) \]
obtained from Proposition \ref{prop_blow-up-square} together with the vanishing $\Omega^d_\h|_E=0$ and $\Omega^d_\h|_Z=0$ by Proposition \ref{prop_properties}(\ref{it_highdegrees}) complete the proof. 
\end{proof}

Recall that any projective scheme has a dualizing sheaf in the sense of \cite{Har77} Section III.7.

\begin{cor}
Let $X$ be a normal projective variety of pure dimension $d$ with dualizing sheaf $\omega^o_X$. Then there exists an inclusion
\[ \Omega^d_\h|_X\subset \omega^o_X .\]
If in addition $X$ has only rational singularities (see before Remark \ref{prop_rational}), then
\[ \Omega^d_\h|_X = \omega^o_X.\]
\end{cor}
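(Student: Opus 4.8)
The plan is to use Proposition~\ref{prop_omega-top-degree} to identify $\Omega^d_\h|_X$ with $\pi_*\Omega^d_{\tilde X}$ for a resolution $\pi\colon\tilde X\to X$, and then compare this pushforward with the dualizing sheaf $\omega^o_X$. The starting observation is that on the smooth locus $U=X^\reg$ the sheaf $\Omega^d_{\tilde X}$ pushes forward to $\Omega^d_U=\omega^o_U$, since the dualizing sheaf of a smooth variety of pure dimension $d$ is $\Omega^d$. So both $\Omega^d_\h|_X$ and $\omega^o_X$ agree with the same locally free rank-one sheaf on $U$, and the inclusion $X\ohne U$ has codimension $\geq 2$ because $X$ is normal.

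For the first assertion, the plan is to produce the inclusion $\pi_*\Omega^d_{\tilde X}\subset\omega^o_X$ directly. Here I would invoke the trace map (Grothendieck duality) for the proper morphism $\pi$, which gives a natural map $\pi_*\omega_{\tilde X}=\pi_*\Omega^d_{\tilde X}\to\omega^o_X$. One then checks this map is injective: since $X$ is normal, $\omega^o_X$ is reflexive, and both sheaves are torsion-free and coincide on the big open set $U$ over which $\pi$ is an isomorphism; an injective-on-$U$ map of torsion-free sheaves that agree on $U$ is injective. Concretely, $\omega^o_X=j_*\omega^o_U=j_*\Omega^d_U$ where $j\colon U\to X$, because normality forces $\omega^o_X$ to be reflexive hence $S_2$; and $\pi_*\Omega^d_{\tilde X}\subset j_*\Omega^d_U$ since $\tilde X\ohne\pi^{-1}(U)$ maps into the codimension-$\geq 2$ set $X\ohne U$ and $\Omega^d_{\tilde X}$ is torsion-free. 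This yields the claimed inclusion $\Omega^d_\h|_X\subset\omega^o_X$.

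For the second assertion, under the hypothesis of rational singularities, the point is that the above inclusion becomes an equality. Recall that $X$ has rational singularities precisely when $R^i\pi_*\Oh_{\tilde X}=0$ for $i>0$; by Grauert--Riemenschneider vanishing (valid in characteristic zero) one has $R^i\pi_*\omega_{\tilde X}=0$ for $i>0$, and the Grothendieck trace map $\pi_*\omega_{\tilde X}\to\omega^o_X$ is then an isomorphism exactly when the singularities are rational (this is one of the standard characterizations of rational singularities for normal Cohen--Macaulay, or more generally for normal varieties via the comparison of $\pi_*\omega_{\tilde X}$ with $\omega^o_X$). Thus $\Omega^d_\h|_X=\pi_*\Omega^d_{\tilde X}=\omega^o_X$.

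The main obstacle I anticipate is pinning down precisely which characterization of ``rational singularities'' is being used and citing the correct duality statement: the cleanest route is to note that for normal $X$ the trace map $\pi_*\omega_{\tilde X}\to\omega^o_X$ is always injective with the equality $\pi_*\omega_{\tilde X}=\omega^o_X$ being equivalent to rationality of the singularities (see e.g.\ Kempf's criterion / the standard references on rational singularities). Everything else is a codimension-$2$ extension argument combined with Proposition~\ref{prop_omega-top-degree} and Proposition~\ref{prop_properties}, so the only genuinely nontrivial input is Grauert--Riemenschneider vanishing together with this duality-theoretic characterization of rational singularities.
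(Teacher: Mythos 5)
Your argument is correct in substance but takes a different route from the paper's in both halves, and in each case the paper's is lighter. For the inclusion, the paper does not pass through $\pi_*\Omega^d_{\tilde X}$ at all: it just notes that $\omega^o_X$ is reflexive, hence $\omega^o_X=j_*\Omega^d_{X^\reg}=\Omega^{[d]}_X$, so the inclusion is literally Proposition~\ref{prop_properties}(\ref{it_normal}). Your detour via Proposition~\ref{prop_omega-top-degree} and the trace map also works, but the step ``$\pi_*\Omega^d_{\tilde X}\subset j_*\Omega^d_U$'' needs a word of justification: a resolution need not be an isomorphism over all of $U=X^\reg$, so you should invoke purity of the exceptional locus (the non-isomorphism locus of $\pi$ meets $U$ in codimension $\geq 2$) together with local freeness of $\Omega^d_U$ to extend across it. For the equality in the rational case, the paper deliberately avoids exactly the inputs you use: it does \emph{not} invoke Grauert--Riemenschneider vanishing or Kempf's criterion (see the remark immediately following the corollary), but instead applies Grothendieck duality directly to the quasi-isomorphism $\Oh_X\to R\pi_*\Oh_{\tilde X}$ to obtain that $R\pi_*\Omega^d_{\tilde X}[d]\to\omega_X$ is a quasi-isomorphism of complexes, and then reads off $\pi_*\Omega^d_{\tilde X}=H^{-d}\omega_X=\omega^o_X$; no separate vanishing of $R^i\pi_*\omega_{\tilde X}$ is needed. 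Your route is valid in the direction you use it, but be careful with the phrase ``the trace map is an isomorphism exactly when the singularities are rational'': the converse direction of Kempf's criterion requires Cohen--Macaulayness, so only the implication (rational $\Rightarrow$ $\pi_*\omega_{\tilde X}=\omega^o_X$) holds for general normal $X$ --- which is fortunately all you need here.
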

\begin{proof}The dualizing sheaf is always reflexive, hence $\omega^o_X=j_*\Omega^d_{X^\reg}$ where $j:X^\reg\to X$ is the inclusion of the regular locus. Hence
the first statement is simply Proposition \ref{prop_properties} (\ref{it_normal}).

Now assume  that $X$ has rational singularities. 
Let $\pi:\tilde{X}\to X$ be a resolution. 
 Let $\omega_X$ be the dualizing complex of $X$ normalized such that
$\pi^!\omega_X=\Omega^d_{\tilde{X}}[d]$. By the definition of rational singularities
\[ \Oh_X\to R\pi_*\Oh_{\tilde{X}}\]
is a quasi-isomorphism. 
By Grothendieck duality, this implies that
\[ R\pi_*\Omega^d_{\tilde{X}}[d]\to \omega_X\]
is a quasi-isomorphism. Taking cohomology $H^{-d}$ in degree $-d$ yields $\omega^0_X=\pi_*\Omega^d_{\tilde{X}}$. (Note that the dualizing sheaf $\omega^o$ is the first non-vanishing cohomology sheaf $H^{-d}\omega_X$ of the dualizing complex.)
\end{proof}

\begin{rem}The above proof does not use the fact that a variety with rational singularities is Cohen-Macaulay. Hence it avoids the use of Kodaira vanishing.
\end{rem}

We now turn to varieties with special types of singularities. We say that a scheme $X$ has \emph{normal crossings} if $X$ is Zariski-locally isomorphic to a normal crossings divisor in a smooth variety or equivalently, if $X$ is \'etale locally isomorphic to a union of coordinate hyperplanes in the affine space. Observe that this implies that the irreducible components of some \'etale cover are smooth.

\begin{prop}[(Normal crossing schemes)]\label{prop_h-dnc}
Let $X$ be a scheme with normal crossings. Then
\[ \Omega^p_h|_{X}=\Omega^p_X/\torsion \ .\]
\end{prop}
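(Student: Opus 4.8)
First I would record that a normal crossings scheme is reduced and seminormal, so that Proposition~\ref{prop_properties}(\ref{it_kahl}) already provides a natural inclusion $\Omega^p_X/\torsion\hookrightarrow\Omega^p_\h|_X$ of coherent sheaves (coherence by Proposition~\ref{prop_properties}(\ref{it_coh})); the whole task then reduces to proving that this inclusion is surjective. Surjectivity of a map of coherent sheaves can be tested after a faithfully flat, in particular an \'etale, base change, and both sides commute with \'etale pullback (the source because \'etale maps are flat, the target because $\Omega^p_\h$ is an $\h$-sheaf and \'etale covers are $\h$-covers). Since being a normal crossings scheme is an \'etale-local condition, I may therefore assume that $X$ is, Zariski-locally, an affine open in the union $\bigcup_{i=1}^r H_i$ of the coordinate hyperplanes $H_i=\{x_i=0\}$ in $\A^n$, with coordinate ring $A=k[x_1,\dots,x_n]/(x_1\cdots x_r)$.

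Next I would compute the target explicitly. The normalization $\nu\colon\tilde X=\coprod_{i=1}^r H_i\to X$ is finite and surjective, hence an $\h$-cover, and $\tilde X$ is smooth. Since $H_i\times_X H_j=H_i\cap H_j$ is a coordinate subspace and therefore smooth, the fibre product $\tilde X\times_X\tilde X=\coprod_{i,j}(H_i\cap H_j)$ is itself smooth and can play the role of the smooth scheme covering $X'\times_X X'$ in Remark~\ref{rem_simple-formula}. The exact sequence there, which rests on $\h$-descent (Theorem~\ref{thm_hdescent}), then identifies the sections $\Omega^p_\h(X)$ with the compatible families
\[ \Omega^p_\h(X)=\Bigl\{(\alpha_i)_i\in\bigoplus_{i=1}^r\Omega^p(H_i)\ \Big|\ \alpha_i|_{H_i\cap H_j}=\alpha_j|_{H_i\cap H_j}\text{ for all }i\neq j\Bigr\}, \]
and under this identification the inclusion to be shown surjective is simply the restriction map $\beta\mapsto(\beta|_{H_i})_i$.

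The heart of the argument is the surjectivity of this restriction, which I would establish by a coefficientwise lifting. Writing a candidate as $\beta=\sum_{|J|=p}f_J\,dx_J$ with $f_J\in A$ (where $dx_J=dx_{j_1}\wedge\cdots\wedge dx_{j_p}$ for $J=\{j_1<\cdots<j_p\}$) and using $dx_i|_{H_i}=0$, its restriction to $H_i$ is $\sum_{J\not\ni i}(f_J\bmod x_i)\,dx_J$. Given a compatible family $\alpha_i=\sum_{J\not\ni i}g^{(i)}_J\,dx_J$, I would fix a multi-index $J$ and read off from the relations $\alpha_i|_{H_i\cap H_l}=\alpha_l|_{H_i\cap H_l}$ that the coefficients $(g^{(i)}_J)_{i\le r,\ i\notin J}$ form a compatible family of regular functions on the lower-dimensional normal crossings scheme $Z_J=\bigcup_{i\le r,\ i\notin J}H_i$. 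The key point is that $Z_J$ is seminormal, so Proposition~\ref{prop_sn} in degree $p=0$ gives $\Oh_\h(Z_J)=\Oh(Z_J)$ and hence these compatible functions glue to a single $\tilde g_J\in\Oh(Z_J)$. Because $Z_J\subset X$ is closed and $X$ is affine, the restriction $\Oh(X)\twoheadrightarrow\Oh(Z_J)$ is surjective; picking $f_J\in A$ lifting $\tilde g_J$ for each $J$ and forming $\beta=\sum_J f_J\,dx_J$ then yields $\beta|_{H_i}=\alpha_i$ for all $i$, which is exactly surjectivity.

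I expect the main obstacle to be precisely this lifting step: one must recognise that the gluing of a compatible family of K\"ahler forms is controlled, coefficient by coefficient, by the gluing of ordinary functions on the auxiliary schemes $Z_J$, and that the latter is governed by seminormality through Proposition~\ref{prop_sn}. Everything else---the reduction to the coordinate model and the description of $\Omega^p_\h|_X$ as compatible families---is formal once $\h$-descent and the smoothness of all the intersections $H_i\cap H_j$ are in hand.
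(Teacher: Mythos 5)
Your argument is correct, but it takes a genuinely different route from the paper's. The paper, after the same \'etale-local reduction to a union of coordinate hyperplanes, proceeds by induction on the number $c(X)$ of irreducible components: it peels off one component $E$, compares the blow-up sequence for $(X'\to X,\,E\to X)$ with the corresponding sequence of torsion-free K\"ahler differentials, and reduces everything to the exactness of the latter, which is checked by a computation with local equations. You instead exploit that the normalization $\coprod_i H_i\to X$ is an $\h$-cover with \emph{smooth} double product $\coprod_{i,j}(H_i\cap H_j)$, so that Remark~\ref{rem_simple-formula} gives the clean equalizer description of $\Omega^p_\h(X)$ as compatible families $(\alpha_i)_i$; you then lift such a family coefficientwise, reducing the gluing of $p$-forms to the gluing of functions on the auxiliary hyperplane configurations $Z_J$, i.e.\ to the case $p=0$, which you dispatch via Proposition~\ref{prop_sn}. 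This avoids the induction and the blow-up triangle entirely and makes transparent that the whole statement is driven by the degree-zero case. What your version buys is this conceptual reduction to $p=0$ plus an explicit preimage $\beta=\sum_J f_J\,dx_J$; what the paper's version buys is that its inductive scheme sets up exactly the diagram reused later for the derived statement (Corollary~\ref{cor_dnc}).

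One step you assert rather than prove is that the configurations $Z_J$ (and $X$ itself) are seminormal; Proposition~\ref{prop_sn} only gives $\Oh_\h(Z_J)=\Oh(Z_J^{\sn})$, so without seminormality your glued function $\tilde g_J$ would a priori live on $Z_J^{\sn}$ rather than $Z_J$. This is a standard fact, equivalent to the elementary equalizer statement
\[ k[x_1,\dots,x_n]/(x_{i_1}\cdots x_{i_s})\;\xrightarrow{\ \sim\ }\;\Bigl\{(f_i)\in\prod_i k[x]/(x_i)\ \Big|\ f_i\equiv f_j \bmod (x_i,x_j)\Bigr\},\]
which can be verified directly (say by induction on $s$); you should include that verification, since it is the actual computational heart of your proof, playing the role that the exactness of the torsion-free K\"ahler sequence plays in the paper's.
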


\begin{proof} There is a natural inclusion $\Omega^p_X/\torsion\subset\Omega^p_\h|_X$ by Proposition \ref{prop_properties}(\ref{it_kahl}). We may work \'etale locally in order to show that it is an isomorphism. Hence we can assume that $X$ is a union of coordinate hyperplanes in the affine space.

We prove the claim by induction on the number $c(X)$ of irreducible components of $X$, the case $c(X)\leq 1$ following from Proposition \ref{prop_properties} (\ref{it_smooth}). For $c(X)>1$ choose some irreducible component $E\subset X$ and let $X'=\overline{X\backslash E}.$ The blow-up sequence associated with $(X'\to X, E\to X)$ fits into a commutative diagram
\[
\xymatrix{0 \ar[r] & \Omega^p_\h|_X \ar[r] & \Omega^p_\h|_{X'}\oplus\Omega^p_\h|_E \ar[r] & \Omega^p_\h|_{X'\cap E} \\
 & \Omega^p_X/\text{tor} \ar[r] \ar@{^{(}->}[u]_{\text{inj. by}\atop\ref{prop_properties}(\ref{it_kahl})} &\Omega^p_{X'}/\text{tor}\oplus\Omega^p_E/\text{tor} \ar[r]\ar@{=}[u]^{\text{induction}\atop\text{on } c}_{\text{smooth case}\atop\ref{prop_properties}(\ref{it_smooth})} &\Omega^p_{X'\cap E}/\text{tor}\ar@{^{(}->}[u]_{\text{inj. by}\atop\ref{prop_properties}(\ref{it_kahl})}}\]
of sheaves on $X$, where the horizontal maps in the second row are given by the pullback of torsion-free differential forms constructed in \cite{Fer70} Proposition 1.1. By a diagram chase one reduces the proof to showing that the second row is exact. This can be checked by a calculation using local equations for $X$ in a smooth ambient space.
\end{proof}

\begin{prop}[(Quotient singularities)]
Let $X$ be a smooth quasi-projective variety with an operation of a finite group $G$. Then
\[ \Omega^p_h|_{X/G}=(\Omega_X^p)^G=\Omega^{[p]}_{X/G}\ .\]
\end{prop}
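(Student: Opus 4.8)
The plan is to prove the two equalities separately: the first by $\h$-descent, the second by a reflexivity argument. Write $Y=X/G$, which exists as a normal quasi-projective variety (a finite quotient of a smooth quasi-projective variety is normal), and let $\pi\colon X\to Y$ be the quotient map; it is finite, surjective and, being a quotient by a finite group, an $\h$-cover. Throughout I would use $\Omega^p_\h=\Omega^p_\eh$ and $\Omega^p_\h|_X=\Omega^p_X$ for smooth $X$ from Theorem~\ref{thm_hdescent}.

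For the first equality I would apply the explicit formula of Remark~\ref{rem_simple-formula} with the tautological choices $X'=X$ (an $\h$-cover of $Y$ by a smooth variety) and $X''=\coprod_{g\in G}X$, where the $g$-th copy maps to $X\times_YX$ by $\iota_g\colon x\mapsto(x,gx)$. This map is finite and surjective, hence a proper $\h$-cover of $X\times_YX$ by a smooth variety, so it is admissible in Remark~\ref{rem_simple-formula}. Since $\text{pr}_1\circ\iota_g=\id$ and $\text{pr}_2\circ\iota_g$ is the action of $g$, the resulting exact sequence
\[ 0\to\Omega^p_\h(Y)\to\Omega^p(X)\xrightarrow{\ \alpha\mapsto(\alpha-g^*\alpha)_{g}\ }\prod_{g\in G}\Omega^p(X) \]
identifies $\Omega^p_\h(Y)$ with $\Omega^p(X)^G$. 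Running the same argument over each Zariski-open $U\subseteq Y$ (replacing $X$ by the $G$-stable open $\pi^{-1}(U)$) upgrades this to the equality of Zariski-sheaves $\Omega^p_\h|_{X/G}=(\Omega^p_X)^G$, where the right-hand side denotes $(\pi_*\Omega^p_X)^G$.

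For the second equality, the inclusion $(\Omega^p_X)^G=\Omega^p_\h|_{Y}\subseteq\Omega^{[p]}_{Y}$ is immediate from Proposition~\ref{prop_properties}(\ref{it_normal}). For the reverse inclusion I would exploit reflexivity: $(\pi_*\Omega^p_X)^G$ is a direct summand (via the Reynolds operator, as $\mathrm{char}\,k=0$) of $\pi_*\Omega^p_X$, which is reflexive because $\pi$ is finite between normal varieties and $\Omega^p_X$ is locally free; hence $(\Omega^p_X)^G$ is reflexive, and so is $\Omega^{[p]}_{Y}$ by Definition~\ref{defn_reflexive}. It therefore suffices to prove equality over the big open set $Y^\reg$ (its complement has codimension $\ge2$ since $Y$ is normal). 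Given $\sigma\in\Omega^p_{Y^\reg}(U)$, the form $\pi^*\sigma\in\Omega^p_X(\pi^{-1}U)$ is regular and $G$-invariant (because $\pi\circ g=\pi$), hence lies in $(\Omega^p_X)^G(U)$; it maps to $\sigma$ under the inclusion $\iota$ because the two agree on the dense étale locus and both sheaves are torsion-free. Thus $\iota$ is an isomorphism over $Y^\reg$, and applying $j_*$ for $j\colon Y^\reg\hookrightarrow Y$ yields $(\Omega^p_X)^G=\Omega^{[p]}_{X/G}$ on all of $Y$.

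The main obstacle is conceptual rather than computational: one must resist comparing the two sheaves over the larger étale locus (whose complement is only codimension one, where a direct computation across the ramification divisor would be needed), and instead work over the smooth locus $Y^\reg$. There the pullback of a reflexive form is automatically a regular form — pullback never introduces poles — so the codimension-one ramification of $\pi$ plays no role, and the reflexivity of both sides then lets one conclude globally. The only other point requiring (standard) care is the reflexivity of $\pi_*\Omega^p_X$ under the finite morphism $\pi$.
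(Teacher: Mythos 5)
Your proof is correct, but it is organized quite differently from the paper's. For the first equality the paper simply invokes Lemma~\ref{lemma_Gdescent} (Galois descent for $\Omega^p_\eh$ along the normalization in a Galois extension), whereas you instantiate Remark~\ref{rem_simple-formula} with the explicit smooth $\h$-cover $\coprod_{g\in G}X\to X\times_{X/G}X$, $x\mapsto(x,gx)$, and read off $\Omega^p_\h(X/G)=\Omega^p(X)^G$ from the resulting exact sequence; this is essentially a hands-on, self-contained version of the same descent statement (and it quietly sidesteps the irreducibility/faithfulness hypotheses built into the Galois-theoretic formulation). The real divergence is in the second equality: the paper does not prove $(\pi_*\Omega^p_X)^G=\Omega^{[p]}_{X/G}$ at all but cites Knighten (\cite{Kni73}, $p=1$) and Lecomte--Wach (\cite{LW09}, general $p$), while you give a complete argument via reflexivity --- $(\pi_*\Omega^p_X)^G$ is a Reynolds-operator direct summand of the reflexive sheaf $\pi_*\Omega^p_X$, both sides are reflexive, and they visibly agree over $Y^{\reg}$ because pullback of a regular form along $\pi$ is regular and $G$-invariant. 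Your approach buys independence from the cited references at the cost of importing two standard facts (reflexivity of finite pushforwards of locally free sheaves on normal varieties, and the $j_*j^*$ characterization of reflexive sheaves); your observation that one should compare over $Y^{\reg}$ rather than over the \'etale locus is exactly the right move and is where a naive attempt would stall. I see no gap.
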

\begin{proof} Recall that $X/G$ is normal since $X$ is so and locally $X/G=\text{Spec}(A^G)$ where $X=\text{Spec}(A)$. Moreover $X\to X/G$ is a ramified cover
with Galois group $G$. The first assertion is immediate from Lemma \ref{lemma_Gdescent}. The second
was established by Knighten for $p=1$ in \cite{Kni73} and for general $p$ by Lecomte and Wach in \cite{LW09}.
\end{proof}

The case of klt singularities will be treated in Section \ref{sec:5}, see Theorem \ref{thm_klt-sing}.


\section{Application: Reflexive forms on klt base spaces}\label{sec:5}
In this section we examine the sheaf of $\h$-differential forms on a complex variety whose singularities are mild in the sense of the Minimal Model Program. More precisely, we are concerned with the following class of singular varieties:

\begin{defn}\label{defn_klt-base-space}
An irreducible variety $X$ over the complex numbers is said to be a \emph{klt base space}, if there exists an effective $\mathbb{Q}$-divisor $\Delta\geq 0$ such that the pair $(X,\Delta)$ has Kawamata log terminal singularities (see \cite{KM98} Definition 2.34).
\end{defn}

\begin{rem}
In Definition~\ref{defn_klt-base-space} we follow the terminology in \cite{Keb12} Definition~5.1. This notion is equivalent to the definition of log terminal singularities in the sense of \cite{dFH09} Theorem 1.2.
\end{rem}

\begin{ex}\label{ex_toric}
A normal toric variety $X$ is locally a klt base space. Indeed, a normal projective toric variety is a klt base space by \cite{CLS11} Example 11.4.26.
\end{ex}

Recall that a klt base space $X$ is normal by definition so that $\Omega^p_\h|_X\subset\Omega^{[p]}_X$ by Proposition \ref{prop_properties} (\ref{it_normal}). The following theorem establishes the inverse inclusion and will be proved at the end of the section.

\begin{thm}\label{thm_klt-sing}
Let $X$ be a klt base space and $p\geq 0$. Then
\[ \Omega^p_\h|_X=\Omega^{[p]}_X\ . \]
\end{thm}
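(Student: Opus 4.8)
The plan is to prove the missing inclusion $\Omega^{[p]}_X \subset \Omega^p_\h|_X$, since the reverse inclusion is already recorded in Proposition~\ref{prop_properties}(\ref{it_normal}). By Theorem~\ref{thm_hdescent} and Remark~\ref{rem_simple-formula}, a section of $\Omega^p_\h$ over (an open subset of) $X$ is the same as a Kähler differential form $\alpha$ on a resolution $X'\to X$ whose two pullbacks to a resolution $X''$ of $X'\times_X X'$ agree; equivalently, by Corollary~\ref{cor_easy-char} (Theorem~\ref{thm1}), it is a compatible family of Kähler forms on all smooth $Y\to X$. So given a reflexive form $\sigma\in\Omega^{[p]}_X(X)$, I must produce such a compatible family and check it genuinely lands in $\Omega^p_\h$.

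First I would fix a resolution $\pi:X'\to X$. Because $X$ is a klt base space, the crucial input is the extension theorem of Greb--Kebekus--Kovács--Peternell \cite{GKKP11}: every reflexive $p$-form $\sigma$ on $X$ extends to an honest Kähler (indeed regular) $p$-form $\pi^{[*]}\sigma \in \Omega^p_{X'}(X')$ on the resolution. This is exactly the statement that reflexive differentials on klt spaces have no poles on any log resolution. Thus to the reflexive form $\sigma$ I can assign the genuine form $\alpha := \pi^{[*]}\sigma$ on $X'$, which is the candidate representative of an $\h$-differential.

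Next I would verify the descent/compatibility condition, i.e.\ that the two pullbacks of $\alpha$ to $X''\to X'\times_X X'$ coincide, so that $\alpha$ does define a class in $\Omega^p_\h(X)$ via Remark~\ref{rem_simple-formula}. The key point is that $\sigma$ is a single form on $X$, agreeing on the big open regular locus $X^\reg$ over which $\pi$ is an isomorphism; both pullbacks of $\alpha$ agree there, and since $X''$ is smooth and irreducible over each component, two regular forms on $X''$ that agree on a dense open set are equal (forms on smooth varieties inject into their value at the generic point, cf.\ the proof of Lemma~\ref{cor_injdominant}). Hence the compatibility holds and $\alpha$ yields an element $\beta\in\Omega^p_\h(X)$. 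Finally I would check that $\beta$, viewed back in $\Omega^{[p]}_X$ under the inclusion of Proposition~\ref{prop_properties}(\ref{it_normal}), restricts to $\sigma$ on $X^\reg$; since $\Omega^{[p]}_X = j_*\Omega^p_{X^\reg}$ is determined by its restriction to $X^\reg$, this identifies $\beta$ with $\sigma$ and shows the map $\Omega^{[p]}_X\to\Omega^p_\h|_X$ is surjective, completing the equality.

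The main obstacle is entirely concentrated in the extension step: producing $\pi^{[*]}\sigma$ as a pole-free form on $X'$ is precisely the deep content of the GKKP extension theorem, and it is what forces the klt hypothesis. The surrounding descent and sheaf-theoretic bookkeeping is formal given the machinery already developed. I should also take care that these constructions are compatible with localization on $X$ so that they glue to an isomorphism of Zariski sheaves $\Omega^{[p]}_X\cong\Omega^p_\h|_X$, rather than just an equality of global sections; this follows because every step — resolution, extension, and descent — may be carried out over an arbitrary open $U\subset X$ and is compatible with further restriction.
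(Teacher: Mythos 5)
Your reduction to the inclusion $\Omega^{[p]}_X\subset\Omega^p_\h|_X$ and your use of the GKKP extension theorem to produce the candidate form $\alpha=\pi^{[*]}\sigma$ on a resolution $X'$ match the first half of the paper's argument. But the descent verification is where your proof breaks down, and it is not a repairable bookkeeping issue. The scheme $X'\times_X X'$ is in general reducible: besides the ``diagonal'' component (the closure of $\pi^{-1}(U)\times_U\pi^{-1}(U)\cong U$ over the locus $U$ where $\pi$ is an isomorphism), it has components supported over the exceptional locus, typically of the form $E\times_Z E$ where $E=\pi^{-1}(Z)$. On the resolutions of \emph{those} components the two compositions $\mathrm{pr}_1\circ\phi$ and $\mathrm{pr}_2\circ\phi$ do not agree on any dense open set, so the injectivity-at-the-generic-point argument (Lemma~\ref{cor_injdominant}) gives you nothing there. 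Concretely, on a component mapping onto $E\times E$ over a point of $Z$, the condition $\mathrm{pr}_1^*(\alpha|_E)=\mathrm{pr}_2^*(\alpha|_E)$ forces $\alpha|_E=0$ for $p\geq 1$, which is an extra, non-formal constraint on the extended form.

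That this constraint can genuinely fail is exactly the content of Example~\ref{ex_notrefl}: for the cone $X$ over an elliptic curve (which is log canonical, so the GKKP extension theorem still applies and $\Omega^{[1]}(X)=\Omega^1(\tilde X)$), the extended form restricts to a nonzero form on the exceptional curve and therefore does \emph{not} satisfy $\h$-descent; indeed $\Omega^1_\h|_X\subsetneq\Omega^{[1]}_X$ there. Since your argument never uses klt beyond invoking GKKP, it would ``prove'' the theorem for this log canonical example as well, so it cannot be correct. The missing idea is the second use of the klt hypothesis: by Hacon--McKernan (\cite{HM07} Corollary 1.5) the fibers of a resolution $\pi:\tilde X\to X$ of a klt base space are rationally chain connected, and Theorem~\ref{thm_rcc} then gives $\Omega^p_\h(X)\xrightarrow{\;\cong\;}\Omega^p_\h(\tilde X)=\Omega^p(\tilde X)$ directly; combined with $\Omega^{[p]}(X)=\Omega^p(\tilde X)$ from GKKP this yields the theorem. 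It is the rational chain connectedness of the fibers that kills the obstruction on the components over the exceptional locus, not a density argument.
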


\begin{rem}\label{rem_toric}
Theorem~\ref{thm_klt-sing} applies in particular to normal toric varieties by Example~\ref{ex_toric}. In view of Corollary~\ref{cor_ehcdh} this case has already been proved in \cite{CHSWaWei00} Theorem 4.1.
\end{rem}

In order to deduce Theorem \ref{thm_klt-sing} from results on reflexive differential forms obtained by Greb, Kebekus, Kov\'acs and Peternell we need a result on the pullback of $\h$-differential forms under morphisms with rationally chain connected fibers proved in the next subsection, which may be of independent interest.

As an application we find a more conceptual proof of a recent result of Kebekus.

\begin{cor}[(\cite{Keb12})]\label{cor-keb-pullback}
If $f:X\to Y$ is a morphism between klt base spaces, then there is a natural pullback map
\[\Omega^{[p]}(Y)\to\Omega^{[p]}(X)\]
compatible with pullback in the smooth case. I.e.,
$\Omega^{[p]}$ is a presheaf on the category of klt base spaces.
\end{cor}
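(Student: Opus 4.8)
The plan is to observe that this corollary is essentially a formal consequence of Theorem~\ref{thm_klt-sing} combined with the functoriality of the $\h$-sheaf $\Omega^p_\h$. The key point is that reflexive differentials on a klt base space \emph{are} $\h$-differentials, and $\h$-differentials pull back along arbitrary morphisms automatically.

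First I would recall that $\Omega^p_\h$ is a sheaf, hence in particular a presheaf, on the big site $\Sch$. Therefore any morphism $f:X\to Y$ of schemes induces a pullback homomorphism $f^*\colon\Omega^p_\h(Y)\to\Omega^p_\h(X)$, and these maps compose functorially in $f$. This requires no hypothesis on the singularities of $X$ or $Y$; it is just the presheaf structure. Next, for a klt base space $X$, Theorem~\ref{thm_klt-sing} asserts the equality of Zariski sheaves $\Omega^p_\h|_X=\Omega^{[p]}_X$; taking global sections yields a natural identification $\Omega^p_\h(X)=\Omega^{[p]}(X)$. Since both $X$ and $Y$ are klt base spaces by assumption, I would apply this identification on both sides and transport $f^*$ to obtain the desired map $\Omega^{[p]}(Y)\to\Omega^{[p]}(X)$. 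Functoriality on the category of klt base spaces is then inherited verbatim from the functoriality of $\Omega^p_\h$ on $\Sch$, which is exactly the assertion that $\Omega^{[p]}$ is a presheaf there.

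For compatibility with the smooth case, I would note that when $X$ and $Y$ are smooth one has $\Omega^{[p]}_X=\Omega^p_X$ by Definition~\ref{defn_reflexive} and $\Omega^p_\h|_X=\Omega^p_X$ by Proposition~\ref{prop_properties}(\ref{it_smooth}) (equivalently Theorem~\ref{thm_hdescent}); moreover, since $\Omega^p_\h$ is the $\h$-sheafification of the presheaf $\Omega^p$ of K\"ahler differentials, the induced map $f^*$ on global sections restricts to the ordinary pullback of K\"ahler forms. Thus the constructed pullback genuinely extends the usual one.

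The only subtlety worth spelling out, and what I would treat as the sole genuine (if minor) point, is the \emph{naturality} of the identification $\Omega^p_\h(X)=\Omega^{[p]}(X)$: one must check that this is compatible across the morphism $f$, rather than merely an abstract isomorphism for each object separately. This is automatic because the identification of Proposition~\ref{prop_properties}(\ref{it_normal}) is realized concretely by restriction to the smooth locus, namely $\Omega^p_\h(X)\hookrightarrow\Omega^p_\h(X^{\reg})=\Omega^p(X^{\reg})=\Omega^{[p]}(X)$, and the presheaf pullback $f^*$ commutes with these restriction maps by functoriality on $\Sch$. There is thus no serious obstacle; all the mathematical content has already been absorbed into Theorem~\ref{thm_klt-sing}.
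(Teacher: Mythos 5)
Your proposal is correct and is exactly the argument the paper intends: the corollary is stated as an immediate consequence of Theorem~\ref{thm_klt-sing} together with the fact that $\Omega^p_\h$ is an $\h$-sheaf (hence a presheaf) on all of $\Sch$, and the paper gives no further proof. Your additional remarks on the naturality of the identification via restriction to the regular locus and on compatibility with the smooth case fill in precisely the details the paper leaves implicit.
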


The assumption on the type of singularities of $X$ in Theorem \ref{thm_klt-sing} seems to be optimal, since even for log canonical singularities the theorem fails.

\begin{ex}[(\cite{Keb12} Section 1.2)]\label{ex_notrefl}
Let $E\subset\mathbb{P}_{\mathbb{C}}^2$ be an elliptic curve and let $X\subset\mathbb{A}^3$ be the affine cone over $E$, with vertex $p\in X$ and projection map $\pi:X\smallsetminus\{p\}\to E$. Recall that $X$ has log canonical singularities. We claim that the inclusion
\[ \Omega^1_\h|_X\subsetneq\Omega^{[1]}_X\]
is strict. For a non-zero $\alpha\in H^0(E,\Omega^1_E)$ we get a non-zero element $\pi^*(\alpha)\in H^0(X\smallsetminus \{p\},\Omega^1)=\Omega^{[1]}(X)$. 
Suppose that $\pi^*(\alpha)=\alpha'\in\Omega^1_\h(X)\subset \Omega^{[1]}(X)$. Let $\phi:\tilde{X}\to X$ denote the blow-up of the vertex with exceptional set $\mathrm{Exc}(\phi)\cong E$. It is an $\A^1$-bundle over $E$ with $\alpha'$ the pull-back of $\alpha$.
 We find a contradiction by calculating
\[\alpha = \left(\phi^*\alpha'\right)|_{\mathrm{Exc}(\phi)} = \phi|_{\mathrm{Exc}(\phi)}^*\left(\alpha'|_{\{p\}}\right)=0. \]
\end{ex}

\subsection{Rationally chain connected fibrations} In this section, $k$ is an arbitrary field of characteristic $0$.

\begin{defn}\label{defn_rel-rcc}
Suppose that $k$ be algebraically closed. We say that a $k$-scheme $X$ is rationally chain connected, if there exists a family $g:U\to B$ of proper curves together with a morphism $U\to X$ such that
\begin{enumerate}
 \item the fiber $U_b$ over any $k$-valued point $b$ of $B$ is connected and has only rational irreducible components; and
 \item the morphism $U\times_BU\to X\times X$ is dominant.
\end{enumerate}
\end{defn}

\begin{lemma}[(\cite{Kol96} Corollary IV.3.5.1)]\label{lem_rcc-get-all-points}
Suppose that $k$ is algebraically closed. Let $X$ be a proper and rationally chain connected scheme. Then for arbitrary closed points $x_1,x_2\in X$ there exists a proper connected curve $C$ with rational irreducible components together with a morphism $C\to X$ whose image contains $x_1$ and $x_2$.
\end{lemma}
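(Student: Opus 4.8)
The statement to prove is Lemma~\ref{lem_rcc-get-all-points}, which is attributed to Koll\'ar (\cite{Kol96} Corollary IV.3.5.1). Since it is cited from the literature, the expected ``proof'' is really a derivation from the definition of rational chain connectedness together with standard facts about rational curves. Let me sketch how I would extract the two-point statement from the family-theoretic Definition~\ref{defn_rel-rcc}.

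\begin{proof}[Proof sketch]
The plan is to deduce the two-point connectivity statement from the existence of the dominating family $g:U\to B$ with $U\to X$ provided by Definition~\ref{defn_rel-rcc}, by a specialization argument. First I would observe that the hypothesis that $U\times_B U\to X\times X$ is dominant means a ``very general'' pair of points of $X$ is joined by a connected chain of rational curves arising as a fiber $U_b$; the content of the lemma is to upgrade this from very general pairs to \emph{arbitrary} pairs of closed points $x_1,x_2$. The key tool is that $X$ is \emph{proper}, so that rational curves (and chains of them) specialize well: a limit of chains of rational curves is again a chain of rational curves. This is where properness is essential and is the technical heart of the argument.

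Concretely, I would consider the space of chains of rational curves in $X$ that connect two marked points, set up as a family over an appropriate parameter scheme, and use the fact that the relation ``$x_1$ and $x_2$ lie on a connected chain of rational curves'' is closed on $X\times X$ in the proper setting. Since by the dominance assumption this relation holds on a dense subset of $X\times X$, and since its graph is closed, it must hold on all of $X\times X$. The main obstacle is making precise the properness and closedness of the locus of ``chain-connectable'' pairs; this requires the boundedness of the family of rational curves of bounded degree on $X$ (relative to a fixed polarization after compactifying suitably) and the fact that $\overline{M}_{0,n}$-type moduli of stable maps are proper, so that limits of connecting chains exist and remain connecting chains. These are exactly the ingredients packaged in Koll\'ar's book, and for this reason the cleanest route is to invoke \cite{Kol96} directly rather than reprove the specialization theory.

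I expect the hardest point to be the passage from the generic (dominant) statement to every pair of closed points, because one must control degenerations: a naive limit of a family of irreducible rational curves could a priori acquire non-rational components or break into disconnected pieces. Properness of $X$ together with the stable-map compactification rules this out, guaranteeing that the limiting curve $C$ is still proper, connected, and has only rational irreducible components, with its image containing both $x_1$ and $x_2$. Once this closedness is in hand, the conclusion is immediate: the ``connectable'' locus is a closed subset of $X\times X$ containing a dense open, hence all of $X\times X$, which yields the desired chain $C\to X$ through any prescribed $x_1,x_2$.
\end{proof}
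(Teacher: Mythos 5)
The paper offers no proof of this lemma at all: it is imported verbatim from Koll\'ar's book, with the citation \cite{Kol96} Corollary IV.3.5.1 serving as the entire justification -- exactly the route you yourself recommend at the end of your sketch. So there is nothing in the paper to compare your argument against except the reference. That said, your outline of what lies behind the citation is accurate: the dominance of $U\times_BU\to X\times X$ gives a dense constructible set of connectable pairs, and the real content is the closedness of the connectable locus, which rests on compactifying the parameter space of chains and knowing that flat limits of connected genus-zero curves (in a Chow- or stable-map-theoretic compactification) remain connected with rational components. One point worth stating more carefully if you were to write this out in full: a priori the set of ``chain-connectable'' pairs is only a countable increasing union of closed sets indexed by the degree or length of the chain, and closedness of the whole union is not automatic; here one is saved because Definition~\ref{defn_rel-rcc} hands you a \emph{single} finite-type family $g:U\to B$, so a fixed bounded family of chains already dominates $X\times X$ and only its (proper) compactification needs to be considered. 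Your sketch gestures at boundedness but does not make this reduction explicit; it is the one place where a naive ``closed relation containing a dense set'' argument could silently go wrong. Since the lemma is used as a black box in the proof of Lemma~\ref{lemma-rcfamily} (which, note, separately imposes uncountability of $k$ for a related countability issue), deferring to Koll\'ar as both you and the authors do is entirely appropriate.
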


\begin{lemma}\label{lemma_pullback-rcc-product}
Suppose that $k$ be algebraically closed. Let $C$ be a proper one-dimensional connected scheme all of whose irreducible components are rational curves, and let $X$ be an arbitrary scheme. Then the pullback by the first projection $p_X:X\times C\to X$ yields a bijection
\[ \Omega^p_\h(X) = \Omega^p_\h(X\times C)\]
for any $p\geq 0$.
\end{lemma}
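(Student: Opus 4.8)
Injectivity of $p_X^*$ holds for every $C$ and every $X$ by Lemma~\ref{cor_injdominant}, because $p_X\colon X\times C\to X$ is surjective, hence dominant. The plan is therefore to prove surjectivity, reducing first to the case $C=\P^1$ and then building the general case from it by normalisation.

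\textbf{The case $C=\P^1$.} First I would reduce to $X$ smooth. Choose $\h$-covers $X'\to X$ and $X''\to X'\times_X X'$ by smooth varieties as in Remark~\ref{rem_simple-formula}. Since $\h$-covers are stable under base change, $X'\times\P^1$ and $X''\times\P^1$ are smooth $\h$-covers of $X\times\P^1$ and of $(X\times\P^1)\times_{X\times\P^1}(X\times\P^1)=(X'\times_X X')\times\P^1$, so Remark~\ref{rem_simple-formula} applies to both $X$ and $X\times\P^1$ and yields a commutative ladder
\[\begin{CD}
0@>>>\Omega^p_\h(X)@>>>\Omega^p(X')@>>>\Omega^p(X'')\\
@.@VV p_X^* V@VV\cong V@VV\cong V\\
0@>>>\Omega^p_\h(X\times\P^1)@>>>\Omega^p(X'\times\P^1)@>>>\Omega^p(X''\times\P^1)
\end{CD}\]
in which the two left-hand terms are the kernels of the right-hand horizontal maps. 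Hence $p_X^*$ is an isomorphism as soon as the claim is known for the smooth varieties $X'$ and $X''$. For $X$ smooth we have $\Omega^p_\h=\Omega^p$ by Theorem~\ref{thm_hdescent}, and a direct computation settles it: the decomposition $\Omega^p_{X\times\P^1}=\bigoplus_{i+j=p}\mathrm{pr}_X^*\Omega^i_X\otimes\mathrm{pr}_{\P^1}^*\Omega^j_{\P^1}$ together with $H^0(\P^1,\Omega^j_{\P^1})=0$ for $j\ge 1$ gives, via the projection formula and flat base change along the proper map $\mathrm{pr}_X$, that $\mathrm{pr}_{X*}\Omega^p_{X\times\P^1}=\Omega^p_X$; taking global sections identifies $\Omega^p(X\times\P^1)$ with $\Omega^p(X)$ through $p_X^*$.

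\textbf{The general case.} Since $\Omega^p_\h$ ignores nilpotents (Proposition~\ref{prop_properties}(\ref{it_red})) and $(X\times C)_\red=X_\red\times_k C_\red$ as $k$ is perfect, I may assume $X$ and $C$ reduced. Let $\nu\colon\tilde C\to C$ be the normalisation; since every component of $C$ is rational, $\tilde C=\coprod_i\P^1$ and $\nu$ is finite. Let $Z\subset C$ be the finite reduced locus where $\nu$ is not an isomorphism and set $E_0=\nu^{-1}(Z)$. Then $(\tilde C\to C,\,Z\hookrightarrow C)$ is an abstract blow-up, and its base change along $X\times C\to C$ exhibits $(X\times\tilde C,\,X\times Z)$ as an abstract blow-up of $X\times C$ with $E=X\times E_0$. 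Applying Proposition~\ref{prop_blow-up-square}, using that $\Omega^p_\h$ takes disjoint unions to products together with the $\P^1$-case (giving $\Omega^p_\h(X\times\tilde C)=\prod_i\Omega^p_\h(X)$) and the identifications $X\times Z=\coprod_{z\in Z}X$, $X\times E_0=\coprod_{e\in E_0}X$ (the points of $Z$ and $E_0$ having residue field $k$), I obtain a Cartesian square
\[\begin{CD}
\prod_i\Omega^p_\h(X)@>>>\prod_{e\in E_0}\Omega^p_\h(X)\\
@AAA@AAA\\
\Omega^p_\h(X\times C)@>>>\prod_{z\in Z}\Omega^p_\h(X)
\end{CD}\]
which identifies $\Omega^p_\h(X\times C)$ with the families $\bigl((\beta_i)_i,(\gamma_z)_z\bigr)$ subject to $\beta_{i(e)}=\gamma_{\nu(e)}$ for all $e\in E_0$, where $i(e)$ denotes the component of $\tilde C$ containing $e$. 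The crux is to show that such a family must be constant, and this is where connectedness of $C$ enters: if two components meet at a point $z$, then $z\in Z$ and $\nu^{-1}(z)$ contains points on both corresponding copies of $\P^1$, so the two associated $\beta$'s agree with $\gamma_z$; as the incidence graph of the components of the connected curve $C$ is connected, all $\beta_i$ equal a single $\beta$, and then every $\gamma_z=\beta$. This family is precisely $p_X^*\beta$, which proves surjectivity. I expect this gluing step — together with the careful set-up of the blow-up square under base change — to be the main obstacle, the $\P^1$-computation being the source of the collapse that makes everything work.
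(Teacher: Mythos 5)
Your proposal is correct and follows essentially the same route as the paper: the $\mathbb{P}^1$-case via the decomposition of $\Omega^p_{X\times\mathbb{P}^1}$ after an $\h$-local reduction to smooth $X$, then the general case via the abstract blow-up $(C^\nu\to C,\,C_{\sing}\to C)$, the resulting Cartesian square of Proposition~\ref{prop_blow-up-square}, and connectedness of $C$. You merely spell out more explicitly the gluing over the incidence graph of components and the injectivity via Lemma~\ref{cor_injdominant}, which the paper leaves implicit.
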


\begin{proof}It suffices to check the assertion locally in the $h$-topology. In particular we may assume that $X$ is smooth. In the case $C=\mathbb{P}^1$ the assertion is an immediate consequence of Theorem~\ref{thm_hdescent} and the formula 
\[ \Omega^p_{X\times\mathbb{P}^1}=\text{pr}_X^*(\Omega^p_X)\oplus \bigl(\text{pr}_X^*(\Omega^{p-1}_X)\otimes \text{pr}_{\mathbb{P}^1}^*(\Omega^1_{\mathbb{P}^1})\bigr), \]
since the right hand summand has no non-zero global section.

In the general case, we may assume that $C$ is reduced. Let $C^\nu$ be the normalization of $C$ and let $C_{\sing}$ be the singular locus with its reduced scheme structure. Then $(C^\nu\to C, C_{\sing}\to C)$ is an abstract blow-up. We denote by $E=C^{\nu}\times_CC_{\sing}$ the inverse image of $C_{\sing}$ in $C^{\nu}$. The case $C=\mathbb{P}^1$ already treated above shows that the Cartesian blow-up square given by Proposition~\ref{prop_blow-up-square} is
\[\begin{CD}
 \prod_{D \in \mathrm{Comp}(C^{\nu})}\Omega^p(X)@>>>\prod_{e\in E}\Omega^p(X)\\
  @AAA@AAA\\
 \Omega^p_\h(X\times C)@>>>\prod_{x\in C_{\sing}}\Omega^p(X).
 \end{CD}\]
and the assertion follows using the connectedness of $C$.
\end{proof}

\begin{lemma}\label{lemma-rcfamily}
Suppose that $k$ is uncountable and algebraically closed. Let $f:X\to Y$ be a projective morphism between $k$-schemes such that
\begin{enumerate}
 \item the fiber $X_y$ over any closed point $y\in Y$ is a rationally chain connected $k$-scheme; and
 \item there exists a section $s:Y\to X$, i.e., $f\circ s=\id_Y$.
\end{enumerate}
Then there exists a one-dimensional proper connected scheme $C$ with rational irreducible components and a scheme $H$ together with a dominant morphism
\[ \psi:H\times Y \times C \to X \]
over $Y$ such that for some closed point $c\in C$ the map $\psi$ satisfies $\psi(b,y,c)=s(y)$ for all closed points $b\in B, y\in Y$.
\end{lemma}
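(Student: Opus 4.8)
The plan is to produce a single family of pointed connected rational chains in the fibres of $f$ whose total evaluation dominates $X$, and then to arrange it in the product shape $H\times Y\times C$. First I would reduce to the case that $X$ and $Y$ are irreducible: replacing $Y$ by an irreducible component and $X$ by a component dominating it changes nothing essential, since $f$ is surjective (it carries the section $s$). The natural parameter space is the relative moduli space $M\to Y$ of morphisms $g\colon C_0\to X_y$ from a connected proper nodal curve $C_0$ with rational components into the fibre $X_y=f^{-1}(y)$, constrained by $g(c_0)=s(y)$ at a fixed marked point $c_0\in C_0$ (equivalently a relative $\Hom$-scheme, or its compactification by stable maps in order to be proper over $Y$). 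It carries a universal evaluation morphism $M\times C_0\to X$ over $Y$, and, as the combinatorial type of $C_0$ and the multidegree vary, it decomposes into \emph{countably many} pieces $M_i\to Y$ of finite type, with domain curves $C_i$.

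The covering property comes from Kollár's Lemma~\ref{lem_rcc-get-all-points}. Since $f$ is projective, each fibre $X_y$ is proper, and by hypothesis rationally chain connected; hence any two of its closed points --- in particular $s(y)$ and an arbitrary $x\in X_y$ --- lie on one connected proper curve with rational irreducible components. Thus the union over all $i$ of the images of the evaluations $M_i\times C_i\to X$ is all of $X$. The same argument applies to the geometric generic fibre $X_{\bar\eta}$ over $\bar\eta=\Spec\overline{k(Y)}$ once one knows that $X_{\bar\eta}$ is again rationally chain connected; this holds because rational chain connectedness of every closed fibre forces it on the very general closed fibre, and hence, $k$ being uncountable, on the geometric generic fibre.

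The uncountability of $k$ enters to collapse the countably many combinatorial types to a single one. Over $\bar\eta$ each evaluation has constructible image, whose closure is a closed subvariety of $X_{\bar\eta}$; since an irreducible variety over an uncountable algebraically closed field is never a countable union of proper closed subvarieties, a single type $C:=C_{i_0}$ already has dominant pointed evaluation $H_{\bar\eta}\times_{\bar\eta}C\to X_{\bar\eta}$, where $H_{\bar\eta}$ is a finite-type parameter space over $\bar\eta$ and the marked point is sent to $s$. This is exactly a family of pointed chains of fixed type $C$ dominating the generic fibre.

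The remaining and main difficulty is to descend this generic family to the required \emph{product} $H\times Y\times C$ over all of $Y$, rather than to an arbitrary relative family $M_{i_0}\to Y$: a $Y$-morphism $Y\times C\to X$ over $Y$ is precisely a section of $M\to Y$, so giving $\psi$ means exhibiting an $H$-family of sections of $M\to Y$ whose images still dominate $X$, with the basepoint constraint $g(c_0)=s(y)$ persisting for every closed $y$. I would obtain this by spreading the generic family $H_{\bar\eta}$, the curve $C$ and the evaluation out over a dense open $Y^\circ\subseteq Y$, thereby realizing the product shape generically, and then invoking the properness of the stable-map compactification together with the section $s$ to extend the pointed family across the remaining fibres. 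The delicate point --- where rational chain connectedness is genuinely used beyond the fibrewise covering --- is precisely that the generic fibre is dominated by a family of pointed chains whose base is constant in the $Y$-direction; its resolution combines Kollár's chain constructions (to bound the chains into finitely many types), the uncountability of $k$ (to single out one type and to pass from the very general to the generic fibre), and a spreading-out/properness argument (to trivialize in the $Y$-direction and extend over all of $Y$). Finally, the resulting $\psi$ is dominant because it is so at the generic fibre and $X$ is irreducible, and $\psi(b,y,c_0)=s(y)$ by construction; this last feature is what, via Lemma~\ref{lemma_pullback-rcc-product}, will later allow one to identify pulled-back $\h$-forms with $f^{*}s^{*}$ of themselves.
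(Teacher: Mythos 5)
Your opening moves---parameterizing pointed rational chains by countably many finite-type Hom-type spaces, covering $X$ fibrewise via Lemma~\ref{lem_rcc-get-all-points}, and using the uncountability of $k$ to extract a single chain type $C$ whose pointed evaluation is dominant---are exactly the skeleton of the paper's proof, which uses the explicit chains $C_r\subset\P^r$ with marked point $c_r$ and the relative Hom-scheme $\mathit{Hom}_{Y,s}(Y\times C_r,X)$, a countable union of schemes of finite type by Koll\'ar's Theorem I.1.10. Your detour through the geometric generic fibre and the rational chain connectedness of the very general fibre is not needed: the paper argues directly with closed points of $X$ and the fact that an irreducible variety over an uncountable field is not a countable union of smaller constructible sets.

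The genuine gap is in your final step. You rightly flag the product shape of the domain as the delicate point, but your proposed fix---spread the generic family out over a dense open $Y^\circ\subset Y$ and then ``invoke the properness of the stable-map compactification together with the section $s$'' to extend over the rest of $Y$---does not work as stated. Properness of a stable-map space gives the valuative criterion, i.e.\ extension over a DVR after a finite base change; it does not produce a morphism defined on all of $H\times Y\times C$ from one defined over $Y^\circ$ (the locus of indeterminacy of the induced rational map to the moduli space can be nonempty). Worse, even where a limit exists, the limiting stable map has a degenerate domain curve, so the extended family no longer has the form $(\text{base})\times C$ with the \emph{fixed} curve $C$---and that fixed product structure is precisely what Lemma~\ref{lemma_pullback-rcc-product} requires in the proof of Theorem~\ref{thm_rcc}. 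The paper sidesteps the extension problem entirely: the parameter space $H_r=\mathit{Hom}_{Y,s}(Y\times C_r,X)$ by definition parameterizes morphisms from the \emph{fixed} scheme $Y\times C_r$ restricting to $s$ over $Y\times\{c_r\}$, so each parameter point is already a family with the correct domain; the images of the evaluations increase with $r$ (one may always append constant components), and uncountability is invoked once to find an $r$ and a finite-type open $H\subset H_r$ with dominant evaluation. Note also that for the intended application only dominance over $Y$ and the product-with-$C$ shape of the domain matter, so working with the evaluation $H\times C\to X$ for a finite-type piece $H$ of the relative Hom-scheme over $Y$ already suffices; there is no need to trivialize the family in the $Y$-direction at all.
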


\begin{proof}
First we introduce notation: For integers $r\geq 0, 1\leq i\leq r$, let $l_i\subset\mathbb{P}^r$ be the line defined by the equations $x_j=0$ for $0\leq j\leq r$ and $j\neq i,i-1$. We denote by $C_r\subset\mathbb{P}^r$ the union of all $l_i$, $1\leq i\leq r$ with its reduced scheme structure. Finally, let $c_r:=[1:0:\cdots :0]\in C_r$.

By Lemma~\ref{lem_rcc-get-all-points} any two closed points of $X$ lying in the same fiber $f^{-1}(y)$ over some closed point $y\in Y$ can be connected by a proper curve $C\subset X$ with rational irreducible components, and we may even assume $C\cong C_r$ for some $r\geq 0$. 

Let $H_r:=\mathit{Hom}_{Y,s}(Y\times C_r,X)$ be the relative $\mathit{Hom}$-scheme parameterizing morphisms $Y\times C_r\to X$ over $Y$ whose restriction to $Y\cong Y\times \{c_r\}\subset Y\times C_r$ coincides with $s$. Recall that $H_r$ is a countable union of schemes of finite type over $k$ by \cite{Kol96} Theorem 1.10. 

The above considerations show that the universal morphism
\[ \phi=\bigcup_r\phi_r:\bigcup_{r\geq 0}H_r\times Y\times C_r\to Y \]
is surjective on $k$-points. Since the field $k$ is uncountable and the image of $\phi_r$ is contained in that of $\phi_{r+1}$, there exists an $r\geq 0$ and an open subscheme $H\subset H_r$ of finite type over $k$ such that the morphism $H\times Y\times C_r\to Y$ is dominant, see \cite{Har77} Exercise V.4.15(c).
\end{proof}

\begin{thm}\label{thm_rcc}
Let $f:X\to Y$ be a surjective projective morphism between $k$-schemes such that the fiber over any $\overline{k}$-valued point of $Y$ is a rationally chain connected $\overline{k}$-scheme. Then the pullback map 
\[ \Omega^p_\h(Y)\xrightarrow{\isom}\Omega^p_\h(X) \] 
yields a bijection
for any $p\geq 0$.
\end{thm}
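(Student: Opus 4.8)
The plan is to reduce the statement to the concrete geometric input provided by Lemma~\ref{lemma-rcfamily} and the already-established invariance under crossing with a rational curve, Lemma~\ref{lemma_pullback-rcc-product}. First I would observe that both sides of the asserted bijection are compatible with base field extension and that the $\h$-sheafification commutes with the relevant limits, so by faithfully flat descent along $\overline{k}/k$ we may assume from the outset that $k$ is algebraically closed; moreover, since the statement is about the $\h$-sheaf $\Omega^p_\h$, and $\h$-sheaves are unchanged by passing to an uncountable extension of an algebraically closed field, we may further enlarge $k$ so as to assume $k$ is uncountable and algebraically closed, exactly the hypotheses under which Lemma~\ref{lemma-rcfamily} applies.

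Injectivity of $f^*$ is the easy half: since $f$ is surjective and projective, hence proper and surjective, Lemma~\ref{cor_injdominant} (applied to the dominant morphism $X\to Y$) gives at once that $f^*\colon\Omega^p_\h(Y)\hookrightarrow\Omega^p_\h(X)$ is injective. For surjectivity I would proceed as follows. Given $\beta\in\Omega^p_\h(X)$, I want to produce $\alpha\in\Omega^p_\h(Y)$ with $f^*\alpha=\beta$. The key device is the dominant morphism
\[ \psi\colon H\times Y\times C\to X \]
over $Y$ furnished by Lemma~\ref{lemma-rcfamily}, together with the distinguished point $c\in C$ through which $\psi$ restricts to a section $s$ of $f$. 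Pulling $\beta$ back along $\psi$ gives $\psi^*\beta\in\Omega^p_\h(H\times Y\times C)$. Now Lemma~\ref{lemma_pullback-rcc-product} tells us that crossing with the rational-components curve $C$ does not change $\h$-differentials, and a parallel (in fact simpler) argument shows the same for crossing with the smooth affine-type parameter space $H$ after further $\h$-localization; the upshot is that $\psi^*\beta$ is constant in the $C$-direction. Restricting to the slice $H\times Y\times\{c\}$ and using that $\psi|_{Y\times\{c\}}=s$ yields a class on $Y$ (after absorbing the $H$-factor), and this is the candidate $\alpha$.

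The main obstacle, and the step demanding the most care, is verifying that the class extracted from $\psi^*\beta$ really descends to a well-defined element of $\Omega^p_\h(Y)$ and that its pullback recovers $\beta$ rather than merely agreeing with $\beta$ after pullback to the cover $H\times Y\times C$. Concretely, since $\psi$ is only dominant and not itself an $\h$-cover, I would need to pass to an honest $\h$-cover dominating $X$ — for instance by taking a resolution of (a component of) the closure of the image of $\psi$ mapping onto $X$ — and then check the cocycle/compatibility condition on the fibre product of that cover with itself. Here the constancy in the $C$-direction, combined with the fact that any two points in a fibre of $f$ are joined by such a rational chain (Lemma~\ref{lem_rcc-get-all-points}), is exactly what forces the two pullbacks to a double fibre product to coincide, so that Remark~\ref{rem_simple-formula} produces the desired $\alpha$ on $Y$. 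Once $\alpha$ is constructed, the identity $f^*\alpha=\beta$ follows by pulling both sides back through $\psi$ and invoking the injectivity from Lemma~\ref{cor_injdominant} on the smooth cover, closing the argument.
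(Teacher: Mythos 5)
There is a genuine gap at the very first step of your surjectivity argument: Lemma~\ref{lemma-rcfamily} has as a hypothesis that $f$ admits a section $s:Y\to X$, and the morphism $\psi$ it produces is anchored at that section (the relative Hom-scheme used in its proof parametrizes maps restricting to $s$ on $Y\times\{c_r\}$). A surjective projective morphism with rationally chain connected fibers need not admit a section (e.g.\ a nontrivial Brauer--Severi fibration over a surface), so you cannot invoke the lemma as you do. The paper's proof deals with this by first reducing to the case of a section: it base changes along $f$ itself, replacing $f$ by $\mathrm{pr}_1:X\times_YX\to X$, which has the diagonal as a section, and uses the two exact descent sequences for the $\h$-covers $X\to Y$ and $X\times_YX\to X$ (together with injectivity of the vertical pullbacks) to transfer the conclusion back to $f$. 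This reduction is an essential ingredient and is absent from your plan.

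Two further points. First, your claim that "a parallel (in fact simpler) argument shows the same for crossing with the \ldots parameter space $H$" is false: $\Omega^p_\h(Y)\to\Omega^p_\h(H\times Y)$ is injective (Lemma~\ref{cor_injdominant}) but certainly not bijective for positive-dimensional $H$ --- already $\Omega^1(\A^1)\neq\Omega^1(\Spec k)$. Fortunately no such bijectivity is needed: in the commutative square
\[\begin{CD}
 \Omega^p_\h(H\times Y\times C)@<\psi^* <<\Omega^p_\h(X)\\
  @V incl^* VV@VV s^* V \\
 \Omega^p_\h(H\times Y\times\{c\})@<< p_Y^* <\Omega^p_\h(Y)
 \end{CD}\]
one only uses that $\psi^*$ is injective and $incl^*$ is bijective (the inverse of the projection pullback from Lemma~\ref{lemma_pullback-rcc-product}) to conclude that $s^*$ is injective. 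Second, your proposed construction of $\alpha$ by descending $\psi^*\beta$ through an auxiliary $\h$-cover and checking a cocycle condition --- which you yourself flag as the step "demanding the most care" --- is never actually carried out, and it is unnecessary: once $s^*$ is known to be injective, the identity $s^*\circ f^*=\id$ immediately gives that $f^*$ is bijective (for $\beta\in\Omega^p_\h(X)$ one has $s^*(\beta-f^*s^*\beta)=0$, hence $\beta=f^*s^*\beta$). As written, your proposal does not constitute a complete proof.
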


\begin{proof}
By base change we may assume that the field $k$ is algebraically closed and uncountable. 

Let us first reduce the proof to the case when $f:X\to Y$ admits a section $s:Y\to X$. To this end, we perform a base change by $f:X\to Y$. Writing $W=X\times_YX$ we find a commutative diagram
\[
\xymatrix{0 \ar[r] & \Omega^p_\h(X) \ar[r] & \Omega^p_\h(X\times_Y X) \ar[r] & \Omega^p_\h(X\times_YW) \\
0 \ar[r] & \Omega^p_\h(Y)\ar[r] \ar@{^{(}->}[u] & \Omega^p_\h(X) \ar[r]\ar@{^{(}->}[u] &\Omega^p_\h(W) \ar@{^{(}->}[u]}\]
whose rows are exact and whose vertical maps are injective because
they are induced hy $\h$-covers.
In particular the theorem holds for $f:X\to Y$ if it holds for the first projection $\text{pr}_1:X\times_YX\to X$, which admits a section.

From now on we assume that $f:X\to Y$  admits a section $s:Y\to X$. Let $\psi:H\times Y\times C\to X$ and $v\in V$ as in Lemma~\ref{lemma-rcfamily}. In particular we have a commutative diagram
\[\begin{CD}
 H\times Y\times C@>\psi >>X\\
  @A incl AA@AA s A \\
 H\times Y\times\{c\}@>> p_Y >Y
 \end{CD}\]
which yields a commutative diagram
\[\begin{CD}
 \Omega^p_\h(H\times Y\times C)@<\psi^* <<\Omega^p_\h(X)\\
  @V incl^* VV@VV s^* V \\
 \Omega^p_\h(H\times Y\times\{c\})@<< p_Y^* <\Omega^p_\h(Y).
 \end{CD}\]
Recall that by Lemma~\ref{cor_injdominant} the pullback $\psi^*$ under the dominant map $\psi$ is injective. Moreover $incl^*$ is bijective by Lemma~\ref{lemma_pullback-rcc-product}. We deduce that the pullback $s^*$ under the section $s$ is injective. By
\[ \id:\Omega^p_\h(Y)\xrightarrow{f^*}\Omega^p_\h(X)\xrightarrow{s^*}\Omega^p_\h(Y)\]
this proves the assertion.
\end{proof}

\begin{rem}
If $f$ and $Y$ are in addition smooth the theorem is  classical. In the case $k=\mathbb{C}$, $Y=pt$ and $X$ an irreducible projective variety with Kawamata log terminal singularities, the analogous result was proved in \cite{GKKP11} Theorem 5.1 for reflexive differentials.
\end{rem}

\subsection{Proof of Theorem \ref{thm_klt-sing}}

Let $\pi:\tilde{X}\to X$ be a resolution of singularities. The main result of \cite{GKKP11} states that $\Omega^{[p]}(X)=\Omega^p(\tilde{X})$. On the other hand, \cite{HM07} Corollary 1.5 asserts that all fibers of $\pi$ are rationally chain connected so that $\Omega^p_\h(X)=\Omega^p_\h(\tilde{X})=\Omega^p(\tilde{X})$ by Theorem \ref{thm_rcc}.

\section{Cohomology of differential forms}\label{sec_coho}

We now study cohomology of sheaves of differential forms in the $\h$-topology. We will first assemble some technical tools on the cohomology of $\h$-sheaves and then apply them to the case of differential forms.

\subsection{Cohomology of $\h$-sheaves}

\begin{prop}\label{prop_cohoehh}
Let $\Fh$ be a sheaf of $\mathbb{Q}$-vector spaces on $Sch_\h$, $X\in\Sch$ and $p\geq 0$. Then
\[ H^p_\h(X,\Fh) = H^p_\eh(X,\Fh)\ . \]
\end{prop}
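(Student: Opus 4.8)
The plan is to realize the comparison as a consequence of the Leray spectral sequence of a morphism of topoi. Since every $\eh$-cover is an $\h$-cover, the identity functor on $\Sch$ is continuous for the two topologies and induces a morphism of topoi $\epsilon\colon(\Sch_\h)^\sim\to(\Sch_\eh)^\sim$ whose pushforward $\epsilon_*$ merely regards an $\h$-sheaf as an $\eh$-sheaf; in particular $\epsilon_*\Fh=\Fh$, so that the right-hand side of the Proposition is $H^p_\eh(X,\epsilon_*\Fh)$. The spectral sequence
\[ E_2^{p,q}=H^p_\eh(X,R^q\epsilon_*\Fh)\Rightarrow H^{p+q}_\h(X,\Fh) \]
then reduces the whole statement to the claim that $R^q\epsilon_*\Fh=0$ for $q>0$.

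Recall that $R^q\epsilon_*\Fh$ is the $\eh$-sheafification of the presheaf $U\mapsto H^q_\h(U,\Fh)$. Hence it suffices to show that every class $\xi\in H^q_\h(X,\Fh)$ with $q>0$ becomes trivial after pullback along some $\eh$-cover of $X$. First I would compute $H^q_\h(X,\Fh)$ through the \v{C}ech-to-derived-functor spectral sequence of an $\h$-cover $\mathfrak U$ put into the normal form of Proposition~\ref{cor_normal-h}, namely $\{U_i\to\bar U\to X'\to X\}$ with $X'\to X$ a proper $\eh$-cover by a smooth variety, $\bar U\to X'$ finite surjective and $\{U_i\}$ a Zariski cover of $\bar U$. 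Both the Zariski cover and the proper $\eh$-cover $X'\to X$ are themselves $\eh$-covers, so their contributions are already accounted for by $\eh$-cohomology; the only genuinely new building block is the finite surjective cover $\bar U\to X'$.

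The heart of the matter is therefore the finite surjective case, and this is precisely where the hypothesis of $\Q$-coefficients enters. Refining as in the proof of Theorem~\ref{thm_hdescent}, I would reduce to a Galois cover $Y\to X$ of normal irreducible varieties with group $G$. Using that $\h$-sheaves send finite disjoint unions to products and are insensitive to nilpotents and universal homeomorphisms, one identifies the \v{C}ech complex of $\{Y\to X\}$ with the standard complex computing group cohomology $H^\bullet(G,\Fh(Y))$, the components of the iterated fibre products $Y\times_X\cdots\times_X Y$ being indexed by powers of $G$. Since $\Fh(Y)$ is a $\Q$-vector space and $|G|$ is invertible, the averaging operator $\tfrac1{|G|}\sum_{g\in G}g$ shows $H^q(G,\Fh(Y))=0$ for $q>0$, while $H^0(G,\Fh(Y))=\Fh(Y)^G=\Fh(X)$, compatibly with Lemma~\ref{lemma_Gdescent}. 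Thus a finite Galois cover contributes nothing in positive \v{C}ech degree.

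Feeding this back into the spectral sequence of the normal-form cover, the positive-degree $\h$-cohomology of $X$ is built entirely out of $\eh$-cohomological data, so any $\xi\in H^q_\h(X,\Fh)$ with $q>0$ already dies on the smooth $\eh$-cover $X'\to X$; this yields $R^q\epsilon_*\Fh=0$ and hence the Proposition. I expect the main obstacle to be the bookkeeping that converts the normal-form refinement into an honest vanishing statement for $R^q\epsilon_*\Fh$ — in particular, controlling the iterated fibre products of a finite cover carefully enough to identify the \v{C}ech complex with group cohomology — rather than the transfer argument itself, which is formal once the $\Q$-structure is available.
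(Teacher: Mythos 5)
Your overall framework (the morphism of topoi, the Leray spectral sequence, the reduction to $R^q\epsilon_*\Fh=0$ for $q>0$, and the use of $\Q$-coefficients to kill the contribution of finite Galois covers) matches the paper's proof, but there is a genuine gap in how you pass from \v{C}ech-level computations to the vanishing of $R^q\epsilon_*\Fh$ for \emph{all} $q>0$. The \v{C}ech complex of a single cover $\mathfrak U$ in normal form only controls the row $E_2^{\bullet,0}=\check H^\bullet(\mathfrak U,\Fh)$ of the \v{C}ech-to-derived-functor spectral sequence; for $q\geq 2$ the abutment $H^q_\h(X,\Fh)$ also receives contributions from $\check H^a(\mathfrak U,\underline{H}^{b}(\Fh))$ with $0<b<q$, i.e.\ from the unknown higher $\h$-cohomology of the iterated fibre products of the cover, and your argument gives no mechanism (induction, dimension shift) for handling these terms. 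The paper avoids the issue entirely by a dimension shift: embed $\Fh$ into an injective $\h$-sheaf $\mathcal I$ of $\Q$-vector spaces (which remains injective as a sheaf of abelian groups), so that $R^{q+1}\eta_*\Fh\cong R^{q}\eta_*(\mathcal I/\Fh)$ for $q\geq 1$; this reduces everything to the single statement $R^1\eta_*\Fh=0$, and in degree $1$ \v{C}ech cohomology honestly computes derived-functor cohomology (\cite{Milne} Cor.~III.2.10), so the normal-form analysis applies. Without this reduction your strategy does not close up.

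A second, more technical gap: the identification of the \v{C}ech complex of a Galois cover $Y\to X$ of normal irreducible varieties with the bar complex computing $H^\bullet(G,\Fh(Y))$ is not automatic when the cover is ramified. One only has a proper surjection $\coprod_{G^{n}}Y\to Y\times_X\cdots\times_XY$ through the normalization, whence $\Fh(Y\times_X\cdots\times_XY)$ is merely a subgroup of $\prod_{G^{n}}\Fh(Y)$; this suffices to identify $\check H^0$ with $\Fh(Y)^G$ (compare Lemma~\ref{lemma_Gdescent}) but not to identify the higher \v{C}ech cohomology of the actual fibre products with group cohomology, and it is not clear that your averaging homotopy preserves these subgroups. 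The paper sidesteps this by quoting Voevodsky's comparison $H^1_\qfh(X',\Fh)=H^1_\et(X',\Fh)$ for sheaves of $\Q$-vector spaces on the smooth scheme $X'$ (\cite{Voe96} Theorem 3.4.1), which packages the transfer/averaging argument; once you have performed the reduction to degree $1$, replacing your hand-made Galois computation by that comparison closes this gap as well.
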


\begin{proof}Let $\eta:\Sch_\h\to\Sch_\eh$ be the morphism of sites. 
We need to show $R^p\eta_*\Fh=0$ for $p\geq 1$. 

There exists a short exact sequence $0\to\Fh\to\mathcal{I}\to\mathcal{G}\to 0$ of $\h$-sheaves of $\Q$-vector spaces where $\mathcal{I}$ is an injective object in the category of $\h$-sheaves of $\Q$-vector spaces.  It is automatically injective as sheaf of abelian groups.
Since $R^{p+1}\eta_*\Fh\cong R^p\eta_*\mathcal{G}$ for $p\geq 1$ it suffices to show that $R^1\eta_*\Fh=0$ for all $\Fh$.

$R^1\eta_*\Fh$ is the $\eh$-sheafification of the presheaf $X\mapsto H^1_\h(X,\Fh)$. We only need to show that $H^1_\h(X,\Fh)=H^1_\eh(X,\Fh)$.
If $t$ is a Grothendieck topology on $\Sch$, then
by \cite{Milne} Corollary III.2.10 
\[ H^1_t(X,\Fh)=\lim_{\mathfrak{U}} \check{H}^1(\mathfrak{U}, \Fh)\ .\]
where $\mathfrak{U}$ runs through the system of $t$-covers of $X$. We
apply this to $t$ equal to $\h$, $\eh$, $\et$ (\'etale topology) and 
$\qfh$ (quasi-finite $\h$; covers are $\h$-covers which are quasi-finite).
We introduce the notation $t(X)$ for the system of $t$-covers of $X$.
By Proposition \ref{cor_normal-h}, any $h$-cover $\mathfrak{U}$ can
be refined by a composition of a $\qfh$-cover followed by an $\eh$-cover
$X'\to X$ with $X'$ smooth. Hence
\[H^1_\h(X,\Fh)=\lim_{\mathfrak{U}\in\h(X)}\check{H}^1(\mathfrak{U}, \Fh)
           =\lim_{X'\in\eh(X)}\lim_{\mathfrak{U}\in\qfh(X')}\check{H}^1(\mathfrak{U}, \Fh)
\]
with $X'$ smooth. By \cite{Voe96} Theorem 3.4.1 we have
\[ 
	\lim_{\mathfrak{U}\in\qfh(X')}\check{H}^1(\mathfrak{U}, \Fh)
		=H^1_\qfh(X',\Fh)=H^1_\et(X',\Fh)
		=  \lim_{\mathfrak{U}\in\et(X')}\check{H}^1(\mathfrak{U}, \Fh)
\]
because $X'$ is smooth and $\Fh$ a sheaf of $\Q$-vector spaces. This implies
\[ H^1_\h(X,\Fh)=\lim_{X'\in\eh(X)}\lim_{\mathfrak{U}\in\et(X')}\check{H}^1(\mathfrak{U}, \Fh)=\lim_{\mathfrak{U}\in\eh(X)}\check{H}^1(\mathfrak{U}, \Fh)
=H^1_\eh(X,\Fh) \]
because \'etale covers are $\eh$-covers.
\end{proof}

\begin{prop}[(Blow-up sequence)]\label{prop_blowup-cohom}
Let $(X',Z)$ be an abstract blow-up of $X$ with $E=X'\times_XZ$. Let $\Fh$ be an $\h$-sheaf. Then the blow-up sequence
\[\dots\to H^i_\h(X,\Fh)\to H^i_\h(X',\Fh)\oplus H^i_\h(Z,\Fh)\to H^i_\h(E,\Fh)\to H^{i+1}_\h(X,\Fh)\to \dots\]
is exact.
\end{prop}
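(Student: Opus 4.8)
The plan is to deduce the long exact sequence from the $\Ext$ long exact sequence attached to a short exact sequence of representable $\h$-sheaves, exploiting the identity $H^i_\h(X,\Fh)=\Ext^i(\Z_\h(X),\Fh)$ recalled in Section~\ref{sec:1}. Concretely, I would first prove that the complex of $\h$-sheaves
\[ 0 \to \Z_\h(E) \to \Z_\h(X')\oplus\Z_\h(Z)\to \Z_\h(X)\to 0, \]
with the outer maps induced by the projections $E\to X'$, $E\to Z$ and the structure maps $X'\to X$, $Z\to X$ (carrying the usual Mayer--Vietoris signs), is exact.

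For the exactness at $\Z_\h(X)$ and at the middle term, the key input is Proposition~\ref{prop_blow-up-square}: for every $\h$-sheaf $\Fh$ the blow-up square of sections is Cartesian, i.e. the sequence $0\to\Fh(X)\to\Fh(X')\oplus\Fh(Z)\to\Fh(E)$ is exact. Since $\Fh(T)=\Hom(\Z_\h(T),\Fh)$, this says precisely that applying $\Hom(-,\Fh)$ to the candidate complex yields a left-exact sequence for every $\Fh$; by the standard Yoneda criterion (a sequence $A\to B\to C\to 0$ is exact if and only if $0\to\Hom(C,\Fh)\to\Hom(B,\Fh)\to\Hom(A,\Fh)$ is exact for all $\Fh$) this is equivalent to right-exactness of $\Z_\h(E)\to\Z_\h(X')\oplus\Z_\h(Z)\to\Z_\h(X)\to 0$. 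For injectivity on the left I would note that $E\to X'$ is a closed immersion, being the base change of $Z\hookrightarrow X$, hence a monomorphism, so the presheaf map $\Z[E(-)]\to\Z[X'(-)]$ is injective; as sheafification is exact, the induced map $\Z_\h(E)\to\Z_\h(X')$ remains injective, and therefore so is $\Z_\h(E)\to\Z_\h(X')\oplus\Z_\h(Z)$.

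Finally, taking the long exact sequence of $\Ext$-groups associated with this short exact sequence of $\h$-sheaves gives
\[ \dots\to\Ext^i(\Z_\h(X),\Fh)\to\Ext^i(\Z_\h(X'),\Fh)\oplus\Ext^i(\Z_\h(Z),\Fh)\to\Ext^i(\Z_\h(E),\Fh)\to\Ext^{i+1}(\Z_\h(X),\Fh)\to\dots \]
which is the desired blow-up sequence after the identification $H^i_\h(-,\Fh)=\Ext^i(\Z_\h(-),\Fh)$. The one step I expect to require the most care is the Yoneda translation in the second paragraph: one must check that the maps in the representable complex are arranged so that the induced sequence of Hom-groups is exactly the Cartesian diagram of Proposition~\ref{prop_blow-up-square}, including the compatibility of the two projections out of $E$, rather than merely an isomorphic reformulation; everything else is formal homological algebra in the topos of $\h$-sheaves.
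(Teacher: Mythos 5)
Your argument is correct and is essentially the argument the paper relies on: the paper's proof simply cites Geisser's Proposition 3.2, whose strategy is exactly yours --- establish the short exact sequence $0\to\Z_\h(E)\to\Z_\h(X')\oplus\Z_\h(Z)\to\Z_\h(X)\to 0$ of representable $\h$-sheaves and take the long exact $\Ext(-,\Fh)$-sequence, using $H^i_\h(T,\Fh)=\Ext^i(\Z_\h(T),\Fh)$. Your derivation of exactness at the middle and right-hand terms from Proposition~\ref{prop_blow-up-square} via the co-Yoneda criterion, together with the monomorphism argument for injectivity on the left, is a complete and valid way to carry this out.
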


\begin{proof} The argument is given by Geisser in \cite{Gei06} Proposition 3.2 for the $\eh$-topology and can be applied to the $\h$-topology without changes.
\end{proof}

\begin{prop}\label{prop_sncoh}
Let $X\in \Sch$ with semi-normalization $X^\sn$ (see Definition \ref{defn_sn}) and let $\Fh$ be an $\h$-sheaf of abelian groups. Then
\[ H^i_\h(X,\Fh)=H^i_\h(X^\sn,\Fh)\ .\]
\end{prop}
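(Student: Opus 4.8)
The plan is to reduce the statement to a property of the semi-normalization already contained in Voevodsky's theory, namely that $X$ and $X^\sn$ become canonically isomorphic after $\h$-sheafification. Recall from Section \ref{sec:1} that for every $\h$-sheaf $\Fh$ one has $H^i_\h(X,\Fh)=\Ext^i(\Z_\h(X),\Fh)$, the $\Ext$ being computed in the category of $\h$-sheaves of abelian groups. The finite surjective map $\pi:X^\sn\to X$ induces $\Z_\h(\pi):\Z_\h(X^\sn)\to\Z_\h(X)$, and applying $\Ext^i(-,\Fh)$ recovers exactly the pullback on cohomology. Hence it suffices to prove that $\Z_\h(\pi)$ is an isomorphism; the conclusion then follows for all $i$ and all $\Fh$ at once.

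To identify $\Z_\h(X)$ with $\Z_\h(X^\sn)$ I would first pass to the level of sheaves of sets. Let $L(X)$ be the $\h$-sheafification of the representable presheaf $T\mapsto X(T)$, as in the proof of Proposition \ref{prop_sn}. Both $\h$-sheafification and the free abelian group functor are left adjoints, so their composite may be computed in either order; concretely $\Z_\h(X)$ is canonically the free abelian $\h$-sheaf on $L(X)$. Therefore it is enough to show that the canonical map $L(\pi):L(X^\sn)\to L(X)$ is an isomorphism of $\h$-sheaves of sets, and then apply the functorial free-abelian-group construction.

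The isomorphism $L(\pi)$ is precisely the input already exploited in Proposition \ref{prop_sn}. By the adjunction underlying \cite{Voe96} Proposition 3.2.10, the category $L(\Sch)$ of representable $\h$-sheaves satisfies $\Mor_{L(\Sch)}(L(X),L(Y))=\Mor_\Sch(RL(X),Y)$ with $RL(X)=X^\sn$. Since semi-normalization is idempotent, $(X^\sn)^\sn=X^\sn$, so $L(X)$ and $L(X^\sn)$ corepresent the same functor on $L(\Sch)$ and the natural map $L(\pi)$ is an isomorphism. Combined with the previous paragraph this yields $\Z_\h(X^\sn)\isom\Z_\h(X)$, and hence the proposition.

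I expect the only genuine subtlety to lie in the bookkeeping of the second paragraph: one must verify that $\h$-sheafification really commutes with passage to free abelian groups, so that $\Z_\h(X)$ is the free abelian $\h$-sheaf on $L(X)$, and that $L(\pi)$ is the morphism genuinely induced by $\pi$ rather than merely an abstract isomorphism of corepresented functors. Both are formal consequences of the adjunctions involved, but they are where care is needed; everything else is a direct transcription of the argument of Proposition \ref{prop_sn} from the case $p=0$ to arbitrary $\h$-sheaves and to higher cohomology.
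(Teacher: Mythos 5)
Your proposal is correct and follows essentially the same route as the paper: reduce via $H^i_\h(\cdot,\Fh)=\Ext^i(\Z_\h(\cdot),\Fh)$ to showing $\Z_\h(X^\sn)\to\Z_\h(X)$ is an isomorphism, which in turn follows from $L(X^\sn)\to L(X)$ being an isomorphism of $\h$-sheaves of sets. The only difference is that the paper obtains this last isomorphism by citing \cite{Voe96} Theorem~3.2.9 directly, which asserts exactly that statement, so your Yoneda-style detour through Proposition~3.2.10 and the idempotency of semi-normalization (together with the bookkeeping you flag) is sound but unnecessary.
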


\begin{proof}
We take up the notation of the proof of Proposition \ref{prop_sn}, i.e,
let $\tilde{X}$ be the presheaf $T\mapsto X(T)$ and $L(X)$ its
$\h$-sheafification. By \cite{Voe96} Theorem 3.2.9, the semi-normalization
induces an isomorphism of sheaves of sets
\[ L(X^\sn)\isom L(X)\ .\]
This implies an isomorphism of $\h$-sheaves of abelian groups
\[\Z_\h(X^\sn)\to \Z_\h(X)\ .\]
Hence
\[ H^i_\h(X,\Fh)=\Ext^i(\Z_\h(X),\Fh)\isom\Ext^i(\Z_\h(X^\sn),\Fh)=H^i_\h(X^\sn,\Fh)\ . \]
\end{proof}

\subsection{Cohomology of $\h$-differential forms}

\begin{cor}\label{lem_sncoh}Let $X\in \Sch$ and $X^\sn$ be the semi-normalization (see Definition \ref{defn_sn}) of $X$. Then
\[ H^i_\h(X,\Omega^p_\h)=H^i_\h(X^\sn,\Omega^p_\h)\ .\]
\end{cor}
\begin{proof} This is a special case of Proposition \ref{prop_sncoh}.
\end{proof}

\begin{cor}[Smooth varieties]\label{cor_ehcohom}
If $X\in\Sm$, then
\[ H^i_\h(X,\Omega^p_\h) = H^i_\eh(X,\Omega^p_\eh) = H^i_\Zar(X,\Omega^p)\ .\]
\end{cor}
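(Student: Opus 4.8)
The plan is to establish the two equalities separately. The first, $H^i_\h(X,\Omega^p_\h)=H^i_\eh(X,\Omega^p_\eh)$, in fact holds for \emph{all} $X\in\Sch$: since $\mathrm{char}\,k=0$, the $\h$-sheaf $\Omega^p_\h$ is a sheaf of $\Q$-vector spaces, and it coincides with $\Omega^p_\eh$ by Theorem~\ref{thm_hdescent}. Thus the first equality is a direct application of Proposition~\ref{prop_cohoehh} to $\Fh=\Omega^p_\h$.

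For the second equality I would factor the comparison through the étale topology as $H^i_\eh(X,\Omega^p_\eh)=H^i_\et(X,\Omega^p)=H^i_\Zar(X,\Omega^p)$. The right-hand identification is standard: $\Omega^p_X$ is a coherent $\Oh_X$-module and quasi-coherent cohomology does not distinguish the étale from the Zariski topology (see \cite{Milne} III). The left-hand identification is the cohomological strengthening of Proposition~\ref{prop_smooth}; it is precisely the content of \cite{Gei06} Theorem~4.7, of which Proposition~\ref{prop_smooth} records only the case $i=0$, and one may simply cite that result.

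To argue it directly I would proceed by induction on $d=\dim X$, the case $d=0$ being trivial. By Proposition~\ref{cor_normal-h} and Construction~\ref{constr_eh-locally-smooth} every $\eh$-cover of a smooth variety refines to an étale cover composed with a sequence of blow-ups in smooth centers, so since $\Omega^p$ already satisfies étale descent (\cite{Milne} III Prop.~3.7) the only remaining gap between $\eh$- and étale cohomology comes from such blow-ups. For a single blow-up $X_Z\to X$ of a smooth variety along a smooth center $Z$, with exceptional divisor $E\to Z$ a projective bundle, the blow-up sequence of Proposition~\ref{prop_blowup-cohom} relates the $\eh$-cohomology of $X_Z$, $X$, $Z$ and $E$; as $\dim Z<d$ and $\dim E=d-1<d$, the inductive hypothesis identifies the terms for $Z$ and $E$ with their Zariski counterparts, and matching this sequence against the classical computation of $\Omega^p$ on blow-ups of smooth centers (\cite{Gros} Chapitre~IV) yields $H^i_\eh(X_Z,\Omega^p_\eh)\cong H^i_\Zar(X_Z,\Omega^p)$. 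Feeding this back into a \v{C}ech computation over the refined covers gives the desired isomorphism for $X$.

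The main obstacle is exactly this inductive step: controlling $\Omega^p$-cohomology under blow-ups in smooth centers, which is where the $\eh$-topology genuinely differs from the étale topology. Everything else — the reduction of a general $\eh$-cover to this situation, and the étale-versus-Zariski comparison for the coherent sheaf $\Omega^p$ — is formal. This step is the cohomological heart of \cite{Gei06} Theorem~4.7.
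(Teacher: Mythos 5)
Your proof is correct and takes essentially the same route as the paper: the paper's own proof is exactly the two-line deduction from Proposition~\ref{prop_cohoehh} (applicable since $\Omega^p_\h=\Omega^p_\eh$ is a sheaf of $\Q$-vector spaces) together with the citation of \cite{Gei06} Theorem~4.7 for the second equality. Your supplementary sketch of how Geisser's theorem is proved is a reasonable outline of what that citation contains, but it is not needed for the corollary.
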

\begin{proof} This follows from Proposition \ref{prop_cohoehh} and \cite{Gei06} Theorem 4.7.
\end{proof}
\begin{rem}This was first proved with a different argument in a version for $\Sm$ equipped with the $\h$-topology by Lee, see \cite{Lee09} Prop. 4.2. 
\end{rem}

\begin{cor}[(Cohomological dimension)]\label{cor_cohdim}Let $X\in\Sch$. Then
\[ H^i_\h(X,\Omega^p_\h)=0\hspace{2ex}\text{for $i>\dim X$.}\]
\end{cor}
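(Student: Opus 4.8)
The plan is to argue by induction on $d=\dim X$, using the blow-up sequence of Proposition~\ref{prop_blowup-cohom} to reduce the general case to the smooth case, where the statement is just Grothendieck vanishing.

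First I would settle the smooth case, which also serves as the base of the induction. If $X$ is smooth, then $H^i_\h(X,\Omega^p_\h)=H^i_\Zar(X,\Omega^p)$ by Corollary~\ref{cor_ehcohom}. Since $\Omega^p_X$ is in particular a sheaf of abelian groups on the Noetherian topological space $X$, whose Krull dimension is $\dim X$, Grothendieck's vanishing theorem gives $H^i_\Zar(X,\Omega^p)=0$ for $i>\dim X$. In particular, when $d=0$ the semi-normalization $X^\sn$ is a reduced zero-dimensional scheme over a field of characteristic zero, hence smooth, and Corollary~\ref{lem_sncoh} together with the smooth case yields $H^i_\h(X,\Omega^p_\h)=H^i_\h(X^\sn,\Omega^p_\h)=0$ for $i>0$.

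For the inductive step I would assume the statement for all schemes of dimension strictly less than $d$ and treat $X$ with $\dim X=d$. By Corollary~\ref{lem_sncoh} we may replace $X$ by $X^\sn$ and hence assume $X$ is reduced. Choose a resolution $\pi\colon X'\to X$ with $X'$ smooth (the disjoint union of resolutions of the irreducible components) and let $Z=X\ohne U$ be a reduced closed subscheme, where $U$ is a dense open subset over which $\pi$ is an isomorphism; one can arrange $Z$ to contain the singular locus of $X$, the pairwise intersections of the irreducible components, and the image of the exceptional locus, so that $\dim Z<d$. Then $(X',Z)$ is an abstract blow-up and $E=X'\times_X Z=\pi^{-1}(Z)$ is the complement of the dense open subset $\pi^{-1}(U)\isom U$ of $X'$, so that $\dim E<d$ as well. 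Feeding this abstract blow-up into Proposition~\ref{prop_blowup-cohom} gives the exact sequence
\[ H^{i-1}_\h(E,\Omega^p_\h)\to H^i_\h(X,\Omega^p_\h)\to H^i_\h(X',\Omega^p_\h)\oplus H^i_\h(Z,\Omega^p_\h). \]
For $i>d$ the term $H^i_\h(X',\Omega^p_\h)$ vanishes by the smooth case (since $\dim X'\leq d<i$), the term $H^i_\h(Z,\Omega^p_\h)$ vanishes by the inductive hypothesis (since $\dim Z<d<i$), and the term $H^{i-1}_\h(E,\Omega^p_\h)$ vanishes by the inductive hypothesis as well, because $i-1\geq d>\dim E$. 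Hence $H^i_\h(X,\Omega^p_\h)$ sits between two zero groups and therefore vanishes, completing the induction.

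There is no deep obstacle here; the only point requiring care is the bookkeeping of dimensions guaranteeing that the inductive hypothesis genuinely applies to $Z$ and $E$. Concretely, one must check that the complement $Z$ of the locus where $\pi$ is an isomorphism can be chosen of dimension $<d$ and that $E=\pi^{-1}(Z)$, being the complement of the dense open $\pi^{-1}(U)$ in $X'$, again has dimension $<d$; once this is in place, the index shift $i-1>\dim E$ in the blow-up sequence is exactly what makes the argument close up.
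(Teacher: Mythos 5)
Your proof is correct and follows essentially the same route as the paper: induction on dimension, reduction to the reduced (semi-normal) case via Corollary~\ref{lem_sncoh}, and the blow-up sequence of Proposition~\ref{prop_blowup-cohom} for an abstract blow-up $(X',Z)$ with $X'$ smooth and $\dim Z,\dim E<\dim X$, with the smooth case handled by Corollary~\ref{cor_ehcohom} and Grothendieck vanishing. The only difference is that you spell out the dimension bookkeeping and the base case explicitly, which the paper leaves implicit.
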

\begin{proof} We argue by induction on the dimension of $X$, the case $\dim(X)<0$ being trivial. By Corollary \ref{lem_sncoh} we may assume that $X$ is reduced so that there exists an abstract blow-up $(X',Z)$ such that $X'$ is smooth and $\dim(Z)<\dim(X)>\dim(Z\times_XX')$. Using the blow-up sequence \ref{prop_blowup-cohom} the claim for $X$ follows from that for the smooth scheme $X'$ done by Corollary \ref{cor_ehcohom} and the inductive hypothesis applied to $Z$ and $X'\times_X Z$.
\end{proof}

\begin{cor}[(Vanishing)]\label{cor_vanishing}
Let $X$ be a variety of dimension $d$. Then
\[ H^i_\h(X,\Omega^p_\h)=0\hspace{1cm}p>d\ .\]
\end{cor}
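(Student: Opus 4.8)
The plan is to mimic the inductive argument already used for the cohomological dimension bound in Corollary~\ref{cor_cohdim}, but trading the bound on the cohomological degree $i$ for a bound on the differential degree $p$. I would argue by induction on $d=\dim X$, the case $\dim X<0$ (the empty scheme) being trivial.

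For the inductive step I would first dispose of the smooth case. If $X$ is smooth, then $\dim X\le d$ forces $\Omega^p_X=0$ for every $p>d$, since on a smooth variety the sheaf of $p$-forms is the $p$-th exterior power of a locally free sheaf of rank at most the dimension and hence vanishes above it. Corollary~\ref{cor_ehcohom} then gives $H^i_\h(X,\Omega^p_\h)=H^i_\Zar(X,\Omega^p_X)=0$ for all $i$ and all $p>d$.

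In general, since $X$ is a variety and hence reduced, I would choose, exactly as in the proof of Corollary~\ref{cor_cohdim}, an abstract blow-up $(X',Z)$ with $X'$ smooth, $\dim Z<d$ and $\dim(E)<d$, where $E=X'\times_X Z$. Because $X'\to X$ is birational we have $\dim X'=d$, so the smooth case applies and $H^i_\h(X',\Omega^p_\h)=0$ for all $i$ and all $p>d$. Since $p>d>\dim Z$ and $p>d>\dim E$, the inductive hypothesis yields $H^i_\h(Z,\Omega^p_\h)=0$ and $H^i_\h(E,\Omega^p_\h)=0$ for all $i$. Feeding these vanishings into the blow-up sequence of Proposition~\ref{prop_blowup-cohom}, exactness at $H^i_\h(X,\Omega^p_\h)$ (whose two neighbouring terms $H^{i-1}_\h(E,\Omega^p_\h)$ and $H^i_\h(X',\Omega^p_\h)\oplus H^i_\h(Z,\Omega^p_\h)$ both vanish) forces $H^i_\h(X,\Omega^p_\h)=0$ for every $i$, completing the induction.

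The argument is essentially formal once these inputs are in place; the only point requiring genuine care is the bookkeeping of dimensions, namely that a resolution $X'\to X$ satisfies $\dim X'=d$ (so that top-degree smooth forms vanish precisely for $p>d$), while both $Z$ and the exceptional fibre product $E$ are strictly lower-dimensional (so that the inductive hypothesis is applicable to them). This is exactly the dimension estimate recorded in the proof of Corollary~\ref{cor_cohdim}, so I do not expect any genuinely new difficulty to arise.
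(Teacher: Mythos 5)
Your proof is correct and follows essentially the same route as the paper, which simply says the argument is the same as for the cohomological dimension bound (Corollary~\ref{cor_cohdim}): induction on dimension, the smooth case via Corollary~\ref{cor_ehcohom} and the vanishing of $\Omega^p_X$ for $p>\dim X$, and the blow-up sequence of Proposition~\ref{prop_blowup-cohom} for the inductive step. The only cosmetic point is that $E=X'\times_X Z$ need not be reduced, so strictly speaking one invokes Proposition~\ref{prop_sncoh} to pass to its reduction before applying the inductive hypothesis, exactly as in the proof of Corollary~\ref{cor_cohdim}.
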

\begin{proof}The argument is the same as for the cohomological dimension.
\end{proof}

\begin{cor}[(Finiteness)]Let $X$ be proper. Then $H^i_\h(X,\Omega^p_\h)$ is
finite dimensional.
\end{cor}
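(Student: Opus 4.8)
The plan is to imitate the proof of the cohomological dimension bound in Corollary~\ref{cor_cohdim}: run an induction on $\dim X$ and use the blow-up sequence to reduce to the smooth proper case, where finiteness is the classical finite-dimensionality of coherent cohomology on a proper scheme.

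First I would reduce to the case that $X$ is reduced. By Corollary~\ref{lem_sncoh} we have $H^i_\h(X,\Omega^p_\h)=H^i_\h(X^\sn,\Omega^p_\h)$, and since $X^\sn\to X$ is finite it is proper, so $X^\sn$ is again proper over $k$. Thus we may replace $X$ by $X^\sn$ and assume $X$ reduced.

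Next comes the induction on $d=\dim X$. The base case $d\le 0$ is covered by the smooth case treated below, since a reduced proper scheme of dimension $0$ over $k$ is smooth in characteristic zero. For $d>0$, resolution of singularities provides an abstract blow-up $(X',Z)$ with $X'$ smooth and $\pi:X'\to X$ proper birational, together with $E=X'\times_X Z$, satisfying $\dim Z<d$ and $\dim E<d$. Because $X$ is proper, so are $X'$ (proper over $X$), the closed subscheme $Z\subset X$, and $E\subset X'$. By Corollary~\ref{cor_ehcohom} the smooth proper variety $X'$ satisfies $H^i_\h(X',\Omega^p_\h)=H^i_\Zar(X',\Omega^p_{X'})$, which is finite-dimensional because $\Omega^p_{X'}$ is coherent (indeed locally free) and $X'$ is proper over $k$. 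By the inductive hypothesis the spaces $H^i_\h(Z,\Omega^p_\h)$ and $H^i_\h(E,\Omega^p_\h)$ are finite-dimensional for all $i$.

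Finally I would feed these into the blow-up sequence of Proposition~\ref{prop_blowup-cohom},
\[\dots\to H^{i-1}_\h(E,\Omega^p_\h)\to H^i_\h(X,\Omega^p_\h)\to H^i_\h(X',\Omega^p_\h)\oplus H^i_\h(Z,\Omega^p_\h)\to\dots\]
The image of $H^i_\h(X,\Omega^p_\h)$ in the finite-dimensional space $H^i_\h(X',\Omega^p_\h)\oplus H^i_\h(Z,\Omega^p_\h)$ is finite-dimensional, while its kernel coincides with the image of the map out of $H^{i-1}_\h(E,\Omega^p_\h)$ and is therefore a quotient of a finite-dimensional space. Hence $H^i_\h(X,\Omega^p_\h)$ is an extension of two finite-dimensional vector spaces and is itself finite-dimensional, completing the induction. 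The only point requiring care is the bookkeeping of properness across the blow-up square (so that the smooth case genuinely yields finite-dimensional coherent cohomology) and of the strict drop in dimension for $Z$ and $E$ that powers the induction; no genuine obstacle arises beyond these standard verifications.
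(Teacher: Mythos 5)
Your proof is correct and follows exactly the paper's strategy: induction on dimension via the blow-up sequence of Proposition~\ref{prop_blowup-cohom}, with the smooth proper case settled by coherence of $\Omega^p_{X'}$ through Corollary~\ref{cor_ehcohom}. The paper states this in two sentences; you have merely filled in the routine bookkeeping.
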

\begin{proof}The assertion holds for smooth proper $X$ because $\Omega^p_X$
is coherent. The general case follows by induction on the dimension from
the blow-up sequence.
\end{proof}
Recall the notion of an $\h$-hypercover, e.g. from \cite{HodgeIII} Definition 5.3.4.
It is a morphism of simplicial schemes 
\[ X_\bullet\to Y_\bullet\]
satisfying certain conditions which ensure that
\[ H^i_\h(Y_\bullet,\Fh)\to H^i_\h(X_\bullet,\Fh)\]
is an isomorphism for all $\h$-sheaves $\Fh$.

\begin{prop}\label{prop_hypercov}Let $X\in\Sch$ and $X_\bullet\to X$ an $\h$-hypercover such
that all $X_n$ are smooth. Then
\[ H^i_\h(X,\Omega^p_\h)\isom H^i_\Zar(X_\bullet,\Omega^p_{X_\bullet})\ .\]
There is a natural spectral sequence
\[ E_1^{nm}=H^m_\Zar(X_n,\Omega^p_{X_n})\Rightarrow H^{n+m}_\h(X,\Omega^p_\h)\ .\]
\end{prop}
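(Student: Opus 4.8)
The plan is to prove the two assertions of Proposition~\ref{prop_hypercov} in sequence, deriving the spectral sequence from the cohomology isomorphism. First I would set up the standard machinery for cohomology of simplicial schemes. For any $\h$-sheaf $\Fh$ and any simplicial scheme $X_\bullet$, there is a cohomology $H^i_\h(X_\bullet,\Fh)$ computed by a first-quadrant spectral sequence
\[ E_1^{nm}=H^m_\h(X_n,\Fh)\Rightarrow H^{n+m}_\h(X_\bullet,\Fh), \]
with $E_1$-differentials given by the alternating sums of the coface maps. Applying this with $\Fh=\Omega^p_\h$ and using that each $X_n$ is smooth, Corollary~\ref{cor_ehcohom} identifies $H^m_\h(X_n,\Omega^p_\h)=H^m_\Zar(X_n,\Omega^p_{X_n})$; this already yields the desired spectral sequence once the abutment is matched with $H^{n+m}_\h(X,\Omega^p_\h)$.

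The heart of the argument is therefore the first assertion, the isomorphism
\[ H^i_\h(X,\Omega^p_\h)\isom H^i_\h(X_\bullet,\Omega^p_\h), \]
after which the right-hand side is computed in the Zariski topology on each smooth level by Corollary~\ref{cor_ehcohom}. By the defining property of an $\h$-hypercover recalled just above the statement, the augmentation $X_\bullet\to X$ induces an isomorphism $H^i_\h(X,\Fh)\isom H^i_\h(X_\bullet,\Fh)$ for \emph{every} $\h$-sheaf $\Fh$; I would simply invoke this with $\Fh=\Omega^p_\h$. Concretely, one resolves $\Omega^p_\h$ by an injective $\h$-sheaf complex $\Ih^\bullet$, forms the associated double complex $\Gamma(X_n,\Ih^m)$, and uses the cohomological descent property of the hypercover to identify the totalization with $R\Gamma_\h(X,\Omega^p_\h)$.

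Combining these, I would write the isomorphism $H^i_\h(X,\Omega^p_\h)\isom H^i_\h(X_\bullet,\Omega^p_\h)$ and then replace each level by its Zariski cohomology via Corollary~\ref{cor_ehcohom}, obtaining both the stated identification with $H^i_\Zar(X_\bullet,\Omega^p_{X_\bullet})$ and the $E_1$-page $H^m_\Zar(X_n,\Omega^p_{X_n})$ of the descent spectral sequence. The main obstacle I anticipate is purely bookkeeping: making precise that the descent spectral sequence for the hypercover and the level-wise identification of $\h$-cohomology with Zariski cohomology are compatible, so that the $E_1$-differentials on the $\h$-side match those on the Zariski side. Since $\h$-cohomology and Zariski cohomology of $\Omega^p$ agree functorially on smooth schemes (the comparison in Corollary~\ref{cor_ehcohom} is induced by the morphism of sites), this compatibility is automatic, and no genuinely new input beyond cohomological descent for $\h$-hypercovers and the smooth comparison theorem is required.
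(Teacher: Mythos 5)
Your proposal is correct and follows essentially the same route as the paper: the paper's proof is precisely the combination of Corollary~\ref{cor_ehcohom} with the general cohomological descent formalism for hypercovers from \cite{HodgeIII} Section 5.3, which you have simply spelled out (descent spectral sequence, levelwise smooth comparison). The compatibility bookkeeping you flag is indeed routine, as the comparison map is induced by the morphism of sites.
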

\begin{proof} This follows from Corollary \ref{cor_ehcohom} and the general
descent formalism as explained e.g. in \cite{HodgeIII} Section 5.3.
\end{proof}

The blow-up sequence easily allows the computation of cohomology with coefficients in the canonical sheaf.

\begin{ex}\label{ex_topdegree}Let $X$ be a variety of dimension $d$. Let $X_1,\dots,X_n$ be the irreducible components of $X$ which have dimension $d$. Let $\tilde{X}_j\to X_j$ be a resolution. Then
\[ H^i_\h(X,\Omega^d_\h)=\bigoplus_{j=1}^nH^i_\Zar(\tilde{X}_j,\Omega_{\tilde{X}_i}^d)\ .\]
\end{ex}
\begin{proof}Let $X_{n+1},\dots,X_{N}$ be the irreducible components of $X$ whose dimension is strictly less than $d$. Let $X'=\coprod_{j=1}^NX_j$. Then there is an abstract blow-up $(\pi:X'\to X,Z)$ with $Z$ the locus where irreducible components intersect.
Both $Z$ and its preimage in $X'$ have dimension strictly smaller than $d$. 
By Corollary \ref{cor_vanishing} its cohomology with coefficients in $\Omega^d_\h$ vanishes. By the blow-up sequence this implies 
\[ H^i_\h(X,\Omega^d_\h)=H^i_\h(X',\Omega^d_\h)=\bigoplus_{j=1}^NH^i_\h(X_j,\Omega^d_\h)\ .\]
The components of dimension smaller than $d$ do not contribute by the same argument. Finally, let $Z_j\subset X_j$ be the locus where $\tilde{X}_j\to X_j$ is not an isomorphism. We use the blow-up sequence for $(\tilde{X}_j\to X_j,Z_j)$ to conclude.
\end{proof}

\subsection{The derived push-forward}\label{sec_rho}
After studying the $\h$-cohomology of $\h$-differential forms in the preceding subsection we systematically compare h-cohomology with Zariski cohomology.
From now on, we work systematically in the derived category of abelian Zariski-sheaves.

\begin{defn}Let
\[ \rho:\Sch_\h\to\Sch_\Zar\]
be the canonical morphism of sites. 
For $X\in\Sch$ we denote
\[ \rho_X:\Sch_\h\to X_\Zar\]
the inclusion of $X$ with the Zariski topology.
By abuse of notation, we also denote
\[ \rho:(\Sch_\h)^\sim\to(\Sch_\Zar)^\sim\ ,
   \hspace{2ex}\rho_X:(\Sch_\h)^\sim\to X_\Zar^\sim
 \]
the induced morphism of topoi.
\end{defn}
\begin{rem}There are also versions with the \'etale topology instead of
the Zariski topology. For our purposes it does not make a difference which
to use.
\end{rem}
Note that $\rho^*$ is nothing but the h-sheafification. It is exact. The functor $\rho_*$ is left exact.
We are going to
consider its right derived functor. In accordance with Definition \ref{defn_restrict} we write suggestively
\[ \Fh|_X=\rho_{X*}\Fh\ .\]

\begin{prop}\label{prop_smoothdb} Let $X$ be smooth. Then the adjunction map
\[ \Omega_X^p\to R\rho_{X*}\Omega^p_\h\]
is a quasi-isomorphism, i.e.,
\[ R^i\rho_{X*}\Omega_\h^p=\begin{cases}\Omega_X^p&i=0\\
                                        0&i>0.
			  \end{cases}\]
\end{prop}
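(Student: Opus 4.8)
The plan is to use the standard description of the derived push-forward along a morphism of sites: $R^i\rho_{X*}\Omega^p_\h$ is the Zariski sheaf on $X$ associated to the presheaf $U\mapsto H^i_\h(U,\Omega^p_\h)$, where $U$ ranges over the open subschemes of $X$. Since $X$ is smooth, every such $U$ is again smooth, which is exactly the situation in which the earlier comparison results apply. So the whole proposition reduces to understanding this cohomology presheaf for smooth $U$ and then sheafifying.

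First I would settle the case $i=0$. For smooth $U$, Theorem \ref{thm_hdescent} gives $H^0_\h(U,\Omega^p_\h)=\Omega^p_\h(U)=\Omega^p(U)=\Gamma(U,\Omega^p_X)$, and this identification is compatible with restriction maps between opens. Hence the presheaf $U\mapsto H^0_\h(U,\Omega^p_\h)$ is already the Zariski sheaf $\Omega^p_X$, so no sheafification is needed and $R^0\rho_{X*}\Omega^p_\h=\Omega^p_X$. Moreover the adjunction map $\Omega^p_X\to R\rho_{X*}\Omega^p_\h$ is the identity in degree $0$ under this identification.

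For $i>0$ I would invoke Corollary \ref{cor_ehcohom}, which for the smooth variety $U$ yields $H^i_\h(U,\Omega^p_\h)=H^i_\Zar(U,\Omega^p_U)=H^i_\Zar(U,\Omega^p_X|_U)$. Thus $R^i\rho_{X*}\Omega^p_\h$ is the Zariski sheafification of the presheaf $U\mapsto H^i_\Zar(U,\Omega^p_X|_U)$. To see that this sheafification vanishes I would check stalks: every point $x\in X$ has an affine open neighbourhood $U$, and since $\Omega^p_X$ is a coherent, hence quasi-coherent, $\Oh_X$-module, Serre vanishing gives $H^i_\Zar(U,\Omega^p_X|_U)=0$ for all $i>0$ on such affine $U$. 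Therefore the stalk of the associated sheaf at every point is zero, so $R^i\rho_{X*}\Omega^p_\h=0$ for $i>0$, as required.

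I do not expect a genuine obstacle here: the argument is entirely a matter of assembling two previously established facts, namely $\h$-descent on smooth varieties (Theorem \ref{thm_hdescent}, handling $i=0$) and the comparison $H^i_\h=H^i_\Zar$ on smooth varieties (Corollary \ref{cor_ehcohom}, handling $i>0$), together with the elementary vanishing of higher Zariski cohomology of a quasi-coherent sheaf on affines. The only point deserving a word of care is that the cohomology presheaf must be evaluated on \emph{all} opens $U$, which is legitimate precisely because openness preserves smoothness, so both cited results are available throughout.
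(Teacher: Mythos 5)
Your argument is correct and is essentially the paper's own proof, which simply states that the proposition ``is a reformulation of Corollary \ref{cor_ehcohom}''; you have just unpacked that reformulation by sheafifying the cohomology presheaf, using Theorem \ref{thm_hdescent} for $i=0$ and the comparison with Zariski cohomology plus vanishing of higher quasi-coherent cohomology on affines for $i>0$.
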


\begin{proof}
This is a reformulation of Corollary \ref{cor_ehcohom}.
\end{proof}

As in the absolute case this also allows us to compute $\rho_{X*}$ for singular spaces. Our main tool is again the blow-up sequence.

\begin{prop}[(Blow-up triangle)]\label{prop_blowuptriangle}
Let $(\pi:X'\to X,i:Z\to X)$ be an abstract blow-up of $X$ with $\pi':E=X'\times_XZ\to Z$. Let $\Fh$ be an h-sheaf. Then there is a natural distinguished triangle
\[ R\rho_{X*}\Fh\to R\pi_* R\rho_{X'*}\Fh\oplus i_* R\rho_{Z*}\Fh\to
 i_*R\pi'_{E*}R\rho_{E*}\Fh\xrightarrow{[1]}\ .\]
\end{prop}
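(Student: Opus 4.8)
The plan is to deduce the blow-up triangle for the derived push-forward directly from the blow-up sequence in $\h$-cohomology (Proposition~\ref{prop_blowup-cohom}), promoting that statement from the level of cohomology groups to the level of complexes in the derived category of Zariski sheaves on $X$. The essential point is that for an abstract blow-up $(\pi:X'\to X,i:Z\to X)$ the four schemes $X,X',Z,E$ all map to $X$, so each of the derived push-forwards $R\rho_{X*}\Fh$, $R\pi_*R\rho_{X'*}\Fh$, $i_*R\rho_{Z*}\Fh$ and $i_*R\pi'_{E*}R\rho_{E*}\Fh$ naturally lives in $D^+(X_\Zar)$, and I want to exhibit a distinguished triangle relating them.

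First I would set up the morphism of sites carefully: the schemes over $X$ involved in the abstract blow-up give a commutative diagram of sites refining $\rho_X$, and by functoriality of the derived push-forward one has canonical maps $R\rho_{X*}\Fh \to R\pi_*R\rho_{X'*}\Fh$ and $R\rho_{X*}\Fh\to i_*R\rho_{Z*}\Fh$ induced by pullback along $X'\to X$ and $Z\to X$, together with the analogous maps into $i_*R\pi'_{E*}R\rho_{E*}\Fh$. These assemble into a candidate complex, and I would define the third map in the triangle as the difference of the two natural maps, exactly mirroring the alternating-sign maps in the blow-up sequence. The second step is to check that this candidate is indeed a distinguished triangle. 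The cleanest way is to evaluate the whole diagram on an arbitrary open $U\subset X$ and take hypercohomology: restricting to $U$ and using that abstract blow-ups are stable under such base change, the long exact sequence obtained by applying $H^*$ is precisely the blow-up sequence of Proposition~\ref{prop_blowup-cohom} for the abstract blow-up $(X'\times_X U, Z\times_X U)$ of $U$. Since $U$ was arbitrary, the triangle is distinguished locally on $X$, hence distinguished in $D^+(X_\Zar)$.

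Concretely, I would realize the triangle via a mapping cone. Using injective (or $\rho_*$-acyclic, e.g. flasque $\h$-sheaf) resolutions $\Fh\to\Ih^\bullet$, each of the four terms is computed by applying the appropriate pushforward to $\Ih^\bullet$, and the maps are induced by the adjunction units. The total complex of the resulting double complex, with the alternating signs dictated by the abstract blow-up square $X'\sqcup Z \to X$ with intersection $E$, is exactly the cone realizing the stated triangle. The identity $(\text{diag}:X'\to X'\times_X X',\, E\times_Z E\subset X'\times_X X')$ being an $\h$-cover, already used in the proof of Proposition~\ref{prop_blow-up-square}, is what guarantees that the local sheaf condition underlying the blow-up square holds, so that the cone has the expected cohomology sheaves.

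The main obstacle I anticipate is not the formal construction of the maps but verifying exactness at the level of complexes rather than merely at the level of the hypercohomology long exact sequence: one must ensure that the naturally occurring maps genuinely fit into a distinguished triangle and are not merely compatible after taking cohomology. I would handle this by appealing to the general descent formalism for simplicial (Čech-type) resolutions associated to an abstract blow-up square, as invoked in Proposition~\ref{prop_hypercov}, which upgrades the cohomological blow-up sequence to a distinguished triangle functorially. A secondary technical point is base-stability: I must confirm that restricting the abstract blow-up to an open $U\subset X$ again yields an abstract blow-up and that $R\rho_{X*}$ commutes with this restriction, so that checking the triangle Zariski-locally on $X$ is legitimate; both follow from the fact that the morphisms $\pi,i,\pi'$ are compatible with open base change and that derived push-forward commutes with restriction to opens.
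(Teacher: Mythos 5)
The overall shape of your argument --- injective resolution, termwise push-forwards, totalization of the resulting square --- is the right one, and it is essentially what the paper means when it says the proposition ``can be proved using the same arguments as in \cite{Gei06} Proposition 3.2''. However, both of your proposed verifications have a genuine gap. Your first route (evaluate on opens $U\subset X$, observe that the resulting long sequences of hypercohomology groups are the blow-up sequences of Proposition~\ref{prop_blowup-cohom}, conclude that the triangle is distinguished) does not work as stated: a sequence $A\to B\to C$ whose hypercohomology over every open fits into \emph{some} long exact sequence need not be distinguished. What has to be shown is that the natural map $\cone(A\to B)\to C$ is a quasi-isomorphism, and to extract that from the five lemma you must know that the connecting maps of Proposition~\ref{prop_blowup-cohom} coincide with those coming from the cone; since you quote that proposition as a black box, this compatibility is precisely what is not available. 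Appealing to the hypercover descent formalism behind Proposition~\ref{prop_hypercov} does not repair this: an abstract blow-up square is not a hypercover, and identifying the totalization of the \v{C}ech nerve of $X'\sqcup Z\to X$ with the three-term complex in the statement is again the point at issue.

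Your second route (the mapping cone built from an injective resolution $\Fh\to\Ih^\bullet$) is the correct one, but the justification is incomplete at the decisive step. Writing $U'=\pi^{-1}U$, $U_Z=Z\cap U$ and $U_E=E\times_XU$ for an open $U\subset X$, acyclicity of the total complex requires the \emph{short} exactness of
\[ 0\to \Ih^q(U)\to\Ih^q(U')\oplus\Ih^q(U_Z)\to\Ih^q(U_E)\to 0 \]
for every term $\Ih^q$ of the resolution. The sheaf condition for the cover $\{X'\to X,\,Z\to X\}$ (Proposition~\ref{prop_blow-up-square}), which is all you invoke, only gives exactness at the first two spots; the surjectivity on the right is an extra input, is false for a general $\h$-sheaf, and is exactly where injectivity of $\Ih^q$ must be used. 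The missing ingredient --- the heart of Geisser's argument --- is the exactness of the sequence of representable $\h$-sheaves
\[ 0\to\Z_\h(U_E)\to\Z_\h(U')\oplus\Z_\h(U_Z)\to\Z_\h(U)\to 0\ , \]
to which one applies the exact functor $\Hom(-,\Ih^q)$; this yields the displayed short exact sequence in one stroke (left exactness and surjectivity together) and hence the distinguished triangle. Without it, your cone carries the cokernels $\mathrm{coker}\bigl(\Ih^q(U')\oplus\Ih^q(U_Z)\to\Ih^q(U_E)\bigr)$ as unaccounted-for cohomology.
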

\begin{proof}
As Proposition \ref{prop_blowup-cohom} this can be proved using the same arguments as in \cite{Gei06} Proposition 3.2.
\end{proof}

\begin{cor}\label{prop_rhocohdim}Let $X\in\Sch$ and $\pi:X_\bullet\to X$ a proper hypercover
with $X_n$ smooth for all $n$. Then
\[ R\rho_{X*}\Omega^p_\h=R\pi_*\Omega^p_{X_\bullet}\ .
\]
Moreover:
\begin{enumerate}
\item The complex is concentrated in degrees at most $\dim X$. 
\item It vanishes for $p>\dim X$.
\item All cohomology sheaves are coherent.
\end{enumerate}
\end{cor}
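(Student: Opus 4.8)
The plan is to establish the stated identity by cohomological descent and then to read off the three supplementary assertions from it together with the cohomological bounds already proved. First I would observe that a proper hypercover is in particular an $\h$-hypercover, since every proper surjective morphism is an $\h$-cover. Hence the cohomological descent formalism of \cite{HodgeIII} Section 5.3 --- the same machinery underlying Proposition~\ref{prop_hypercov}, but applied relatively to the morphism of topoi $\rho_X$ rather than to global sections --- yields a natural isomorphism
\[ R\rho_{X*}\Omega^p_\h \isom R\pi_*\left(R\rho_{X_\bullet*}\Omega^p_\h\right). \]
Since each $X_n$ is smooth, Proposition~\ref{prop_smoothdb} identifies $R\rho_{X_n*}\Omega^p_\h$ with $\Omega^p_{X_n}$ placed in degree $0$, so that $R\rho_{X_\bullet*}\Omega^p_\h=\Omega^p_{X_\bullet}$ levelwise. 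Combining the two gives the asserted identity $R\rho_{X*}\Omega^p_\h=R\pi_*\Omega^p_{X_\bullet}$.

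For the concentration and vanishing statements I would argue locally on $X$. The cohomology sheaf $R^i\rho_{X*}\Omega^p_\h$ is the Zariski sheafification of the presheaf $U\mapsto H^i_\h(U,\Omega^p_\h)$ on opens $U\subseteq X$. Since $\dim U\leq\dim X$, Corollary~\ref{cor_cohdim} gives $H^i_\h(U,\Omega^p_\h)=0$ for $i>\dim X$, which proves that the complex is concentrated in degrees at most $\dim X$. Likewise, applying Corollary~\ref{cor_vanishing} to each $U$ (passing to the reduction via Corollary~\ref{lem_sncoh} if necessary) shows that all these groups vanish when $p>\dim X$, so the entire complex vanishes in that range.

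Finally, for coherence I would exploit the main identity. Each $\Omega^p_{X_n}$ is coherent on the smooth variety $X_n$, and each structure map $\pi_n:X_n\to X$ is proper, so Grothendieck's coherence theorem makes every $R^m\pi_{n*}\Omega^p_{X_n}$ coherent. These assemble into the descent spectral sequence
\[ E_1^{n,m}=R^m\pi_{n*}\Omega^p_{X_n}\Rightarrow R^{n+m}\rho_{X*}\Omega^p_\h. \]
In a fixed total degree $i$ only the finitely many columns $n=0,\dots,i$ contribute, since $m=i-n\geq 0$ forces $n\leq i$. Therefore each $R^i\rho_{X*}\Omega^p_\h$ is a finite iterated extension of coherent sheaves, hence coherent.

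The main obstacle I anticipate is the bookkeeping around the descent isomorphism: one must make sure that cohomological descent for the a priori unbounded simplicial scheme $X_\bullet$ applies at the level of $R\rho_{X*}$ and not merely for global $\h$-hypercohomology, and that the resulting spectral sequence converges. The degree bound from the first supplementary statement is precisely what rescues the coherence argument, since it guarantees that in each fixed total degree only finitely many simplicial levels interact; without this truncation the infinite simplicial direction would obstruct a direct coherence conclusion.
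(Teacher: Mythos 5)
Your proposal is correct and follows essentially the same route as the paper: the main identity is the sheafification of Proposition~\ref{prop_hypercov} via Proposition~\ref{prop_smoothdb}, assertion (1) is the sheafified form of Corollary~\ref{cor_cohdim}, and (3) comes from coherence of $\Omega^p_{X_\bullet}$ plus properness of $\pi$ through the (first-quadrant, hence degreewise finite) descent spectral sequence. The only cosmetic difference is in (2), where you sheafify the already-proved Corollary~\ref{cor_vanishing} on opens while the paper re-runs the same dimension induction with the blow-up triangle at the level of $R\rho_{X*}$; both amount to the same argument.
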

\begin{proof}
This is the sheafification of Proposition \ref{prop_hypercov}.
The sheafification of Corollary~\ref{cor_cohdim} gives (1). Using an induction on dimension and the blow-up triangle in Proposition \ref{prop_blowuptriangle}, one can deduce (2) easily from Proposition \ref{prop_smoothdb}. Assertion (3) follows because
$\Omega^p_{X_\bullet}$ is coherent and $\pi$ proper.
\end{proof}
\begin{rem}By construction this says that the (shifted) $p$-th graded piece $\uOmega^p_X$ of the Du Bois complex is nothing but $R\rho_{X*}\Omega^p_\h$. See Section \ref{sec_dubois} for a more detailed discussion. In particular, all statements about
$R\rho_{X*}\Omega^p_\h$ in this section can be read as statements about
$\uOmega^p_X$.
\end{rem}

The methods also allow us to compute explicit cases.

\begin{cor}\label{cor_dnc} Let $X$ be a  normal crossing scheme (see Proposition \ref{prop_h-dnc}) . Then 
\[ \Omega^p_\h|_X\to R\rho_{X*}\Omega^p_\h\]
is a quasi-isomorphism. In particular,
\[ H^i_\h(X,\Omega^p_\h)\isom H^i_\Zar(X,\Omega^p_\h|_X)=H^i_\Zar(X,\Omega^p_X/\torsion)\ .\]
\end{cor}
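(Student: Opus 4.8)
The plan is to prove the quasi-isomorphism by showing that the higher derived push-forwards $R^i\rho_{X*}\Omega^p_\h$ vanish for $i>0$. Indeed, on $\mathcal{H}^0$ the adjunction map is the identity of $\rho_{X*}\Omega^p_\h=\Omega^p_\h|_X$, which equals $\Omega^p_X/\torsion$ by Proposition~\ref{prop_h-dnc}; so once the higher sheaves vanish the map $\Omega^p_\h|_X\to R\rho_{X*}\Omega^p_\h$ is automatically a quasi-isomorphism. Since this is an assertion about complexes of Zariski sheaves on $X$ it may be checked locally, and using the remark that it makes no difference whether $\rho_X$ is formed in the Zariski or the \'etale topology I would pass to an \'etale cover and reduce to the case that $X$ is a union of coordinate hyperplanes in affine space; in particular every irreducible component of $X$ is then smooth.

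I would then induct on the number $c(X)$ of irreducible components, exactly paralleling the proof of Proposition~\ref{prop_h-dnc}. If $c(X)\le 1$ the scheme $X$ is smooth and the claim is Proposition~\ref{prop_smoothdb}. For the inductive step, pick a component $D\subset X$, put $X'=\overline{X\smallsetminus D}$ and $W=X'\cap D$. Then $(X'\to X,\,D\to X)$ is an abstract blow-up with $X'\times_X D=W$, and $X'$, $W$ are again normal crossing schemes with strictly fewer components while $D$ is smooth. All three maps $X'\to X$, $D\to X$, $W\to D$ are closed immersions, so their push-forwards are exact and the blow-up triangle of Proposition~\ref{prop_blowuptriangle} reads
\[ R\rho_{X*}\Omega^p_\h\to R\rho_{X'*}\Omega^p_\h\oplus R\rho_{D*}\Omega^p_\h\to R\rho_{W*}\Omega^p_\h\xrightarrow{[1]} \]
with the evident closed push-forwards to $X$ suppressed.

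By Proposition~\ref{prop_smoothdb} and the inductive hypothesis the last two complexes are concentrated in degree $0$, with $\mathcal{H}^0$ equal to $\Omega^p_\h|_{X'}\oplus\Omega^p_\h|_D$ and $\Omega^p_\h|_W$. The long exact cohomology sequence of the triangle then gives $R^i\rho_{X*}\Omega^p_\h=0$ for $i\ge 2$ and identifies $R^1\rho_{X*}\Omega^p_\h$ with the cokernel of the restriction map $\Omega^p_\h|_{X'}\oplus\Omega^p_\h|_D\to\Omega^p_\h|_W$, while $\mathcal{H}^0$ is its kernel $\Omega^p_\h|_X$, consistently with the Cartesian blow-up square. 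The crux — and the only place where geometry really enters — is the surjectivity of this last map; this is precisely the exactness on the right of the sequence of torsion-free K\"ahler differentials used in the proof of Proposition~\ref{prop_h-dnc} (after rewriting each $\Omega^p_\h|_{(-)}$ as $\Omega^p_{(-)}/\torsion$ by that proposition), established there by the local computation with the Ferrand pull-back of \cite{Fer70}. Granting it, $R^1\rho_{X*}\Omega^p_\h=0$ and the induction closes.

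Finally, the quasi-isomorphism together with the Leray spectral sequence $H^i_\Zar(X,R^j\rho_{X*}\Omega^p_\h)\Rightarrow H^{i+j}_\h(X,\Omega^p_\h)$ collapses to give $H^i_\h(X,\Omega^p_\h)\isom H^i_\Zar(X,\Omega^p_\h|_X)$, and the identification $\Omega^p_\h|_X=\Omega^p_X/\torsion$ is Proposition~\ref{prop_h-dnc} once more. Thus I expect the entire difficulty to be concentrated in the surjectivity step inherited from Proposition~\ref{prop_h-dnc}, with the rest being formal bookkeeping with the blow-up triangle.
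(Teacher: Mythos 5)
Your proposal is correct and follows essentially the same route as the paper: reduce \'etale-locally to coordinate hyperplanes, induct on the number of components using the blow-up triangle for $(X'\to X, D\to X)$, invoke Proposition~\ref{prop_smoothdb} for the smooth component and the inductive hypothesis for $X'$ and $W$, with the only substantive input being the surjectivity of $\Omega^p_\h|_{X'}\oplus\Omega^p_\h|_D\to\Omega^p_\h|_W$ inherited from the exact row in the proof of Proposition~\ref{prop_h-dnc}. The paper packages this as a morphism of distinguished triangles rather than via the long exact sequence of derived push-forwards, but the content is identical.
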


\begin{proof} The assertion can be checked \'etale locally. Hence we may assume that $X$ is a union of coordinate hyperplanes in the affine space.

We prove the claim by induction on the number of irreducible components of $X$, the case $X=\emptyset$ being trivial. For an arbitrary irreducible component $E\subset X$ both $X':=\overline{X\backslash E}$ and $E':=E\cap X'$ are unions of coordinate hyperplanes in affine spaces with fewer irreducible components than $X$. Write $i_\bullet:\bullet\to X$ for the inclusion of a closed subscheme $\bullet$, so that the proof of Proposition \ref{prop_h-dnc} and the blow-up triangle yield a morphism of distinguished triangles
\[
\xymatrix{
\Omega^p_\h|_X \ar[r]\ar[d] & i_{X'*}\Omega^p_\h|_{X'}\oplus i_{E*}\Omega^p_\h|_{E} \ar[r]\ar[d] & i_{E'*}\Omega^p_\h|_{E'} \ar[r]^-{\text{+1}}\ar[d] & \\
R\rho_{X*}\Omega^p_\h \ar[r] & i_{X'*}R\rho_{X'*}\Omega^p_\h\oplus i_{E*}R\rho_{E*}\Omega^p_\h \ar[r] & i_{E'*}R\rho_{E'*}\Omega^p_\h \ar[r]^-{+1} &. 
}\]
The claim follows from the inductive hypothesis applied to $X'$ and $E'$ and Proposition \ref{prop_smoothdb} applied to $E$.

The second identification follows from Proposition \ref{prop_h-dnc}.
\end{proof}

The case of top degree  is particularly interesting. 

\begin{prop}\label{prop_topdb} Let $X$ be a variety of dimension $d$. Let $\pi:\tilde{X}\to X$
be a resolution, i.e., proper birational with $\tilde{X}$ smooth. Then
\[ R\rho_{X*}\Omega^d_\h=R\pi_*\Omega^d_{\tilde{X}}\ .\]
\end{prop}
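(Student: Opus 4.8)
The plan is to carry out the argument of Proposition~\ref{prop_omega-top-degree} one level up, in the derived category: I would replace the blow-up sequence there by the blow-up triangle of Proposition~\ref{prop_blowuptriangle} and feed in the derived vanishing supplied by Corollary~\ref{prop_rhocohdim}. Concretely, let $E\subset\tilde{X}$ be the reduced exceptional locus of $\pi$, let $i:Z:=\pi(E)\to X$ be the inclusion of its image with the reduced structure, and let $\pi':E=\tilde{X}\times_X Z\to Z$ be the restriction. Since $\pi$ is proper and birational it is an isomorphism over the dense open complement $X\smallsetminus Z$, so $(\pi:\tilde{X}\to X,\ i:Z\to X)$ is an abstract blow-up. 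As all sheaves in sight are insensitive to nilpotents, there is no harm in using reduced structures here.

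Next I would apply the blow-up triangle of Proposition~\ref{prop_blowuptriangle} to the $\h$-sheaf $\Fh=\Omega^d_\h$, which gives the distinguished triangle
\[ R\rho_{X*}\Omega^d_\h\to R\pi_* R\rho_{\tilde{X}*}\Omega^d_\h\oplus i_* R\rho_{Z*}\Omega^d_\h\to i_* R\pi'_{E*}R\rho_{E*}\Omega^d_\h\xrightarrow{[1]}. \]
Because $\tilde{X}$ is smooth, Proposition~\ref{prop_smoothdb} identifies $R\rho_{\tilde{X}*}\Omega^d_\h$ with $\Omega^d_{\tilde{X}}$, so the first summand of the middle term is exactly $R\pi_*\Omega^d_{\tilde{X}}$, the object we are after.

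The decisive point is then the vanishing of the two remaining terms. As $\pi$ is birational with $\tilde{X}$ irreducible of dimension $d$, the closed set $Z$ is a proper subset of $X$ and $E$ a proper subset of $\tilde{X}$, whence $\dim Z<d$ and $\dim E<d$. Since $d>\dim Z$ and $d>\dim E$, Corollary~\ref{prop_rhocohdim}(2) yields $R\rho_{Z*}\Omega^d_\h=0$ and $R\rho_{E*}\Omega^d_\h=0$. The triangle therefore collapses to the desired isomorphism $R\rho_{X*}\Omega^d_\h\xrightarrow{\ \sim\ }R\pi_*\Omega^d_{\tilde{X}}$. I do not expect a genuine obstacle here: the only thing needing care is the bookkeeping of the dimensions of $Z$ and $E$ so that the correct summands drop out, together with the mild point of first splitting off the irreducible components of dimension $<d$ (as in Example~\ref{ex_topdegree}) should one wish to allow $X$ to be reducible; everything else is a direct application of the two cited results.
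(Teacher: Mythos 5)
Your argument is correct and is essentially the paper's own: the paper constructs the comparison map and declares it a quasi-isomorphism "by the sheafification of Example~\ref{ex_topdegree}", and that example is proved exactly by the blow-up sequence for $(\tilde{X}\to X, Z\to X)$ together with the vanishing of $\Omega^d_\h$-cohomology on the lower-dimensional pieces $Z$ and $E$ — which is precisely what you spell out at the level of $R\rho_{*}$ using Proposition~\ref{prop_blowuptriangle} and Corollary~\ref{prop_rhocohdim}(2). Your handling of the reducible case and of reduced structures matches the paper's conventions, so no changes are needed.
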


\begin{proof}
The transformation $\rho_{X*}\to \pi_*\circ \rho_{\tilde{X}*}$ induces by Proposition \ref{prop_smoothdb} a map 
\[R\pi_*\Omega^d_{\tilde{X}}=R\pi_*R\rho_{\tilde{X}*}\Omega^d_\h\to R\rho_{X*}\Omega^d_\h,\]
which is a quasi-isomorphism by the sheafification of Example \ref{ex_topdegree}.
\end{proof}

Note that this formula is independent of the choice of resolution. This allows us to give an easy proof of a statement which does not involve $\h$-differentials at all. 

\begin{cor}\label{cor_birationalomega} Let $\pi:\tilde{X}\to X$ be a proper birational morphism between 
smooth connected varieties of dimension $d$. Then
\[ \Omega^d_X\to R\pi_*\Omega^d_{\tilde{X}}\]
is a quasi-isomorphism.
\end{cor}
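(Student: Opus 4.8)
The plan is to deduce the statement directly from the preceding Proposition~\ref{prop_topdb}, which is the real content; Corollary~\ref{cor_birationalomega} is essentially a specialization in which the base $X$ happens itself to be smooth. First I would observe that since both $X$ and $\tilde{X}$ are smooth of dimension $d$, I may invoke Proposition~\ref{prop_smoothdb} on each of them. On the source this tells me that $R\rho_{\tilde{X}*}\Omega^p_\h \cong \Omega^d_{\tilde{X}}$ (concentrated in degree $0$), and on the base that $R\rho_{X*}\Omega^d_\h \cong \Omega^d_X$. So the two objects appearing in the statement are exactly the smooth-case incarnations of $R\rho_{\tilde{X}*}\Omega^d_\h$ and $R\rho_{X*}\Omega^d_\h$.

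Next I would apply Proposition~\ref{prop_topdb} to the variety $X$ (which is smooth, hence in particular a variety of dimension $d$) using $\pi:\tilde{X}\to X$ as the chosen resolution. This is legitimate precisely because $\pi$ is proper and birational with $\tilde{X}$ smooth, which is the hypothesis of the corollary. The proposition yields a quasi-isomorphism
\[
R\rho_{X*}\Omega^d_\h \;\cong\; R\pi_*\Omega^d_{\tilde{X}}.
\]
Combining this with the identification $R\rho_{X*}\Omega^d_\h \cong \Omega^d_X$ from Proposition~\ref{prop_smoothdb} on the smooth base, I obtain a quasi-isomorphism $\Omega^d_X \to R\pi_*\Omega^d_{\tilde{X}}$. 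I would also want to check that the map produced this way agrees with the canonical adjunction map $\Omega^d_X\to R\pi_*\Omega^d_{\tilde X}$ stated in the corollary, which is a matter of tracing through the natural transformations $\rho_{X*}\to\pi_*\circ\rho_{\tilde{X}*}$ used in the proof of Proposition~\ref{prop_topdb}; by construction the map there is exactly the adjunction map composed with the identifications just named.

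The point I find most satisfying is that the corollary makes no reference to $\h$-differentials at all, yet the cleanest route passes entirely through them: the independence of $R\rho_{X*}\Omega^d_\h$ from the choice of resolution (already noted after Proposition~\ref{prop_topdb}) is what furnishes the quasi-isomorphism for free. I do not anticipate a genuine obstacle here, since all the heavy lifting — the top-degree descent computation behind Example~\ref{ex_topdegree} and its sheafification — has been done in Proposition~\ref{prop_topdb}. The only point requiring minor care is the compatibility of the maps, i.e.\ confirming that the quasi-isomorphism obtained is the adjunction map rather than merely \emph{some} isomorphism; this follows formally once one unwinds that both Proposition~\ref{prop_smoothdb} and Proposition~\ref{prop_topdb} are phrased via the same adjunction transformation.
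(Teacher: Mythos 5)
Your argument is exactly the paper's proof: Proposition~\ref{prop_smoothdb} gives $\Omega^d_X = R\rho_{X*}\Omega^d_\h$, and Proposition~\ref{prop_topdb} applied to the resolution $\pi$ gives $R\rho_{X*}\Omega^d_\h = R\pi_*\Omega^d_{\tilde{X}}$. Your additional remark about checking that the resulting isomorphism is the adjunction map is a sensible point of care that the paper leaves implicit.
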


\begin{proof}
We have $\Omega_X^d=R\rho_{X*}\Omega^d_\h=R\pi_* \Omega^d_{\tilde{X}}$ by Propositions \ref{prop_smoothdb} and \ref{prop_topdb}.
\end{proof}

\begin{rem}
Corollary \ref{cor_birationalomega} can be interpreted as a weak version of Grauert-Riemenschneider vanishing, see \cite{GR70} Satz 2.3. It is worth stressing that this vanishing results is {\em not} used in our proofs. A conceptual explanation may be that Kodaira type vanishing results are means of carrying over proper base change from singular cohomology to coherent cohomology. This base change is already incorporated in out $\h$-topology approach.

As the referee pointed out, it is tantalizing to generalize the theory
to positive characteristic. E.g. Corollary \ref{cor_birationalomega} would yield an alternative approach to the main results in \cite{CR11}. However, our approach does use resolution of singularities quite heavily. In characteristic zero, it would actually be possible to use de Jong's weaker version with alterations instead. Nevertheless, a translation to positive characteristic is far from obvious because of the existence of non-separable extensions.
\end{rem}

Recall e.g. from \cite{KM98} Definition 5.8 that a variety $X$ has {\em rational singularities} if
any resolution $\pi:\tilde{X}\to X$ is {\em rational}, i.e., if the adjunction map
\[ \Oh_X\to R\pi_*\Oh_{\tilde{X}} \]
is a quasi-isomorphism. 

\begin{prop}[\cite{KM98} Theorem 5.10]\label{prop_rational}
If some resolution is rational, then so is any other. In particular,
if $\pi:\tilde{X}\to X$ is a proper birational map between smooth varieties, then
\[ \Oh_X\to R\pi_*\Oh_{\tilde{X}}\]
is a quasi-isomorphism.
\end{prop}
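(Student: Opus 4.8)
The plan is to prove the two assertions in the reverse of the order stated: the ``in particular'' clause is the real content, so I would establish it first as a key lemma and then bootstrap to the independence statement. Thus let the key lemma read: for a proper birational morphism $\pi\colon\tilde X\to X$ between \emph{smooth} varieties, the adjunction map $\Oh_X\to R\pi_*\Oh_{\tilde X}$ is a quasi-isomorphism. Granting this, the first statement follows by the usual domination argument. Given two resolutions $f\colon Y\to X$ and $g\colon Z\to X$, I would choose a resolution $w\colon W\to X$ of the dominant component of $(Y\times_X Z)_\red$, so that $w$ factors as $W\xrightarrow{a}Y\xrightarrow{f}X$ and as $W\xrightarrow{b}Z\xrightarrow{g}X$ with $a$ and $b$ proper birational morphisms of smooth varieties. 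Applying the key lemma to $a$ yields $\Oh_Y\xrightarrow{\isom}Ra_*\Oh_W$, hence $Rw_*\Oh_W=Rf_*Ra_*\Oh_W=Rf_*\Oh_Y$ compatibly with the adjunction maps out of $\Oh_X$; therefore $w$ is rational if and only if $f$ is, and symmetrically $w$ is rational if and only if $g$ is. Thus $f$ is rational exactly when $g$ is.

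To prove the key lemma I would use Grothendieck--Serre duality together with Corollary~\ref{cor_birationalomega}. Working on one connected component we may assume $X$ and $\tilde X$ are smooth of pure dimension $d$, so that $\Omega^d$ is the canonical sheaf and Corollary~\ref{cor_birationalomega} becomes $\omega_X\xrightarrow{\isom}R\pi_*\omega_{\tilde X}$. Write $\mathbb{D}_X=\RHhom_X(-,\omega_X[d])$ for the dualizing functor, and similarly $\mathbb{D}_{\tilde X}$; since $\pi$ is proper, duality gives $R\pi_*\circ\mathbb{D}_{\tilde X}\isom\mathbb{D}_X\circ R\pi_*$ on $D^b_{\mathrm{coh}}$. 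Applied to $\Oh_{\tilde X}$, whose dual is $\mathbb{D}_{\tilde X}(\Oh_{\tilde X})=\omega_{\tilde X}[d]$, this reads
\[ R\pi_*\omega_{\tilde X}[d]\isom\mathbb{D}_X(R\pi_*\Oh_{\tilde X}). \]
By Corollary~\ref{cor_birationalomega} the left-hand side is $\omega_X[d]=\mathbb{D}_X(\Oh_X)$, so $\mathbb{D}_X(R\pi_*\Oh_{\tilde X})\isom\mathbb{D}_X(\Oh_X)$; since $X$ is smooth, $\mathbb{D}_X$ is an involution on $D^b_{\mathrm{coh}}(X)$, and applying it once more gives $R\pi_*\Oh_{\tilde X}\isom\Oh_X$. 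The ``in particular'' statement is precisely this, once one checks that the isomorphism is realized by the canonical adjunction map.

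The main obstacle is exactly this last point: the computation above only produces an abstract isomorphism of objects, and one must check that the \emph{canonical} map $\Oh_X\to R\pi_*\Oh_{\tilde X}$ is a quasi-isomorphism. The clean resolution is to note that under $\mathbb{D}_X$ the adjunction unit $\Oh_X\to R\pi_*\Oh_{\tilde X}$ corresponds to the Grothendieck trace $R\pi_*\omega_{\tilde X}[d]\to\omega_X[d]$, and this trace is a quasi-isomorphism because, via the canonical identification $\pi_*\omega_{\tilde X}=\omega_X$ and the vanishing $R^i\pi_*\omega_{\tilde X}=0$ for $i>0$, it is equivalent to the statement of Corollary~\ref{cor_birationalomega}. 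If one wished to bypass duality --- Corollary~\ref{cor_birationalomega} being a form of Grauert--Riemenschneider vanishing, whose use the paper otherwise avoids --- one could instead factor $\pi$ into blow-ups along smooth centres (weak factorization) and compute $R\pi_*\Oh_{\tilde X}=\Oh_X$ directly for a single such blow-up, or simply cite \cite{KM98} Theorem~5.10. Finally, note for consistency that the independence statement recovers the key lemma as the special case where $X$ is smooth and the resolution is $\id_X$, which is trivially rational.
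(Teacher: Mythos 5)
Your proposal is correct and is exactly the argument the paper has in mind: its proof consists of the single sentence that the statement is ``a standard consequence of Corollary~\ref{cor_birationalomega}'', and the standard consequence is precisely your combination of Grothendieck duality (turning the $\Omega^d$-statement into the $\Oh$-statement, with the adjunction unit dual to the trace) followed by the domination argument for two resolutions. You have simply written out in full what the paper leaves implicit, including the worthwhile check that the duality isomorphism is realized by the canonical adjunction map.
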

\begin{proof}This is a standard consequence of Corollary \ref{cor_birationalomega}. \end{proof}



\section{Application: De Rham cohomology and the Du Bois complex}\label{sec:7}

So far, we have considered the sheaves of $p$-forms separately for every $p$. We now turn our attention to the de Rham complex and show how Du Bois singularities and the relative de Rham cohomology fit into our framework of $\h$-sheaves.

\subsection{Cohomology of the complex of $h$-differentials}

Recall that for smooth varieties, 
$H^i_\Zar(X,\Omega^*_X)$ is called {\em algebraic de Rham cohomology}.
Following Deligne in \cite{HodgeIII}, the definition is extended to arbitrary varieties.

\begin{defn}Let $X\in\Sch$. Let $X_\bullet\to X$ be a proper hypercover such that all $X_n$ are smooth. We
define {\em algebraic de Rham cohomology} of $X$ by
\[ H^i_\dR(X)=H^i_\Zar(X_\bullet,\Omega^*_{X_\bullet})\ .\]
\end{defn}
\begin{rem}
\begin{enumerate}
\item By resolution of singularities such a hypercover exists.
\item If $k=\C$ and $X$ is proper, this is the de Rham component of the mixed
Hodge structure on $H^i_\sing(X^\an,\Q)$ defined by Deligne in \cite{HodgeIII}.
(Here $X^\an$ denotes the analytic space attached to $X$.) In particular,
$H^i_\sing(X^\an,\Q)\tensor_\Q\C$ carries a {\em Hodge filtration}
$F^pH^i_\sing(X^\an,\Q)\tensor_\Q\C$. 

\item For $X$ embeddable into a smooth variety, there is an alternative definition of algebraic de Rham cohomology by Hartshorne,  \cite{Har75}. 
It is in fact equivalent, see Remark \ref{geisserdeRham} below.
\end{enumerate}
\end{rem}

We now turn to $\h$-topology.

\begin{defn} Let $d:\Omega^p_\h\to\Omega^{p+1}_\h$ be the $\h$-sheafification of the exterior differential on $p$-forms. We call
\[ \Omega^*_\h\]
the {\em algebraic de Rham complex in the $\h$-topology}. We denote by
\[ H^i_\h(X,\Omega^*_\h)\]
its hypercohomology in the $\h$-topology. The stupid filtration
\[ F^p\Omega^*_\h=[\dots\to 0\to \Omega^p_\h\to\Omega^{p+1}_\h\to\dots]\subset \Omega^*_\h\]
is called {\em Hodge filtration}. 
\end{defn}

\begin{prop}[(de Rham cohomology)]\label{prop_deRham}
Let $X\in\Sch$. 
Then
\[ H^i_\dR(X)=H^i_\h(X,\Omega^*_\h)\ .\]
\end{prop}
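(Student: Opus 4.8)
The plan is to compare the two definitions of de Rham cohomology by reducing both to the same hypercohomology over a simplicial resolution. First I would invoke resolution of singularities to choose a proper hypercover $X_\bullet\to X$ with every $X_n$ smooth, exactly as in the definition of $H^i_\dR(X)$. Since $\eh$-covers (in particular proper $\eh$-covers, and hence the proper hypercover) are $\h$-covers, this is simultaneously an $\h$-hypercover, so the general $\h$-descent formalism recalled before Proposition~\ref{prop_hypercov} applies to the complex $\Omega^*_\h$ of $\h$-sheaves.

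The key step is to upgrade Proposition~\ref{prop_hypercov}, which treats each $\Omega^p_\h$ separately, to the full de Rham complex. Concretely, I would argue that for an $\h$-hypercover $X_\bullet\to X$ with smooth terms,
\[ H^i_\h(X,\Omega^*_\h)\isom H^i_\Zar(X_\bullet,\Omega^*_{X_\bullet}). \]
On the right-hand side this is precisely $H^i_\dR(X)$ by definition. To establish the isomorphism I would run the descent spectral sequence associated with the hypercover, now with the complex $\Omega^*_\h$ as coefficients. The associated-graded (with respect to the Hodge/stupid filtration $F^p\Omega^*_\h$) reduces the question to each individual graded piece $\Omega^p_\h$, where Proposition~\ref{prop_hypercov} (equivalently Corollary~\ref{cor_ehcohom} together with Proposition~\ref{prop_smoothdb}) gives
\[ H^m_\h(X_n,\Omega^p_\h)=H^m_\Zar(X_n,\Omega^p_{X_n}). \]
In particular $R\rho_{X_n*}\Omega^p_\h=\Omega^p_{X_n}$ for each smooth $X_n$, so the $\h$-hypercohomology of $\Omega^*_\h$ restricted to each simplicial level computes the Zariski de Rham hypercohomology of $X_n$. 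Assembling these level-wise identifications through the descent spectral sequence (or equivalently passing to $R\rho_{X*}$ and using the simplicial resolution) yields the global comparison.

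The main obstacle I anticipate is bookkeeping the compatibility of the differential $d$ with $\h$-sheafification across the two spectral sequences: one must check that the Hodge filtration $F^p\Omega^*_\h$ and the differential of the de Rham complex are respected by the descent isomorphisms, so that the level-wise equalities for the graded pieces genuinely assemble into an isomorphism of the filtered complexes and not merely of the associated graded. Once the comparison is known on graded pieces and the two spectral sequences are seen to be compatible with the filtration, a standard comparison-of-spectral-sequences argument (both degenerate or at least map isomorphically term by term) forces the abutments to agree, giving $H^i_\dR(X)=H^i_\h(X,\Omega^*_\h)$. A secondary, more routine point is independence of the choice of hypercover, which follows because any two such hypercovers admit a common refinement and $\h$-hypercovers induce isomorphisms on $\h$-hypercohomology.
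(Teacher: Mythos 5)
Your proposal follows the same route as the paper: pass to a proper hypercover $X_\bullet\to X$ with smooth terms, use the hypercohomology spectral sequence induced by the Hodge (stupid) filtration on $\Omega^*_\h$ to reduce to the individual graded pieces $\Omega^p_\h$, and apply Proposition~\ref{prop_hypercov} levelwise to identify the result with $H^i_\Zar(X_\bullet,\Omega^*_{X_\bullet})=H^i_\dR(X)$. The compatibility issues you flag (the differential and the filtration passing through sheafification and descent) are exactly the bookkeeping the paper's terse proof implicitly absorbs, so your argument is correct and essentially identical.
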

\begin{proof}Let $X_\bullet\to X$ be a proper hypercover with all $X_n$ smooth.
By the hypercohomology spectral sequence, i.e., the spectral
sequence induced by the Hodge filtration on $\Omega^*_\h$, and Proposition \ref{prop_hypercov}
\[ H^i_\h(X,\Omega^*_\h)=H^i_\h(X_\bullet,\Omega^*_\h)=H^i_\Zar(X_\bullet,\Omega^*)\ .\]
By definition, the right hand side is algebraic de Rham cohomology of $X$.
\end{proof}
\begin{rem}\label{geisserdeRham}Geisser in \cite{Gei06} uses the $\eh$-topology instead. (Recall that $\eh$-cohomology and $\h$-cohomology agree.) He shows in loc. cit. Theorem 4.10 that
$\eh$-cohomology of $\Omega^*_\eh$ agrees with Hartshorne's definition of algebraic de Rham cohomology in \cite{Har75}. So indeed, all three approaches give the same result.
\end{rem}

\begin{cor}Let $k=\C$. The embedding induces a natural isomorphism
\[ H^i_\h(X,\Omega^*_\h)\isom H^i_\sing(X_\C^\an,\C)\ .\]
\end{cor}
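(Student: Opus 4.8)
The plan is to combine the just-established comparison $H^i_\h(X,\Omega^*_\h)\isom H^i_\dR(X)$ from Proposition~\ref{prop_deRham} with the classical period isomorphism for smooth varieties. First I would reduce to the smooth case using a proper hypercover: choose $X_\bullet\to X$ with all $X_n$ smooth, so that by Proposition~\ref{prop_deRham} (and its proof) the left-hand side is $H^i_\Zar(X_\bullet,\Omega^*_{X_\bullet})$. On the topological side, the analytification $X_\bullet^\an\to X^\an$ is a proper hypercover in the analytic topology, hence by cohomological descent (Deligne, \cite{HodgeIII} Section~5.3) one has $H^i_\sing(X_\C^\an,\C)\isom H^i_\sing(X_{\bullet,\C}^\an,\C)$, where the right-hand side denotes the cohomology of the associated simplicial space.

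The key step is then the termwise period isomorphism. For each smooth $X_n$, the classical Grothendieck comparison theorem gives a natural isomorphism
\[ H^i_\Zar(X_n,\Omega^*_{X_n})\isom H^i_\sing(X_{n,\C}^\an,\C)\ ,\]
induced by integration of algebraic differential forms against singular chains (equivalently, by comparing the algebraic de Rham complex with the holomorphic one via GAGA and then with the constant sheaf $\C$ via the holomorphic Poincar\'e lemma). These isomorphisms are compatible with the face and degeneracy maps of $X_\bullet$ because the period pairing is functorial for morphisms of smooth varieties. Therefore they assemble into an isomorphism of the two spectral sequences computing the cohomology of the simplicial object, and passing to the abutment yields
\[ H^i_\Zar(X_\bullet,\Omega^*_{X_\bullet})\isom H^i_\sing(X_{\bullet,\C}^\an,\C)\ .\]
Concatenating the three displayed isomorphisms produces the desired comparison.

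The main obstacle is naturality and independence of the hypercover, rather than the termwise comparison itself, which is standard. One must check that the period isomorphism is genuinely functorial at the level of complexes (or at least at the $E_1$-page of the Hodge-to-de-Rham spectral sequence), so that it descends along the simplicial structure; this is where a careful appeal to Deligne's descent formalism in \cite{HodgeIII} is needed. A secondary point is to verify that both sides are independent of the chosen hypercover and that the resulting isomorphism is the natural one, but this follows formally since any two proper smooth hypercovers admit a common refinement and both cohomology theories satisfy descent. Given the machinery already set up in Proposition~\ref{prop_deRham} and the remark identifying $H^i_\dR(X)$ with Deligne's mixed Hodge construction, the argument is essentially a matter of recording the classical smooth period isomorphism and invoking descent.
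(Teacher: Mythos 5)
Your argument is correct and is essentially the paper's proof written out in full: the paper simply cites Proposition~\ref{prop_deRham} together with ``the period isomorphism between algebraic de Rham cohomology and singular cohomology,'' and that period isomorphism for general $X$ is constructed exactly as you describe, by termwise Grothendieck comparison on a smooth proper hypercover plus Deligne's cohomological descent. No gap; you have merely unpacked the citation.
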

\begin{proof}This is the period isomorphism between algebraic de Rham
cohomology and singular cohomology.
\end{proof}

\begin{thm}[(Hodge filtration)]\label{thm_degenerates}
Let $X\in\Sch$ be proper. Then the Hodge to de Rham spectral sequence 
\[ E^{ab}_1=H_\h^b(X,\Omega^a_\h)\Rightarrow H^{a+b}_\h(X,\Omega^*_\h)\]
degenerates at $E_1$.
Under the identification in Proposition \ref{prop_deRham}
the Hodge filtration $F^*\Omega^*_\h$ induces the Hodge filtration on $H^i_\dR(X)$. 
\end{thm}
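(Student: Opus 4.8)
The plan is to reduce the statement to Deligne's degeneration theorem in \cite{HodgeIII} by first reinterpreting the spectral sequence through a smooth proper hypercover. Choose a proper hypercover $X_\bullet\to X$ with all $X_n$ smooth, which exists by resolution of singularities. By Proposition \ref{prop_hypercov} applied to each $\Omega^a_\h$ and by the filtered version of the identification in Proposition \ref{prop_deRham}, the Hodge to de Rham spectral sequence of $\Omega^*_\h$ is canonically isomorphic to the Hodge to de Rham spectral sequence of the simplicial de Rham complex $\Omega^*_{X_\bullet}$:
\[ E_1^{ab}=H^b_\h(X,\Omega^a_\h)=H^b_\Zar(X_\bullet,\Omega^a_{X_\bullet})\Rightarrow H^{a+b}_\Zar(X_\bullet,\Omega^*_{X_\bullet})=H^{a+b}_\dR(X). \]
Since the isomorphism of Proposition \ref{prop_deRham} is induced by a filtered quasi-isomorphism respecting the stupid filtrations $F^*\Omega^*_\h$ and $F^*\Omega^*_{X_\bullet}$, the second assertion---that $F^*\Omega^*_\h$ induces the Hodge filtration on $H^i_\dR(X)$---follows at once from the corresponding statement for $X_\bullet$, which is Deligne's definition of the Hodge filtration. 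It therefore remains to prove $E_1$-degeneration of the right-hand spectral sequence.

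Next I would reduce to the case $k=\C$. The scheme $X$, together with a chosen smooth proper hypercover, is defined over some finitely generated subfield $k_0\subseteq k$; fix an embedding $k_0\hookrightarrow\C$. All the groups involved are finite-dimensional by the finiteness result above, and both the terms $E_1^{ab}=H^b_\Zar(X_\bullet,\Omega^a_{X_\bullet})$ and the abutment $H^{a+b}_\dR(X)$ commute with the flat base changes $k_0\to k$ and $k_0\to\C$, as they are computed from a complex of coherent sheaves on a fixed $k_0$-scheme. Degeneration at $E_1$ is equivalent to the numerical identities $\sum_{a}\dim E_1^{a,i-a}=\dim H^i_\dR(X)$ for all $i$, and these are insensitive to field extension. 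Hence it suffices to establish degeneration over $\C$.

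Finally, over $\C$ the spectral sequence above is exactly the one Deligne employs in \cite{HodgeIII} to construct the mixed Hodge structure on the cohomology of the proper variety $X$; he proves there that it degenerates at $E_1$. The degeneration ultimately rests on classical Hodge theory for the smooth proper $X_n$ together with strictness of the differentials, which are morphisms of pure Hodge structures. I expect the main obstacle to be purely bookkeeping: checking that the spectral sequence arising from the Hodge filtration on $\Omega^*_\h$ coincides, term by term and with the correct filtration, with Deligne's Hodge to de Rham spectral sequence, so that his degeneration result applies verbatim. Given Propositions \ref{prop_deRham} and \ref{prop_hypercov}, this amounts to tracing through the definitions.
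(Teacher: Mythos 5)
Your proposal is correct and follows essentially the same route as the paper: pass to a smooth proper hypercover via the filtered version of Proposition \ref{prop_deRham}, reduce by coherent base change to $k=\C$, and invoke Deligne's degeneration theorem from \cite{HodgeIII}, with the second assertion being Deligne's definition of the Hodge filtration. The only cosmetic difference is that you spell out the reduction to $\C$ via the numerical criterion $\sum_a\dim E_1^{a,i-a}=\dim H^i_\dR(X)$, which the paper leaves implicit.
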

\begin{proof} Choose a proper hypercover $X_\bullet$ of $X$ with all $X_n$ smooth.
The comparison argument in the proof of Proposition \ref{prop_deRham} also works for all $F^p\Omega^*_\h$, hence we can replace $H^i_\h(X,\cdot)$ by
$H^i_\Zar(X_\bullet,\cdot)$ everywhere in the assertion.
Hence this is really a statement about Zariski-cohomology of differential forms. 

By the base change properties of differential forms and cohomology of coherent sheaves, the theorem is true for the ground field $k$ if and only if it is true for a field extension of $k$. 
The scheme $X$ and every $X_n$ is defined over a finitely generated extension of $\Q$. 
Hence we can assume without loss of generality that $k$ is generated over $\Q$ by countably many elements. Such fields can be embedded into $\C$, hence
it suffices to consider the case $k=\C$.

In this case, the assertion is proved by Deligne in \cite{HodgeIII}: by
definition 
\[ F^pH^i_\sing(X^\an,\C)=H^i_\Zar(X_\bullet,F^p\Omega^*)\]
and the spectral sequence attached to the Hodge filtration degenerates at $E_1$.
\end{proof}

\begin{cor}\label{cor_surjection} Let $k=\C$ and $X$ be proper. Then the natural map
\[ H^i_\sing(X^\an,\C)\to H^i_\h(X,\Omega^0_\h)\]
is surjective.
\end{cor}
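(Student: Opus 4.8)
The plan is to derive this surjectivity statement as a formal consequence of the Hodge filtration degeneration established in Theorem~\ref{thm_degenerates}, together with the period isomorphism. The key observation is that $\Omega^0_\h$ sits as the lowest graded piece of the Hodge filtration on the complex $\Omega^*_\h$. Specifically, the quotient $\Omega^*_\h/F^1\Omega^*_\h$ is precisely $\Omega^0_\h$ placed in degree zero, so there is a natural projection of complexes $\Omega^*_\h\to\Omega^0_\h$.

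First I would rewrite the target using the identifications already at hand. By the period isomorphism (the corollary preceding Theorem~\ref{thm_degenerates}) and Proposition~\ref{prop_deRham}, we have $H^i_\sing(X^\an,\C)\isom H^i_\h(X,\Omega^*_\h)$. Under this identification the map in question is induced by the projection of complexes $\Omega^*_\h\to\Omega^*_\h/F^1\Omega^*_\h=\Omega^0_\h$, that is, it is the edge map $H^i_\h(X,\Omega^*_\h)\to H^i_\h(X,\Omega^0_\h)$ coming from the Hodge filtration. Concretely, this is the natural map from the hypercohomology of the full de Rham complex onto the hypercohomology of its degree-zero piece $\Omega^0_\h = \Oh_\h$.

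Next I would invoke the degeneration from Theorem~\ref{thm_degenerates}. Degeneration at $E_1$ of the Hodge to de Rham spectral sequence $E_1^{ab}=H^b_\h(X,\Omega^a_\h)\Rightarrow H^{a+b}_\h(X,\Omega^*_\h)$ means exactly that the associated graded of the Hodge filtration on $H^i_\h(X,\Omega^*_\h)$ is $\bigoplus_a H^{i-a}_\h(X,\Omega^a_\h)$, with no further differentials killing any term. In particular the graded piece $\mathrm{gr}^0_F H^i_\h(X,\Omega^*_\h)=H^i_\h(X,\Omega^0_\h)$ appears as a genuine subquotient, and the edge map onto it is surjective. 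Equivalently, degeneration forces the map $H^i_\h(X,F^0\Omega^*_\h)=H^i_\h(X,\Omega^*_\h)\to H^i_\h(X,\Omega^0_\h)$ induced by the quotient $F^0/F^1$ to be surjective, since the connecting maps in the long exact sequences relating the $F^p$ all vanish.

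The main obstacle, such as it is, is purely bookkeeping: one must check that the map appearing in the statement of the corollary (the ``natural map'' from singular cohomology to $H^i_\h(X,\Omega^0_\h)$) is indeed the edge map of the spectral sequence under the period isomorphism, rather than some other morphism. This is a compatibility of identifications and not a substantive difficulty, since all the maps involved are the canonical ones induced by the inclusion $\Omega^0_\h\hookrightarrow\Omega^*_\h$ as the lowest filtration step and its splitting off as a quotient. Once this is verified, surjectivity is immediate from degeneration. Thus the proof is short: identify the map as the Hodge-filtration edge map via Proposition~\ref{prop_deRham} and the period isomorphism, then read off surjectivity from the $E_1$-degeneration in Theorem~\ref{thm_degenerates}.
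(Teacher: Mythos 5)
Your argument is correct and is exactly the paper's intended proof: the paper's one-line justification (``identify algebraic de Rham cohomology with singular cohomology'') relies, via its placement immediately after Theorem~\ref{thm_degenerates}, on precisely the edge-map-plus-$E_1$-degeneration reasoning you spell out. You have simply made explicit the bookkeeping the paper leaves implicit.
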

\begin{proof}In this case we can identify algebraic de Rham cohomology with
singular cohomology.
\end{proof}

\begin{rem} This generalizes the well-known consequence of the degeneration of
the Hodge to de Rham spectral sequence to the singular case. Note that
this is the starting point in the proof of well-known vanishing theorems
in birational geometry, as pointed out e.g. in \cite{Kol87} Part III and \cite{CKM88} Lecture 8.
\end{rem}

\subsection{The Du Bois complex}\label{sec_dubois}
In this subsection we point out how the Du Bois complex fits into the framework of $\h$-differential forms. The connection was first observed by Lee in \cite{Lee09}. 
In order to have a self-contained presentation, we explain the details in our language. We use the new language to reprove a number of results from the literature with very short and simple proofs.

Recall from  \cite{HodgeIII} the notion of a filtered derived category $D^+F\Ah$ of an abelian category $\Ah$. Objects are filtered complexes and morphisms are morphisms of filtered complexes localized with respect to filtered quasi-isomorphisms. We assume that all filtrations are {\em decreasing}.

Du Bois defines in \cite{DuB81} a filtered complex of sheaves on $X$ whose
hypercohomology is de Rham cohomology with the Hodge filtration. We review the construction. 

\begin{defn}Let $X$ be a variety,  $\pi:X_\bullet\to X$ be a proper hypercover with all $X_n$ smooth. Then
\[ \uOmega^*_X:=R\pi_*\Omega^*_{X_\bullet}\]
is the {\em Du Bois complex} of $X$. It is filtered by
\[ F^p\uOmega^*_X=R\pi_*F^p\Omega^*_{X_\bullet}\ .\]
We denote
\[ \uOmega^p_X=F^p\uOmega^*_X/F^{p+1}\uOmega^*_X[p]=R\pi_*\Omega^p_{X_\bullet}\]
its associated graded.
\end{defn}

\begin{rem}Du Bois assumes that $X$ is proper. This is needed if we want the induced filtration on hypercohomology to be the correct Hodge filtration.
\end{rem}

\begin{thm}\label{thm_h-is-dub}
Consider $\Omega^*_\h$ with the Hodge filtration as an object in $D^+F( \Sh(\Sch_\h))$ where $\Sh(\Sch_\h)$ is the category
of sheaves of abelian groups on $\Sch_\h$. Then
\[ R\rho_{X*}\Omega^*_\h\in D^+F(\Sh(X_\Zar))\]
is naturally isomorphic to the Du Bois complex $\uOmega^*_X$. In particular,
\[ R\rho_{X*}\Omega^p_\h\isom \uOmega^p_X\in D(\Sh(X)_\Zar)\ .\]
\end{thm}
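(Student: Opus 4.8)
The plan is to prove that $R\rho_{X*}\Omega^*_\h$, together with its Hodge filtration, is isomorphic to the Du Bois complex $\uOmega^*_X$, and then to read off the statement for the graded pieces. The strategy is to exploit the machinery already assembled in Section~\ref{sec_rho}, reducing everything to the smooth case via a proper hypercover and descent.

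First I would choose, by resolution of singularities, a proper $\h$-hypercover $\pi:X_\bullet\to X$ with every $X_n$ smooth. Since any $\h$-hypercover induces an isomorphism on $\h$-cohomology for all $\h$-sheaves, and since this persists after sheafifying in the Zariski topology, the natural adjunction map gives
\[ R\rho_{X*}\Omega^*_\h\isom R\pi_*\bigl(R\rho_{X_\bullet*}\Omega^*_\h\bigr). \]
Now on each smooth $X_n$, Proposition~\ref{prop_smoothdb} tells us that $R\rho_{X_n*}\Omega^p_\h=\Omega^p_{X_n}$ with no higher cohomology, so termwise $R\rho_{X_\bullet*}\Omega^*_\h=\Omega^*_{X_\bullet}$. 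Combining these identifications yields
\[ R\rho_{X*}\Omega^*_\h\isom R\pi_*\Omega^*_{X_\bullet}=\uOmega^*_X, \]
which is exactly the defining formula for the Du Bois complex. The key input here is that $\h$-sheafification commutes with the descent formalism, so that the cohomology computed by the hypercover in the $\h$-topology agrees, after applying $R\rho_{X*}$, with the Zariski-cohomology computation that defines $\uOmega^*_X$.

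To upgrade this to a \emph{filtered} isomorphism in $D^+F(\Sh(X_\Zar))$, I would repeat the argument with the stupid (Hodge) filtration in place of the full complex: the same comparison applies to each $F^p\Omega^*_\h$, since Proposition~\ref{prop_smoothdb} identifies each graded piece $\Omega^p_\h$ on smooth schemes with $\Omega^p_{X_n}$, and the filtration on $\Omega^*_\h$ is by definition the stupid filtration whose graded pieces are precisely the $\Omega^p_\h$. Thus $R\rho_{X*}F^p\Omega^*_\h\isom R\pi_*F^p\Omega^*_{X_\bullet}=F^p\uOmega^*_X$, compatibly across $p$, giving the filtered statement. Taking the $p$-th graded piece (i.e. $F^p/F^{p+1}$ shifted appropriately) then immediately yields $R\rho_{X*}\Omega^p_\h\isom\uOmega^p_X$, the final assertion.

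The main obstacle I anticipate is not any single computation but rather verifying that the whole construction is \emph{independent of the chosen hypercover} and genuinely natural, so that the isomorphism lives canonically in the filtered derived category rather than depending on auxiliary choices. The heart of this is the descent property of $\h$-hypercovers: one must check that the spectral sequence of Proposition~\ref{prop_hypercov} sheafifies correctly and that the filtered quasi-isomorphism class of $R\pi_*F^p\Omega^*_{X_\bullet}$ does not depend on $X_\bullet$. This is where the advantage of the $\h$-topology is decisive, since $R\rho_{X*}\Omega^*_\h$ is manifestly independent of any hypercover by construction, so the role of the hypercover is merely computational and the canonicity is automatic once the comparison in Corollary~\ref{prop_rhocohdim} is invoked.
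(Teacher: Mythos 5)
Your proposal is correct and follows essentially the same route as the paper: choose a proper hypercover $\pi:X_\bullet\to X$ with smooth terms, identify $R\rho_{X*}\Omega^p_\h$ with $R\pi_*\Omega^p_{X_\bullet}$ via Corollary~\ref{prop_rhocohdim} (itself resting on Proposition~\ref{prop_smoothdb} and descent), and assemble these identifications along the stupid filtration to get the filtered isomorphism with $\uOmega^*_X$. Your closing observation that canonicity is automatic because $R\rho_{X*}\Omega^*_\h$ is defined without reference to any hypercover is exactly the point the paper makes in the corollary immediately following the theorem.
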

\begin{rem}A variant of this result was proved by Lee, see \cite{Lee09} Theorem~4.13.
\end{rem}
\begin{proof}Choose a proper hypercover $\pi:X_\bullet\to X$ as in the
definition of $\uOmega^*_X$. We need to show that
\[ R\rho_{X*}\Omega^*_\h\isom R\pi_*\Omega^*_X\ .\]
Using the Hodge filtration on both sides, this follows from Corollary \ref{prop_rhocohdim}.
\end{proof}

\begin{cor}[(\cite{DuB81})]As object of the filtered derived category, $\uOmega^*_X$ is independent of the choice of hypercover $\pi:X_\bullet\to X$.
\end{cor}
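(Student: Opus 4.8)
The plan is to deduce the independence immediately from Theorem~\ref{thm_h-is-dub}, which does all the real work. The definition of $\uOmega^*_X$ begins with a choice of proper hypercover $\pi:X_\bullet\to X$ with all $X_n$ smooth, and the content of the corollary is that the resulting object of the filtered derived category does not depend on this choice. The observation that makes this trivial is that Theorem~\ref{thm_h-is-dub} compares $\uOmega^*_X$ with an object, namely $R\rho_{X*}\Omega^*_\h$, whose definition involves no hypercover at all.

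First I would record that for \emph{every} admissible hypercover $\pi:X_\bullet\to X$, Theorem~\ref{thm_h-is-dub} supplies a natural isomorphism
\[ \uOmega^*_X = R\pi_*\Omega^*_{X_\bullet}\,\isom\, R\rho_{X*}\Omega^*_\h \]
in $D^+F(\Sh(X_\Zar))$, compatible with the Hodge filtration on both sides. The right-hand side is built solely from the $\h$-sheaf $\Omega^*_\h$ together with its stupid filtration $F^*\Omega^*_\h$ and the fixed morphism of topoi $\rho_X:(\Sch_\h)^\sim\to X_\Zar^\sim$; in particular it is manifestly independent of the choice of $\pi$.

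Next, given two proper hypercovers $\pi:X_\bullet\to X$ and $\pi':X'_\bullet\to X$ with smooth terms, I would compose the two instances of this isomorphism to obtain a filtered isomorphism
\[ R\pi_*\Omega^*_{X_\bullet}\,\isom\, R\rho_{X*}\Omega^*_\h\,\isom\, R\pi'_*\Omega^*_{X'_\bullet} \]
in $D^+F(\Sh(X_\Zar))$, which is the desired identification of the two Du Bois complexes. Since the intermediate object is canonical, the composite identification is itself canonical, so the construction is not merely independent of the choice up to abstract isomorphism but through a preferred isomorphism.

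There is essentially no obstacle here beyond bookkeeping: the only point one must be careful about is that the isomorphism of Theorem~\ref{thm_h-is-dub} is an isomorphism in the \emph{filtered} derived category, so that the induced identification respects the Hodge filtrations $F^p\uOmega^*_X$ and hence also their graded pieces $\uOmega^p_X$. This compatibility is built into the statement of Theorem~\ref{thm_h-is-dub} (whose proof reduces, via Corollary~\ref{prop_rhocohdim}, to the filtered statement $R\rho_{X*}\Omega^*_\h\isom R\pi_*\Omega^*_{X_\bullet}$), so nothing further needs to be verified.
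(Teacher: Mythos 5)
Your proposal is correct and is essentially the paper's own argument: the paper's entire proof reads ``This is true for $R\rho_{X*}\Omega^*_\h$,'' i.e.\ it invokes Theorem~\ref{thm_h-is-dub} to identify $\uOmega^*_X$ with the hypercover-free filtered object $R\rho_{X*}\Omega^*_\h$, exactly as you do. Your version merely spells out the composition of the two instances of the comparison isomorphism, which is a harmless elaboration of the same idea.
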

\begin{proof}This is true for $R\rho_{X*}\Omega^*_\h$.\end{proof}
\begin{rem}After this identification, the statements in Section \ref{sec_rho} 
can be understood as statements on the graded pieces of the Du Bois complex.
\end{rem}

\begin{cor}[(\cite{GNPP88} Proposition III.1.17)]
The Du Bois complex of $X$ is a complex of coherent sheaves with $k$-linear coboundary maps concentrated in degrees at most $2\dim X$.
\end{cor}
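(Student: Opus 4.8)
The plan is to read off all three assertions from the structure of the graded pieces $\uOmega^p_X = R\rho_{X*}\Omega^p_\h$ recorded in Corollary~\ref{prop_rhocohdim}, using the identification $\uOmega^*_X \isom R\rho_{X*}\Omega^*_\h$ of filtered objects from Theorem~\ref{thm_h-is-dub}. None of the three points requires new input beyond this machinery.

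First I would dispose of the $k$-linearity, which is immediate: the coboundary maps of $\uOmega^*_X = R\pi_*\Omega^*_{X_\bullet}$ are induced by the exterior differential $d\colon\Omega^p_{X_\bullet}\to\Omega^{p+1}_{X_\bullet}$, which is $k$-linear (a derivation, hence not $\Oh_X$-linear).

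For the degree bound I would work with the Hodge filtration $F^\bullet\uOmega^*_X$. By definition of $\uOmega^p_X$ the graded piece is $F^p\uOmega^*_X/F^{p+1}\uOmega^*_X \isom \uOmega^p_X[-p] = R\rho_{X*}\Omega^p_\h[-p]$. By Corollary~\ref{prop_rhocohdim}(1) the complex $R\rho_{X*}\Omega^p_\h$ is concentrated in degrees $0,\dots,\dim X$, so after the shift the graded piece lives in degrees $p,\dots,p+\dim X$. By Corollary~\ref{prop_rhocohdim}(2) (equivalently, $\Omega^p_\h=0$ for $p>\dim X$) the filtration is finite with $F^{\dim X+1}=0$, so only $p=0,\dots,\dim X$ contribute. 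The largest possible cohomological degree is therefore $p+\dim X\le 2\dim X$, attained only for $p=\dim X$. Coherence follows the same way: each graded piece $R\rho_{X*}\Omega^p_\h$ has coherent cohomology sheaves by Corollary~\ref{prop_rhocohdim}(3), and since the filtration is finite, the long exact sequences attached to $0\to F^{p+1}\to F^p\to F^p/F^{p+1}\to 0$, together with the stability of coherent sheaves under kernels, cokernels and extensions, show inductively that every $\mathcal{H}^i(\uOmega^*_X)$ is coherent.

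I expect no genuine obstacle. The only point demanding care is the bookkeeping of the shift $[-p]$ in the Hodge filtration: it is precisely this shift, combined with the individual bound $\dim X$ on each $R\rho_{X*}\Omega^p_\h$, that produces the sharp bound $2\dim X$ rather than $\dim X$.
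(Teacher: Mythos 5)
Your proof is correct and follows essentially the same route as the paper, which simply invokes Corollary~\ref{prop_rhocohdim} together with the Hodge spectral sequence; you have merely spelled out the bookkeeping of the shift $[-p]$ and the finiteness of the filtration that the paper leaves implicit. The added observation on $k$-linearity of the differential is also fine.
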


\begin{proof}
Use Proposition \ref{prop_rhocohdim} and the Hodge spectral sequence.
\end{proof}

 Recall that in \cite{S83} (3.5) Steenbrink defined a variety to have \emph{Du Bois singularities} if the canonical map from $\Oh_X$ to the zeroth graded piece of the Du Bois complex is a quasi-isomorphism. These singularities have already been studied by Du Bois in \cite{DuB81} Section 4.

\begin{cor} A variety $X$ has Du Bois singularities if and only if $\Oh_X\to R\rho_{X*}\Omega^0_\h$ is a quasi-isomorphism, i.e, if
\[ R^i\rho_{X*}\Omega_\h^0=\begin{cases}\Oh_X&i=0\\
                                        0&i>0.
			  \end{cases}\]
In particular, if $X$ has Du Bois singularities, then
\[ H^i_\h(X,\Omega^0_\h)=H^i_\Zar(X,\Oh_X)\ .\]
\end{cor}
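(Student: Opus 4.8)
The plan is to prove the corollary directly from the definition of Du Bois singularities together with the identification supplied by Theorem~\ref{thm_h-is-dub}. Recall that $X$ has Du Bois singularities precisely when the canonical map $\Oh_X\to\uOmega^0_X$ is a quasi-isomorphism, where $\uOmega^0_X$ is the zeroth graded piece of the Du Bois complex. By Theorem~\ref{thm_h-is-dub} we have a natural isomorphism $R\rho_{X*}\Omega^0_\h\isom\uOmega^0_X$ in the derived category $D(\Sh(X_\Zar))$, and under this isomorphism the canonical map $\Oh_X\to\uOmega^0_X$ is identified with the adjunction map $\Oh_X\to R\rho_{X*}\Omega^0_\h$. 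Thus the first equivalence is essentially a translation: the map $\Oh_X\to\uOmega^0_X$ is a quasi-isomorphism if and only if $\Oh_X\to R\rho_{X*}\Omega^0_\h$ is.

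First I would record that $\Omega^0_\h=\Oh_\h$, so that the object under consideration is $R\rho_{X*}\Oh_\h$, and note that its degree-$i$ cohomology sheaf is exactly $R^i\rho_{X*}\Omega^0_\h$. Saying that $\Oh_X\to R\rho_{X*}\Omega^0_\h$ is a quasi-isomorphism is then the same as saying that the cohomology sheaves vanish in positive degrees and that the adjunction induces an isomorphism $\Oh_X\isom R^0\rho_{X*}\Omega^0_\h$ in degree zero; this is precisely the displayed case distinction. Hence the biconditional and the explicit formula for $R^i\rho_{X*}\Omega^0_\h$ are two formulations of the same statement and require no further argument beyond invoking Theorem~\ref{thm_h-is-dub}.

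For the final ``In particular'' clause, I would pass from the sheaf-level statement to global cohomology. Assuming $X$ has Du Bois singularities, the quasi-isomorphism $\Oh_X\isom R\rho_{X*}\Omega^0_\h$ yields, upon taking hypercohomology on $X_\Zar$, an isomorphism $H^i_\Zar(X,\Oh_X)\isom H^i_\Zar(X,R\rho_{X*}\Omega^0_\h)$. The right-hand side computes $\h$-cohomology: by the Grothendieck spectral sequence for the composite $\Gamma(X,\cdot)\circ R\rho_{X*}$, or simply because $\rho_X$ is the morphism of sites realizing $H^i_\h(X,\Omega^0_\h)=H^i_\Zar(X,R\rho_{X*}\Omega^0_\h)$, we obtain $H^i_\h(X,\Omega^0_\h)=H^i_\Zar(X,\Oh_X)$, as claimed.

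I do not expect any serious obstacle here, since all the substantive work has already been done in Theorem~\ref{thm_h-is-dub} and in the construction of $\rho_X$. The only point demanding a little care is the compatibility of the canonical Du Bois map with the derived-category adjunction map $\Oh_X\to R\rho_{X*}\Omega^0_\h$; this is built into the naturality asserted in Theorem~\ref{thm_h-is-dub}, so one need only remark that the two maps agree. The passage from the sheaf-level quasi-isomorphism to the cohomology identity is then a formal consequence of the Leray-type spectral sequence associated with $\rho_X$.
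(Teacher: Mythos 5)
Your proposal is correct and follows essentially the same route as the paper, which states this corollary without a separate proof precisely because it is an immediate translation of Steenbrink's definition through the identification $R\rho_{X*}\Omega^0_\h\isom\uOmega^0_X$ of Theorem~\ref{thm_h-is-dub}, with the cohomological consequence obtained from the Leray spectral sequence for $\rho_X$. Your remark about checking that the canonical Du Bois map matches the adjunction map under this identification is the right point of care, and it is indeed covered by the naturality in Theorem~\ref{thm_h-is-dub}.
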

\begin{rem} By Proposition \ref{prop_sn}, we have $\rho_{X*}\Omega^0_\h=\Oh_X$ if $X$ is semi-normal. More generally the argument shows that zeroth cohomology of $\uOmega^0_X$ is isomorphic to $\pi_*\Oh_{X^{\text{sn}}}$ where
$\pi:X^\text{sn}\to X$ is the semi-normalization.
This reproves  a result established by Saito  \cite{Sai00} Proposition 5.2 or Schwede \cite{Sch09} Lemma 5.6.
\end{rem}

\begin{ex}[(\cite{DuB81} Exemples 4.7)] 
Normal crossing schemes are Du Bois by Corollary \ref{cor_dnc} for $p=0$.
\end{ex}

Using our approach we can also explain Schwede's criterion. 
\begin{cor}[(\cite{Sch07})]\label{cor_schwede}
Let $X\subset Y$ be a scheme embedded in some smooth scheme $Y$ and let $\pi:\tilde{Y}\to Y$ be a log resolution of $X$ in $Y$, i.e., the exceptional set $E=Exc(\pi)_{red}=\pi^{-1}(X)_{red}$ is a snc divisor. Then $X$ has Du Bois singularities if and only if the canonical map $\Oh_X\to R\pi_*\Oh_E$ is an isomorphism.
\end{cor}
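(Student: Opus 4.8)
The plan is to reduce Schwede's criterion to a statement about the zeroth graded piece of the Du Bois complex, which by Theorem~\ref{thm_h-is-dub} is $R\rho_{X*}\Omega^0_\h$. By the preceding corollary, $X$ has Du Bois singularities if and only if the adjunction map $\Oh_X\to R\rho_{X*}\Omega^0_\h$ is a quasi-isomorphism. So the whole task is to identify $R\rho_{X*}\Omega^0_\h$ with $R\pi_*\Oh_E$ in a way compatible with the map from $\Oh_X$.

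First I would set up the relevant abstract blow-up. Since $\pi:\tilde Y\to Y$ is a log resolution of $X$ inside the smooth scheme $Y$, with reduced exceptional set $E=\pi^{-1}(X)_{\red}$ an snc divisor, the square $(\tilde Y\to Y,\, X\to Y)$ with $E=\tilde Y\times_Y X$ (up to reduction) is an abstract blow-up: $\pi$ is proper and an isomorphism away from $X$. I would then apply the blow-up triangle of Proposition~\ref{prop_blowuptriangle} in degree $p=0$, which yields a distinguished triangle
\[ R\rho_{Y*}\Omega^0_\h\to R\pi_*R\rho_{\tilde Y*}\Omega^0_\h\oplus i_* R\rho_{X*}\Omega^0_\h\to i_*R\pi'_{E*}R\rho_{E*}\Omega^0_\h\xrightarrow{[1]}, \]
where $i:X\to Y$ is the inclusion and $\pi':E\to X$ the restriction of $\pi$. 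Now $Y$ and $\tilde Y$ are smooth, so Proposition~\ref{prop_smoothdb} gives $R\rho_{Y*}\Omega^0_\h=\Oh_Y$ and $R\rho_{\tilde Y*}\Omega^0_\h=\Oh_{\tilde Y}$. Since $Y$ is smooth and hence has rational singularities, Proposition~\ref{prop_rational} shows $\Oh_Y\to R\pi_*\Oh_{\tilde Y}$ is a quasi-isomorphism. Feeding this into the triangle, the first two terms become identified and cancel, leaving a quasi-isomorphism
\[ i_* R\rho_{X*}\Omega^0_\h\xrightarrow{\isom} i_*R\pi'_{E*}R\rho_{E*}\Omega^0_\h. \]
Since $E$ is an snc divisor, it is a normal crossing scheme, so Corollary~\ref{cor_dnc} (for $p=0$) gives $R\rho_{E*}\Omega^0_\h=\Omega^0_\h|_E=\Oh_E/\torsion=\Oh_E$ (as $E$ is reduced). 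Thus $R\rho_{X*}\Omega^0_\h\isom R\pi'_{E*}\Oh_E=R\pi_*\Oh_E$, and tracking the adjunction maps shows this identification carries $\Oh_X\to R\rho_{X*}\Omega^0_\h$ to $\Oh_X\to R\pi_*\Oh_E$. Combining with the Du Bois criterion from the previous corollary finishes the proof.

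The main obstacle I anticipate is the careful bookkeeping of the canonical maps, rather than any single hard computation. Specifically, one must check that the cancellation of the smooth terms in the blow-up triangle is compatible with the natural map $\Oh_X\to R\rho_{X*}\Omega^0_\h$, so that the resulting isomorphism $R\rho_{X*}\Omega^0_\h\isom R\pi_*\Oh_E$ really does intertwine the two adjunction maps $\Oh_X\to R\rho_{X*}\Omega^0_\h$ and $\Oh_X\to R\pi_*\Oh_E$. This is a diagram-chase in the derived category, and the key input making it work is the vanishing coming from $\Oh_Y\xrightarrow{\isom}R\pi_*\Oh_{\tilde Y}$ (i.e.\ rationality of the smooth $Y$), which is what allows one to isolate the $X$-component of the triangle. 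A secondary point worth stating explicitly is the identification $E=\tilde Y\times_Y X$ up to reducedness, so that the restriction $\pi'$ of $\pi$ to $E$ is exactly the map $\pi^{-1}(X)_{\red}\to X$ appearing in Schwede's statement.
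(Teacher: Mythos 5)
Your proposal is correct and follows essentially the same route as the paper: apply the blow-up triangle for $(\tilde Y\to Y, X\to Y)$ with $p=0$, identify the smooth terms via Proposition~\ref{prop_smoothdb} and the quasi-isomorphism $\Oh_Y\to R\pi_*\Oh_{\tilde Y}$, use Corollary~\ref{cor_dnc} to replace $R\rho_{E*}\Omega^0_\h$ by $\Oh_E$, and conclude $R\rho_{X*}\Omega^0_\h\isom R\pi_*\Oh_E$. You are somewhat more explicit than the paper about the cancellation and the compatibility of the adjunction maps, but the argument is the same.
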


\begin{proof}
By Proposition \ref{prop_smoothdb}, Corollary \ref{cor_dnc} and $R\pi_*\Oh_{\tilde{Y}}=\Oh_Y$ (see Remark~\ref{prop_rational})
the blow-up triangle \ref{prop_blowuptriangle} for $\Oh_\h$ and $(\tilde{Y}\to Y, X\xrightarrow{i} Y)$ can be written as
\[\Oh_Y \to \Oh_Y\oplus i_*R\rho_{X*}\Oh_\h \to R\pi_*\Oh_E \xrightarrow{\text{+1}}  \]
which shows the claim.
\end{proof}


\subsection{Relative de Rham cohomology}\label{ssec_rel-de-rham}
Let $X$ be a variety and $Z\to X$ a closed subvariety. Our aim is to
describe de Rham cohomology of $X$ relative to $Z$ by h-differentials.

As mentioned in the introduction,
relative de Rham cohomology is needed in a generalization of the definition of the period isomorphism and the period numbers of a variety,
e.g. in
the work of Kontsevich and Zagier on periods \cite{KoZ02}, see also \cite{HubMSt11}. 
We are not aware of a reference in the literature even though the existence
of such a theory was clear to the experts.

\begin{defn}Let $X$ be in $\Sch$.
Let $(\Sch/X)_\h$ be the category of schemes of finite type over $X$ equipped
with the restriction of the h-topology. 

Let $f:X\to Y$ be in $\Sch$. We denote
\[ f^*:(\Sch/Y)_\h^\sim\to (\Sch/X)_\h^\sim\]
the restriction functor and by
\[ f_*:(\Sch/X)_\h^\sim\to (\Sch/Y)_\h^\sim\]
its right adjoint.
\end{defn}

We will use frequently the following fact.

\begin{lemma}Let $\pi:X\to\Spec\, k$ be the structural map. For any sheaf $\Fh$ of abelian groups on $\Sch_\h=(\Sch/\Spec\, k)_\h$, we have
\[ H^i_\h(X,\Fh)=H^i((\Sch/X)_\h,\pi^*\Fh)\ .\]
\end{lemma}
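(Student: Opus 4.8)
The plan is to identify both sides as derived functors computing the same thing, using the general comparison between cohomology on a site and cohomology on a localized (slice) site. The statement asserts that the global $\h$-cohomology $H^i_\h(X,\Fh)$, defined as the derived functors of $\Gamma(X,\cdot)$ on the big site $\Sch_\h$, agrees with the cohomology $H^i((\Sch/X)_\h,\pi^*\Fh)$ computed on the small slice site over $X$ with the restricted sheaf $\pi^*\Fh$. Both are derived functors of a global-sections functor evaluated on sheaves obtained from $\Fh$, so the core task is to compare the two global-sections functors and their derived versions.

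First I would recall that there is a morphism of sites (equivalently of topoi) relating $(\Sch/X)_\h$ to $\Sch_\h$, induced by the forgetful functor sending $(T\to X)$ to $T$. The pullback functor $\pi^*$ on sheaves is exactly restriction along this forgetful functor, and it is exact since it is simply composition with a continuous functor preserving covers. The key formula to establish is that the global sections of $\pi^*\Fh$ over the terminal object $\id_X$ of $(\Sch/X)_\h$ compute $\Fh(X)$; that is, $\Gamma((\Sch/X)_\h,\pi^*\Fh)=\pi^*\Fh(\id_X)=\Fh(X)=\Gamma(X,\Fh)$. This matches the two functors on the underlying level of global sections.

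Next I would pass to derived functors. The clean way is to observe that $\pi^*$ preserves injectives (because its left adjoint $\pi_!$, the left Kan extension / ''extension by the empty cover'' functor, is exact), so an injective resolution $\Fh\to \mathcal I^\bullet$ on $\Sch_\h$ pulls back to an injective resolution $\pi^*\Fh\to\pi^*\mathcal I^\bullet$ on $(\Sch/X)_\h$. Applying $\Gamma(X,\cdot)$ to the first and $\Gamma((\Sch/X)_\h,\cdot)$ to the second gives literally the same complex of abelian groups by the global-sections identity above, so their cohomologies agree. Alternatively, one invokes the standard fact (e.g. in the spirit of the Čech-to-derived-functor comparison used in Proposition~\ref{prop_cohoehh}) that for a site and an object $X$, the cohomology of a sheaf restricted to the localized site $\Sch_\h/X$ computes the cohomology groups $\Ext^i(\Z_\h(X),\Fh)=H^i_\h(X,\Fh)$.

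The main obstacle I expect is the verification that $\pi^*$ is exact and preserves injectives, which hinges on the continuity of the forgetful functor $(\Sch/X)_\h\to\Sch_\h$ and on the existence of the exact left adjoint $\pi_!$. Since every $\h$-cover of an object $T\to X$ comes from an $\h$-cover of $T$ and vice versa, the forgetful functor is cocontinuous as well as continuous, so this is a formal consequence of topos theory rather than anything specific to differential forms; but it must be stated carefully because the slice topology is defined as the \emph{restriction} of the $\h$-topology, and one needs that covers in $(\Sch/X)_\h$ are precisely the families covering in $\Sch_\h$. Once this formal input is in place, the identification of global sections and the injective-resolution argument finish the proof with no further computation.
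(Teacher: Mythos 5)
Your proof is correct and is essentially the paper's own approach: the paper disposes of this lemma by citing Milne, \emph{\'Etale cohomology}, Lemma III.1.11, and your argument (restriction along the localization morphism is exact, preserves injectives because the exact left adjoint $\pi_!$ exists, and global sections over the terminal object $\id_X$ of $(\Sch/X)_\h$ recover $\Gamma(X,\cdot)$) is precisely the standard proof of that cited lemma. The one point you flag as a potential obstacle --- that covers in $(\Sch/X)_\h$ are exactly the families that cover in $\Sch_\h$ --- is immediate here, since the paper defines the slice site by restricting the $\h$-topology.
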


\begin{proof}
This is \cite{Milne} Lemma III.1.11.
\end{proof}

\begin{defn}
We denote by $\Omega^p_\Xhtop$ the restriction of $\Omega^p_\h$ to $(\Sch/X)_\h$.
Equivalently, $\Omega^p_\Xhtop$ is the $\h$-sheafification of the presheaf $\Omega^p$ on $\Sch/X$.
\end{defn}

\begin{defn}Let $X\in \Sch$ and $i:Z\to X$ a closed subscheme. Put
\[ \Omega^p_{\basehtop{(X,Z)}}=\Ker(\Omega^p_\Xhtop\to i_*\Omega^p_\Zhtop)\]
in the category of abelian sheaves on $(\Sch/X)_\h$.

We define
{\em relative algebraic de Rham cohomology} as
\[ H^p_\dR(X,Z)=H^p_\h(X,\Omega^*_{\basehtop{(X,Z)}})\ .\]
\end{defn}

\begin{rem}
Relative algebraic de Rham cohomology could alternatively be defined as part 
of the Hodge structure on relative singular cohomology 
or as de Rham realization of appropriate geometric motives (see \cite{Hreal} and \cite{Hreal2}, or \cite{LW09}). The above agrees with these definitions. We refrain from giving the details of the comparison.

Establishing standard properties of relative algebraic de Rham cohomology in
terms of hyperresolutions is very difficult. Indeed, the standard argument would be to compare the situation with singular cohomology, where the proofs are straight-forward. Our approach via the $\h$-topology allows  us to give these
proofs directly.
\end{rem}

\begin{lemma}\label{lemma_relative}Let $i:Z\to X$ be a closed immersion. 
\begin{enumerate}
\item
Then
\[ Ri_*\Omega^p_\Zhtop=i_*\Omega^p_\Zhtop\]
and hence
\[ H^q_\h(X,i_*\Omega^p_\Zhtop)=H^q_\h(Z,\Omega^p_\h)\ .\]
\item The natural map of sheaves of abelian groups on $(\Sch/X)_\h$
\[ \Omega^p_\Xhtop\to i_*\Omega^p_\Zhtop\]
is surjective.
\end{enumerate} 
\end{lemma}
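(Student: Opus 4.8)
The plan is to reduce both parts, after passing to an $\h$-cover of the base, to the situation of a normal crossing divisor sitting inside a smooth scheme, where the computations of Proposition~\ref{prop_h-dnc} and Corollary~\ref{cor_dnc} become available. For part~(1), I would first record the formula $(i_*\mathcal G)(T)=\mathcal G(Z\times_X T)$, which comes from the identity $i^*h_{T/X}=h_{Z\times_X T/Z}$ on representables. Since the restriction functor $i^*$ is exact, its right adjoint $i_*$ carries injectives to injectives, and taking $T=X$ gives $\Gamma(X,i_*\mathcal G)=\mathcal G(Z)$. Hence the Grothendieck spectral sequence $H^a_\h(X,R^b i_*\Omega^p_\Zhtop)\Rightarrow H^{a+b}_\h(Z,\Omega^p_\h)$ (the target being computed on $(\Sch/Z)_\h$ via \cite{Milne} Lemma III.1.11) shows that the entire statement of part~(1) follows once we know $R^b i_*\Omega^p_\Zhtop=0$ for $b>0$, which is precisely the assertion $Ri_*=i_*$.

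To prove that vanishing I would use that $R^b i_*\Omega^p_\Zhtop$ is the $\h$-sheafification on $(\Sch/X)_\h$ of the presheaf $T\mapsto H^b_\h(Z\times_X T,\Omega^p_\h)$, so it suffices to show that any class in $H^b_\h(Z\times_X T,\Omega^p_\h)$ with $b>0$ dies after an $\h$-cover of $T$. First replace $T$ by a smooth $\h$-cover via Construction~\ref{constr_eh-locally-smooth}, so that $Z_T:=Z\times_X T$ is a closed subscheme of a smooth scheme. Applying embedded resolution of singularities, I would choose a proper birational $T'\to T$, a composition of blow-ups in smooth centres and hence an $\h$-cover, such that $E:=(Z\times_X T')_{\red}$ is a normal crossing divisor in the smooth scheme $T'$. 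Because $\h$-cohomology is insensitive to the reduction (Corollary~\ref{lem_sncoh}) and, for $E$ normal crossing, $H^b_\h(E,\Omega^p_\h)=H^b_\Zar(E,\Omega^p_E/\torsion)$ by Corollary~\ref{cor_dnc}, the pulled-back class becomes the cohomology of a coherent sheaf; it therefore vanishes on any affine open, and covering $T'$ by the affines of a Zariski cover kills it. This establishes $R^b i_*\Omega^p_\Zhtop=0$ for $b>0$ and hence both displayed formulas in part~(1).

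For part~(2) I would check surjectivity of $\Omega^p_\Xhtop\to i_*\Omega^p_\Zhtop$ $\h$-locally. Given a section $s\in(i_*\Omega^p_\Zhtop)(T)=\Omega^p_\h(Z_T)$, pass again to a smooth $\h$-cover of $T$ and run the same embedded resolution $T'\to T$, producing the normal crossing divisor $E=(Z\times_X T')_{\red}\subset T'$. Pulling $s$ back identifies it with a section of $\Omega^p_E/\torsion$ by Proposition~\ref{prop_h-dnc}, and it then remains to lift this, Zariski-locally on the smooth $T'$, along the restriction map $\Omega^p_{T'}\to\Omega^p_E/\torsion$. Writing $E$ as a union of coordinate hyperplanes, this is the same explicit local computation with defining equations carried out in the proof of Proposition~\ref{prop_h-dnc}: restriction of a Kähler $p$-form to a normal crossing divisor is locally surjective. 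The affine opens of $T'$, composed with $T'\to T$, then form an $\h$-cover over which $s$ lifts, which is exactly surjectivity.

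The main obstacle is the vanishing $R^b i_*\Omega^p_\Zhtop=0$. On the big $\h$-site the pushforward along a closed immersion is \emph{not} exact for formal reasons, since an arbitrary $\h$-cover of $Z_T$ need not be refined by one induced from $T$ (resolving the singularities of $Z_T$ is not achieved by covering $T$ alone). The point of the argument is that embedded resolution lets us trade an uncontrolled $\h$-cover of $Z_T$ for a proper cover of the smooth ambient $T$, after which sn-invariance and the normal crossing computation convert every positive-degree class into coherent cohomology that dies on an affine. The remaining, more computational, difficulty is the local surjectivity of the restriction map onto a normal crossing divisor needed in part~(2).
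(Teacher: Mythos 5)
Your proposal is correct and follows essentially the same route as the paper: sheafify the presheaf $T\mapsto H^b_\h(Z\times_XT,\Omega^p_\h)$, use embedded resolution to reduce to a normal crossing divisor inside a smooth affine, and then invoke Corollary~\ref{cor_dnc} (vanishing of coherent cohomology on affines) for part~(1) and Proposition~\ref{prop_h-dnc} together with surjectivity of restriction of K\"ahler forms for part~(2); your extra justification of the second formula in~(1) via exactness of $i^*$ and the Leray spectral sequence is a detail the paper leaves implicit. The only point you should add is the degenerate case where a component of $T'$ maps entirely into $Z$, so that $(Z\times_XT')_{\red}$ is all of $T'$ rather than a divisor; the paper notes this explicitly, and it is handled trivially since $T'$ is smooth affine there.
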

\begin{proof}The higher direct image
$R^qi_*\Omega^p_\Zhtop$ is the $\h$-sheafification in $(\Sch/X)_\h$ of
\[ (f:U\to X)\mapsto H^q_\h(f^{-1}Z,\Omega^p_\h)\] 
We write $g:U'\to U\to X$ where $U'\to U$ is a resolution such that locally on $U'$ the preimage $g^{-1}Z\subset U'$ is either a simple normal crossing divisor or $g^{-1}Z=U'$. In a second step, we cover $U'$ by open affines. Hence it suffices to show that
\[ H^q_\h(Z',\Omega^p_\h)=0 \hspace{3ex}q>0\]
for $Z'$ an affine scheme with normal crossings. By Corollary \ref{cor_dnc}
it is equal to Zariski cohomology of a coherent sheaf, hence zero.

For surjectivity, it suffices by the same reduction to show that
\[ \Omega^p(U')\to\Omega^p_\h(Z')\]
is surjective for $U'$ smooth affine and $Z'\subset U'$ a divisor with normal
crossings. Note that this is true for K\"ahler differentials. By Proposition
\ref{prop_h-dnc}
$\Omega^p_\h|_{Z'}$ is a quotient of $\Omega^p_Z$ in the Zariski topology.
\end{proof}

\begin{prop}[(Long exact sequence)]
Let $Z\subset Y\subset X$ be a closed immersion. Then there is a natural long exact sequence
\[\dots \to H^q_\dR(X,Y)\to H^q_\dR(X,Z)\to H^q_\dR(Y,Z)\to H^{q+1}_\dR(X,Y)\to\cdots\]
\end{prop}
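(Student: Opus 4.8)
The plan is to deduce the long exact sequence from a short exact sequence of complexes of $\h$-sheaves on $(\Sch/X)_\h$, followed by the hypercohomology long exact sequence. Concretely, for the chain of closed immersions $Z\subset Y\subset X$, write $i:Z\to X$, $j:Y\to X$ and $i':Z\to Y$ for the inclusions, so that $i = j\circ i'$. The relative de Rham complexes are defined degreewise as kernels, $\Omega^p_{\basehtop{(X,Y)}}=\Ker(\Omega^p_\Xhtop\to j_*\Omega^p_\Yhtop)$ and $\Omega^p_{\basehtop{(X,Z)}}=\Ker(\Omega^p_\Xhtop\to i_*\Omega^p_\Zhtop)$, and $\Omega^p_{\basehtop{(Y,Z)}}=\Ker(\Omega^p_\Yhtop\to i'_*\Omega^p_\Zhtop)$ as a sheaf on $(\Sch/Y)_\h$.

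First I would establish the short exact sequence of complexes of $\h$-sheaves on $(\Sch/X)_\h$
\[ 0\to \Omega^*_{\basehtop{(X,Y)}}\to\Omega^*_{\basehtop{(X,Z)}}\to j_*\Omega^*_{\basehtop{(Y,Z)}}\to 0. \]
Exactness is checked degreewise. The inclusion on the left is clear since any section killed by restriction to $Y$ is in particular killed by restriction to $Z$. The map on the right is induced by the restriction $\Omega^p_\Xhtop\to j_*\Omega^p_\Yhtop$, which carries the kernel of restriction to $Z$ into the kernel of the further restriction to $Z$ inside $\Omega^p_\Yhtop$; the key point is to identify the image and the cokernel. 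Left-exactness at the middle term follows directly from the definitions as kernels. The essential input is surjectivity on the right, together with the identification of $\Omega^*_{\basehtop{(X,Y)}}$ as the kernel. Both of these I would extract from Lemma~\ref{lemma_relative}(2): the surjectivity of $\Omega^p_\Xhtop\to j_*\Omega^p_\Yhtop$ shows that the restriction map is an epimorphism of $\h$-sheaves, and a diagram chase comparing the two kernel sequences then yields exactness of the displayed sequence. One must be slightly careful that $j_*$ of the kernel $\Omega^p_{\basehtop{(Y,Z)}}$ on $(\Sch/Y)_\h$ is computed correctly; since $j$ is a closed immersion, Lemma~\ref{lemma_relative}(1) guarantees $Rj_*=j_*$ is exact on these sheaves, so pushing forward preserves the relevant exactness.

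Having the short exact sequence of complexes, I would apply $H^*_\h(X,-)$ to obtain the hypercohomology long exact sequence. For the two outer terms this gives $H^q_\dR(X,Y)$ and $H^q_\dR(X,Z)$ directly by definition. For the third term I need $H^q_\h(X,j_*\Omega^*_{\basehtop{(Y,Z)}})=H^q_\dR(Y,Z)$, which follows from Lemma~\ref{lemma_relative}(1): because $j$ is a closed immersion, $Rj_*=j_*$, so $H^q_\h(X,j_*\Omega^*_{\basehtop{(Y,Z)}})=H^q_\h(Y,\Omega^*_{\basehtop{(Y,Z)}})=H^q_\dR(Y,Z)$. Assembling these identifications into the long exact sequence yields the claimed statement, with the connecting maps $H^q_\dR(Y,Z)\to H^{q+1}_\dR(X,Y)$ supplied by the boundary of the short exact sequence.

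I expect the main obstacle to be the verification of the short exact sequence of complexes, specifically the surjectivity of the right-hand map and the correct identification of the kernel as $\Omega^*_{\basehtop{(X,Y)}}$. This is where one genuinely needs the $\h$-local structure of normal crossings from Proposition~\ref{prop_h-dnc} and the surjectivity statement in Lemma~\ref{lemma_relative}(2); the remaining steps are formal consequences of the hypercohomology long exact sequence and the exactness of $j_*$ for closed immersions.
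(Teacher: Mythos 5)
Your proposal follows essentially the same route as the paper: the degreewise short exact sequence $0\to \Omega^*_{\basehtop{(X,Y)}}\to\Omega^*_{\basehtop{(X,Z)}}\to j_*\Omega^*_{\basehtop{(Y,Z)}}\to 0$ extracted from Lemma~\ref{lemma_relative} (the paper gets it by a snake-lemma chase on the two rows $0\to \Omega^p_{\basehtop{(X,\bullet)}}\to\Omega^p_\Xhtop\to \bullet_*\Omega^p_{\basehtop{\bullet}}\to 0$), followed by the hypercohomology long exact sequence and the identification of the third term via $Rj_*=j_*$. The one step you state too quickly is the claim that Lemma~\ref{lemma_relative}(1) directly gives $Rj_*\Omega^p_{\basehtop{(Y,Z)}}=j_*\Omega^p_{\basehtop{(Y,Z)}}$: that lemma only treats the full sheaves $\Omega^p_\Yhtop$ and $\Omega^p_\Zhtop$, not the relative kernel, and for the latter one needs the short extra argument the paper makes explicit, namely the triangle $Rj_*\Omega^p_{\basehtop{(Y,Z)}}\to j_*\Omega^p_\Yhtop\to i_*\Omega^p_\Zhtop$ together with the surjectivity of its second arrow, which holds because $\Omega^p_\Xhtop$ surjects onto both targets by Lemma~\ref{lemma_relative}(2).
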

\begin{proof} We compute in the category of abelian sheaves on $(\Sch/X)_\h$. 
Let $i_Y:Y\to X$, $i_Z:Z\to X$.
By definition, there
is a commutative diagram
\[\begin{CD}
 0@>>>\Omega^p_{\basehtop{(X,Y)}}@>>>\Omega^p_\Xhtop @>>>i_{Y*}\Omega^p_\Yhtop@>>> 0\\
@.@VVV@VVV@VVV\\
 0@>>>\Omega^p_{\basehtop{(X,Z)}}@>>>\Omega^p_\Xhtop @>>>i_{Z*}\Omega^p_\Zhtop@>>> 0
 \end{CD}\]
 The functor $i_{Y*}$ is left exact, hence we have
 \[ i_{Y*}\Omega^p_{\basehtop{(Y,Z)}}=\Ker (i_{Y*}\Omega^p_\Yhtop\to i_{Z*}\Omega^p_\Zhtop)\ .\]
 By the snake lemma this implies that we have a short exact sequence
 \[ 0\to \Omega^p_{\basehtop{(X,Y)}}\to \Omega^p_{\basehtop{(X,Z)}}\to i_{Y*}\Omega_{\basehtop{(Y,Z)}}\to 0\ .\]
The long exact sequence for relative cohomology is the long exact cohomology sequence attached to it, provided we establish
\[ Ri_{Y*}\Omega^p_{\basehtop{(Y,Z)}}=i_{Y*}\Omega^p_{\basehtop{(Y,Z)}}\ .\]
Using Lemma \ref{lemma_relative} we obtain a natural triangle
\[ Ri_{Y*}\Omega^p_{\basehtop{(Y,Z)}}\to i_{Y*}\Omega^p_\Yhtop\to i_{Z*}\Omega^p_\Zhtop\ .\]
By considering the long exact cohomology sequence on $(\Sch/X)_\h$, it suffices
to show surjectivity of
\[ i_{Y*}\Omega^p_\Yhtop\to i_{Z*}\Omega^p_\Zhtop\ .\]
This is true because $\Omega^p_\Xhtop$ surjects to both by Lemma \ref{lemma_relative}.
\end{proof}

\begin{prop}[(Excision)]\label{prop_exc}
Let $\tilde{X}\to X$ be a proper morphism, which is an isomorphism outside
of $Z\subset X$. Let $\tilde{Z}=\pi^{-1}Z$. Then 
\[ H^q_\dR(\tilde{X},\tilde{Z})\isom H^q_\dR(X,Z)\ .\]
\end{prop}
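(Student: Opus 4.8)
The key observation is that the hypotheses exhibit $(\tilde X,\tilde Z)$ as an abstract blow-up of $X$ with center $Z$: the morphism $\pi:\tilde X\to X$ is proper and restricts to an isomorphism over $X\smallsetminus Z$, and $\tilde Z=\pi^{-1}(Z)=\tilde X\times_X Z$, so that $(\pi:\tilde X\to X,\ i:Z\to X)$ is an abstract blow-up with exceptional fibre $E=\tilde Z$. Hence the blow-up triangle of Proposition~\ref{prop_blowuptriangle} is at our disposal, and the plan is to read off excision from it, working throughout in $D^+(\Sh(X_\Zar))$.

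First I would record the relative complex as a fibre. The defining short exact sequence
\[ 0\to\Omega^*_{\basehtop{(X,Z)}}\to\Omega^*_\Xhtop\to i_*\Omega^*_\Zhtop\to 0 \]
of complexes of $\h$-sheaves on $(\Sch/X)_\h$ is exact by Lemma~\ref{lemma_relative}, which also gives $Ri_*\Omega^p_\Zhtop=i_*\Omega^p_\Zhtop$. Applying $R\rho_{X*}$ therefore yields a distinguished triangle
\[ R\rho_{X*}\Omega^*_{\basehtop{(X,Z)}}\to \uOmega^*_X\to i_*\uOmega^*_Z\xrightarrow{[1]}, \]
so that $R\rho_{X*}\Omega^*_{\basehtop{(X,Z)}}$ is the fibre of the restriction map $\uOmega^*_X\to i_*\uOmega^*_Z$, and its Zariski hypercohomology on $X$ computes $H^*_\dR(X,Z)$. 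The identical discussion for the pair $(\tilde X,\tilde Z)$, pushed forward along $\pi$ and using $R\pi_*\tilde\imath_*=i_*R(\pi|_{\tilde Z})_*$, exhibits $R\pi_*R\rho_{\tilde X*}\Omega^*_{\basehtop{(\tilde X,\tilde Z)}}$ as the fibre of $R\pi_*\uOmega^*_{\tilde X}\to i_*R(\pi|_{\tilde Z})_*\uOmega^*_{\tilde Z}$, with hypercohomology $H^*_\dR(\tilde X,\tilde Z)$.

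Next I would apply Proposition~\ref{prop_blowuptriangle} to each $\Omega^p_\h$ and, using that the differentials $d:\Omega^p_\h\to\Omega^{p+1}_\h$ are morphisms of $\h$-sheaves, assemble the resulting triangles into a single distinguished triangle for the whole de Rham complex. This triangle says precisely that the commutative square
\[\begin{CD}
 \uOmega^*_X @>>> R\pi_*\uOmega^*_{\tilde X}\\
 @VVV @VVV\\
 i_*\uOmega^*_Z @>>> i_*R(\pi|_{\tilde Z})_*\uOmega^*_{\tilde Z}
 \end{CD}\]
is homotopy cartesian. In a triangulated category a homotopy cartesian square has isomorphic fibres along parallel edges; comparing the two vertical maps with the previous step gives
\[ R\rho_{X*}\Omega^*_{\basehtop{(X,Z)}}\isom R\pi_*R\rho_{\tilde X*}\Omega^*_{\basehtop{(\tilde X,\tilde Z)}}, \]
and taking Zariski hypercohomology on $X$ yields the desired isomorphism $H^q_\dR(X,Z)\isom H^q_\dR(\tilde X,\tilde Z)$.

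I expect the only real difficulty to be bookkeeping rather than conceptual. One must promote the blow-up triangle of Proposition~\ref{prop_blowuptriangle}, stated for a single $\h$-sheaf, to the filtered complex $\Omega^*_\h$ compatibly with $d$, and verify that the resulting square commutes with the correct signs so that it is genuinely homotopy cartesian. One must also be careful that the relative complexes of the two pairs, which a priori live on the different sites $(\Sch/X)_\h$ and $(\Sch/\tilde X)_\h$, are compared correctly; systematically pushing forward to $X$ via $R\rho_{X*}$ and $R\pi_*$, as above, removes this ambiguity. Everything else is a formal consequence of the triangulated formalism already in place in Section~\ref{sec_rho}.
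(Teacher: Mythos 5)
Your proposal is correct and follows exactly the route the paper takes: the paper's entire proof is ``This follows immediately from the blow-up triangle,'' and your argument is a careful unpacking of that one line, identifying $(\tilde X\to X, Z\to X)$ as an abstract blow-up with $E=\tilde Z$, reading the blow-up triangle for $\Omega^*_\h$ as a homotopy cartesian square, and comparing the fibres of the two vertical restriction maps, which are the two relative de Rham complexes. The bookkeeping points you flag (assembling the triangles of Proposition~\ref{prop_blowuptriangle} compatibly with $d$, and using Lemma~\ref{lemma_relative} together with $R\pi_*\tilde\imath_*=i_*R(\pi|_{\tilde Z})_*$ to place both relative complexes over $X$) are exactly the details the paper leaves implicit, and they go through as you describe.
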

\begin{proof}This follows immediately from the blow-up triangle.
\end{proof}
\begin{prop}[K\"unneth formula]\label{prop_kuenneth}
Let $Z\subset X$ and $Z'\subset X'$ be closed immersions. Then there is
a natural isomorphism
\[ H^n_\dR(X\times X',X\times Z'\cup Z\times X')=\bigoplus_{a+b=n}H^a_\dR(X,Z)\tensor_k H^b_\dR(X',Z')\ .\]
\end{prop}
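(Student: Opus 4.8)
\end{prop}

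\begin{proof}[Plan of proof]
The plan is to identify relative de Rham cohomology with the homotopy fibre of a restriction map, and then to reduce the K\"unneth formula to the classical K\"unneth theorem over the field $k$, which is exact on the level of complexes of $k$-vector spaces. Throughout I abbreviate $R\Gamma(T)=R\Gamma_\h(T,\Omega^*_\h)$ (computing $H^*_\dR(T)$) and $R\Gamma(T,S)=R\Gamma_\h(T,\Omega^*_{\basehtop{(T,S)}})$ (computing $H^*_\dR(T,S)$). The first step is to record the basic triangle: by Lemma~\ref{lemma_relative} the sequence of $\h$-sheaves $0\to\Omega^*_{\basehtop{(X,Z)}}\to\Omega^*_\Xhtop\to i_*\Omega^*_\Zhtop\to 0$ is exact, and $Ri_*\Omega^*_\Zhtop=i_*\Omega^*_\Zhtop$. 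Together with Proposition~\ref{prop_deRham} this exhibits, functorially in the pair, an isomorphism $R\Gamma(X,Z)\isom\cone\bigl(R\Gamma(X)\to R\Gamma(Z)\bigr)[-1]$; that is, relative de Rham cohomology is the cohomology of the homotopy fibre of restriction.

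Next I would establish the \emph{absolute} K\"unneth isomorphism $R\Gamma(X\times X')\isom R\Gamma(X)\otimes_k R\Gamma(X')$ for arbitrary $X,X'$. Choosing proper hypercovers $X_\bullet\to X$ and $X'_\bullet\to X'$ with smooth terms (Proposition~\ref{prop_hypercov}), the product $X_\bullet\times X'_\bullet\to X\times X'$ is again such a hypercover, so both sides are computed by Zariski hypercohomology of the de Rham complex on smooth simplicial schemes. On the smooth terms one has the external product decomposition $\Omega^p_{X_n\times X'_m}=\bigoplus_{a+b=p}\Omega^a_{X_n}\boxtimes\Omega^b_{X'_m}$, and the classical K\"unneth theorem applies. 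Since after taking $R\Gamma$ everything is a complex of $k$-vector spaces and $\otimes_k$ is exact, there is no Tor-correction. I would apply this to each of the four products $X\times X'$, $X\times Z'$, $Z\times X'$, $Z\times Z'$.

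Now I would analyse $W=(X\times Z')\cup(Z\times X')$. The two closed immersions present $W$ as an abstract blow-up with centre $Z\times X'$, complement $X\times Z'$, and exceptional locus $(X\times Z')\times_W(Z\times X')=Z\times Z'$; hence the blow-up triangle (Propositions~\ref{prop_blowup-cohom} and~\ref{prop_blowuptriangle}) applied to $\Omega^*_\h$ identifies $R\Gamma(W)$ with the homotopy fibre product of $R\Gamma(X\times Z')$ and $R\Gamma(Z\times X')$ over $R\Gamma(Z\times Z')$. Writing $\alpha\colon R\Gamma(X)\to R\Gamma(Z)$ and $\beta\colon R\Gamma(X')\to R\Gamma(Z')$ for restriction, the absolute K\"unneth formulas from the previous step turn the four vertices $R\Gamma(X\times X')$, $R\Gamma(X\times Z')$, $R\Gamma(Z\times X')$, $R\Gamma(Z\times Z')$, together with their restriction maps, into precisely the commutative square of external tensor products $\alpha\otimes_k\beta$. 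By the first step, $R\Gamma(X\times X',W)$ is the homotopy fibre of $R\Gamma(X\times X')\to R\Gamma(W)$, i.e.\ the \emph{total} homotopy fibre of this square. The elementary homological identity that the total fibre of the external tensor square of two maps of complexes of $k$-vector spaces is the tensor product of their fibres — valid on the nose because $\otimes_k$ is exact — then gives
\[ R\Gamma(X\times X',W)\isom\cone(\alpha)[-1]\otimes_k\cone(\beta)[-1]=R\Gamma(X,Z)\otimes_k R\Gamma(X',Z'). \]
Taking cohomology and using the K\"unneth formula for complexes of $k$-vector spaces yields $H^n_\dR(X\times X',W)=\bigoplus_{a+b=n}H^a_\dR(X,Z)\otimes_k H^b_\dR(X',Z')$.

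The main obstacle is the bookkeeping in the third step: verifying at the level of $\h$-sheaves that $X\times Z'$ and $Z\times X'$ organise $W$ into an abstract blow-up with exceptional locus $Z\times Z'$ (so that the blow-up triangle applies), and then matching the resulting homotopy limit diagram with the external tensor square $\alpha\otimes_k\beta$ so that the total-fibre identity can be invoked. Both points rest on the exactness of $\otimes_k$ over the field $k$, which is exactly what lets the K\"unneth formula hold without correction terms.
\end{proof}
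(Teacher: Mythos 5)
Your argument is correct in substance but organized quite differently from the paper's. The paper first \emph{constructs} the K\"unneth morphism canonically, by $\h$-sheafifying the exterior multiplication $\text{pr}_X^*\Omega^a_{\basehtop{X}}\tensor_k\text{pr}_{X'}^*\Omega^b_{\basehtop{X'}}\to\Omega^{a+b}_{\basehtop{X\times X'}}$ and observing that it respects the relative subsheaves; it then proves bijectivity by a double d\'evissage: compatibility with the long exact sequences of triples (plus excision) reduces to $Z=Z'=\emptyset$, and compatibility with blow-up sequences reduces to $X,X'$ smooth, where the statement is the classical K\"unneth formula for Zariski cohomology of vector bundles. You instead work at the level of complexes: relative cohomology as a homotopy fibre, $R\Gamma(W)$ as a homotopy fibre product via the blow-up triangle for the abstract blow-up $(X\times Z'\to W,\ Z\times X'\to W)$ with exceptional locus $Z\times Z'$ (this identification is correct), an absolute K\"unneth quasi-isomorphism via products of smooth proper hypercovers, and finally the exactness of $\tensor_k$ to identify the total fibre of the square with the tensor product of the fibres of $\alpha$ and $\beta$. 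What the paper's route buys is that naturality is manifest and only up-to-sign commutativity is ever needed, since the five lemma tolerates it; what your route buys is the avoidance of sign bookkeeping and a stronger, chain-level statement $R\Gamma(X\times X',W)\isom R\Gamma(X,Z)\tensor_k R\Gamma(X',Z')$. The price is the point you yourself flag as the main obstacle: cones and total fibres are not functorial in the derived category, so the identification of the square of restriction maps with the external tensor square of $\alpha$ and $\beta$ must be made strictly at the chain level --- for instance by choosing the four hypercovers compatibly (hypercovers of $Z_\bullet\to X_\bullet$ over $Z\to X$, likewise for $Z'\subset X'$, then taking products) and using the explicit external product of de Rham complexes together with the Eilenberg--Zilber map for the simplicial direction. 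Once that is carried out your argument closes, and the resulting isomorphism agrees with the paper's because both are induced by the same multiplication of differential forms; naturality of your isomorphism, which the proposition asserts, is then inherited from that description rather than being automatic from the fibre manipulations.
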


\begin{proof}We work  in the category of $\h$-sheaves of $k$-vector spaces on $X\times X'$. Note that $\h$-cohomology of an $\h$-sheaf of $k$-vector spaces computed 
in the category of sheaves of abelian groups agrees with its $\h$-cohomology 
computed in the category of sheaves of $k$-vector spaces because an injective sheaf of $k$-vector spaces is also injective as sheaf of abelian groups.

We abbreviate $T=X\times Z'\cup Z\times X'$.
By $\h$-sheafification of the product of K\"ahler differentials we have a natural multiplication
\[ \text{pr}_X^*\Omega^a_\basehtop{X}\tensor_k\text{pr}_{X'}^*\Omega^b_\basehtop{X'}\to  \Omega^{a+b}_\basehtop{X\times X'}\ .\]
It induces, with $i_Z:Z\to X$, $i_{Z'}:Z'\to X'$, and $i:T\to X\times X'$
\[ 
   \text{pr}_X^*\Ker(\Omega^a _\basehtop{X}\to i_{Z*}\Omega^a_\basehtop{Z})\tensor_k\text{pr}_{X'}^*\Ker(\Omega^b_\basehtop{X'}\to i_{Z'*}\Omega^b_\basehtop{Z'})\to \Ker( \Omega^{a+b}_\basehtop{X\times X'}\to i_*\Omega^{a+b}_\basehtop{T})\ .\]
 The resulting morphism
 \[ \text{pr}_X^*\Omega^*_{\basehtop{(X,Z)}}\tensor_k \text{pr}_{X'}^*\Omega^*_{\basehtop{(X',Z')}}\to\Omega^*_{\basehtop{(X\times X',T)}}\ . \]
induces a natural K\"unneth morphism
\[ \bigoplus_{a+b=n}H^a_\dR(X,Z)\tensor_k H^b_\dR(X',Z')\to H^n_\dR(X\times X',T)\ . \]
It remains to show that it is an isomorphism. One can easily show that the K\"unneth morphism is compatible with long exact sequences of pairs of spaces in both arguments, possibly up to sign. E.g., for the second argument the diagrams 
\[\begin{CD}
H^a_\dR(X,Z)\otimes H^b_\dR(X',Z')@.\,\to\,@. H^a_\dR(X,Z)\otimes H^b_\dR(X') @.\,\to\,@. H^a_\dR(X,Z)\otimes H^b_\dR(Z') @.\,\to\cdots \\
@VVV@.@VVV@.@VVV\\
 H^{a+b}_\dR(X\times X',T)@.\,\to\,@.H^{a+b}_\dR(X\times X',Z\times X')  @.\,\to\, @. H^{a+b}_\dR(X\times Z',Z\times Z') @. \,\to\cdots
 \end{CD}\]
commute (possibly up to sign), where the second row is the long exact sequence associated with $Z\times X'\subset T\subset X\times X'$  up to the excision isomorphism
\[H^{a+b}_\dR(T,X\times Z')=H^{a+b}_\dR(X\times Z',Z\times Z').\]
By this and a similar consideration for the first argument we reduce to the case $Z'=Z=\emptyset$.

In the second step we use that the K\"unneth morphism is compatible with long exact sequences for abstract blow-ups, again in both arguments. Hence it suffices to show the isomorphism 
\[ \bigoplus_{a+b=n}H^a_\dR(X)\tensor_k H^b(X')\to H^n_\dR(X\times X') \ . \]
for $X$ and $X'$ smooth. In this case we can compute in the Zariski topology. The isomorphism is well-known. It follows from the K\"unneth formula for Zariski cohomology of vector bundles.
\end{proof}

A special case of relative cohomology is cohomology with compact support.
\begin{defn}Let $X\in\Sch$ and $j:X\to\bar{X}$ a compactification. Then
\[ H^q_{\dR,c}(X)=H^q_\dR(\bar{X},X)\]
is called {\em algebraic de Rham cohomology with compact support}.
\end{defn}
In the setting of the eh-topology this is precisely \cite{Gei06} Section 4.1.
By excision, the definition is independent of the choice of compactification.

\end{document}